\def\U{{\mathcal U}}
\def\az{\alpha}
\def\dist{{\mathop\mathrm{\,dist\,}}}
\def\ez{\epsilon}
\newcommand{\average}{{\mathchoice {\kern1ex\vcenter{\hrule height.4pt
width 6pt depth0pt} \kern-9.7pt} {\kern1ex\vcenter{\hrule
height.4pt width 4.3pt depth0pt} \kern-7pt} {} {} }}
\newcommand{\ave}{\average\int}
\def\bint{{\ifinner\rlap{\bf\kern.35em--}
\int\else\rlap{\bf\kern.45em--}\int\fi}\ignorespaces}
\def\bbint{{\ifinner\rlap{\bf\kern.35em--}
\hspace{0.078cm}\int\else\rlap{\bf\kern.45em--}\int\fi}\ignorespaces}
\def\diam{{\mathop\mathrm{\,diam\,}}}
\newcommand{\R}{\mathbb{R}}
\newtheorem{thm}{Theorem}[section]
\newtheorem{lem}[thm]{Lemma}%[section]     %@@!!@@!!
\newtheorem{prop}[thm]{Proposition}%[section]    %@@!!@@!!
\newtheorem{defn}[thm]{Definition}%[section]    %@@!!@@!!
\numberwithin{equation}{section}
\theoremstyle{remark}
\newtheorem{rem}[thm]{Remark}%[section]    %@@!!@@!!
\def\bint{{\ifinner\rlap{\bf\kern.35em--}
\int\else\rlap{\bf\kern.45em--}\int\fi}\ignorespaces}
\title[Finite Morse index solutions to supercritical elliptic PDEs]{Uniform boundedness for
finite Morse index solutions\\ to supercritical semilinear elliptic  equations}
\author{Alessio Figalli, Yi Ru-Ya Zhang}
\date{\today}
\address{ETH Z\"urich, Department of Mathematics, R\"amistrasse 101, 8092, Z\"urich, Switzerland}
\email{alessio.figalli@math.ethz.ch}  
\address{Academy of Mathematics and Systems Science, the Chinese Academy of Sciences, Beijing 100190, China}
\email{yzhang@amss.ac.cn}
 \thanks{Both authors have received funding from the European Research Council under the Grant Agreement No. 
721675 ``Regularity and Stability in Partial Differential Equations (RSPDE)''. The second author is also partially funded by the Chinese Academy of Science and NSFC grant No. 11688101}
\subjclass[2000]{35J61, 35B65, 35B32, 35B35}
\keywords{Supercritical semilinear elliptic equations, finite Morse index, boundedness of solutions, Gelfand problem.}
\begin{document}
%%%%%%%%%%%%%%%%%%%%%%%%%%%%%%%%%%%%%%%%%%%%%%%%%%%%%%%%%%%%%%%%%%%%%

\begin{abstract}
We consider finite Morse index solutions to semilinear elliptic questions, and we investigate their smoothness.
It is well-known that:\\
- For $n=2$, there exist Morse index $1$ solutions whose $L^\infty$ norm goes to infinity.\\
- For $n \geq 3$, uniform boundedness holds in the subcritical case for power-type nonlinearities, while for critical nonlinearities the boundedness of the Morse index does not prevent blow-up in $L^\infty$.\\
In this paper, we investigate the case of general supercritical nonlinearities inside convex domains, and we prove an interior a priori $L^\infty$ bound for finite Morse index solution in the sharp dimensional range $3\leq n\leq 9$.
As a corollary, we obtain uniform bounds for finite Morse index solutions to the Gelfand problem constructed via the continuity method.
\end{abstract}

%%%%%%%%%%%%%%%%%%%%%%%%%%%%%%%%%%%%%%%%%%%%%%%%%%%%%%%%%%%%%%%%%%%%%

\maketitle

\section{Introduction}
Given $\Omega \subset \R^n$ a bounded domain, and $f:\R\to \R$ a nonnegative $C^1$ function, we consider 
a solution  $u:\Omega \to \R$ to the following semilinear equation
\begin{equation}\label{equ}
\left\{
\begin{array}{ll}
-\Delta u= f(u) &\text{ in } \Omega,\\
 u=0 &\text{ on } \partial\Omega.
 \end{array}
 \right.
\end{equation}
Note that, by the nonnegativity of $f$ and the maximum principle, $u> 0$ inside $\Omega$ (unless $u \equiv 0$).

Set $F(t):=\int_0^tf(s)\,ds$. Then \eqref{equ} corresponds to the Euler-Lagrange equation for the energy functional
$$
\mathcal E[u]:=\int_{\Omega}\Bigl(\frac{|\nabla u|^2}2-F(u)\Bigr)\,dx.
$$
Consider the second variation of $\mathcal E$, that is,
$$
\frac{d^2}{d\epsilon^2}\Big|_{\epsilon=0}\mathcal E[u+\epsilon\xi]
=\int_{\Omega}\Bigl(|\nabla \xi |^2-f'(u)\xi^2\Bigr)\,dx.
$$
Given a subdomain $\Omega'\subseteq \Omega$ and $k \in \mathbb N$,
$u$ is said to have {\em finite Morse index} $k$ in $\Omega'$, and we write \ ${\rm ind}(u,\Omega')=k$, if $k$ is the maximal dimension of a subspace $X_k\subset C^1_c(\Omega')$ such that, for any $\xi\in X_k\setminus \{0\}$,
$$
Q_u(\xi) :=\int_{\Omega'}\Big(|\nabla\xi|^2 -f'(u)\xi^2\Big)\, dx<0.
$$
Also, $u$ is said to be {\em stable} in $\Omega'$ if ${\rm ind}(u,\Omega')=0$ (that is, $Q_u(\xi)\geq 0$ for all $\xi \in C^1_c(\Omega')$).

\subsection{Finite Morse index vs uniform boundedness}
The idea of using a bound on the Morse index to characterize the uniform boundedness of a solution to a semilinear elliptic equation was first introduced in the seminal paper \cite{BL1992}.
In this work, as well in several other subsequent papers (see for instance \cite{Y1998,HRS1998}),
the authors considered  (variants of) the subcritical case, namely 
$$f(t) \simeq (\alpha+ t)^{\frac {n+2}{n-2}-\ez},\qquad \alpha, \ez>0,$$
and they proved that the boundedness of solutions is equivalent to the boundedness  of the Morse index.\footnote{Note however that this result does not cover the full subcritical case: as shown in \cite[Lemma 5 and Theorem 1(iii))]{JL1972}, for $f_\lambda(t)=\lambda (1+t)^p$, $\lambda>0$ and $p < \frac{n+2}{n-2}$, there exists a family $u_\lambda$ of solutions with Morse index $1$ with $\|u_\lambda\|_{L^\infty(B_1)}\to \infty$ as $\lambda \to 0$. In other words, in the subcritical case, both upper and lower bounds are needed on $f$ in order to show the equivalence between boundedness in $L^\infty$ and boundedness of the Morse index.}

In the critical case, namely
$$f(u)=(\alpha+u)^{\frac{n+2}{n-2}}, \qquad \alpha>0,$$
the finiteness of the Morse index does not imply the boundedness of the solutions. Indeed it is not difficult to check that the functions
\begin{equation}\label{critical case}
u_{\alpha,\mu}(x)=\biggl(\bigg(\frac {\mu \sqrt{n(n-2)}}{\mu^2+|x|^2}\bigg)^{\frac{n-2}{2}}-\alpha\biggr)_+, \ \mu>0
\end{equation}
are solutions with Morse index $1$. In particular, choosing $\alpha_\mu:=\left(\frac {\mu \sqrt{n(n-2)}}{1+\mu^2}\right)^{\frac{n-2}{2}}$ so that $u_{\alpha_\mu,\mu}=0$ on $\partial B_1$, and letting $\mu\to 0$, one can construct a family Morse index $1$ solutions in $B_1$ whose $L^\infty$ norm goes to infinity (see, e.g., \cite{CGS1989}).

The supercritical case, instead, is much less understood. The special case where $f$ is a polynomial or an exponential function has been studied in \cite{F2007} and \cite{DF2009}, respectively. There, the uniform boundedness of solutions is obtained by proving suitable Liouville-type results. 
Unfortunately, this approach does not seem suitable for general nonlinearities.

We finally mention a recent result \cite{DF2021}, where the authors investigate the regularity and symmetry properties of finite Morse index solutions.

\subsection{Main result: finite Morse index solutions with supercritical nonlinearities are uniformly bounded}
Very recently, in  \cite{CFRS2020} the authors investigated the properties of stable solutions
for {\it all} nonlinearities, and they proved uniform boundedness when $3 \leq n \leq 9$,\footnote{
In the stable case, the case $n=2$ is a consequence of the case $n=3$ by noticing that a stable solution in two dimensions is also stable in three dimensions (by looking at it as a function constant in the third variable). This is why the results in \cite{CFRS2020} hold for $n\leq 9$. This is not the case anymore when ${\rm ind}(u)>0$, and indeed a change of behavior of finite Morse index solutions between dimension $n=2$ and dimension $3 \leq n \leq 9$ was already observed in \cite{JL1972}. In particular, as \cite[Fig. 1 b, pag. 245]{JL1972} shows, there exists a curve of two-dimensional solutions with Morse index $1$ that blows-up in $L^\infty$.} and interior $W^{1,2}$ estimates in all dimensions.

In this paper, we exploit these result to develop a series of new tools for finite Morse index solutions (cf. Section~\ref{sect:pf main thm} below) that allow us to prove a universal $L^\infty$ bound for solutions to  \eqref{equ} when $f$ grows superlinearly in a suitably quantified way.\footnote{Our quantitative superlinarity assumption \eqref{F and f} already appeared in the paper \cite{S2000} (see also \cite{DS2010}), where the author proved the uniqueness of solutions to \eqref{eq:f l} for small values of $\lambda$.}  As common in these problems, we assume that $f(0)>0$ (actually, we quantify this assumption by asking that $f(0)\geq c_0>0$, so to better emphasize the dependences in our $L^\infty$ bound). This assumption is particularly natural in the superlinear case, since the Derrick-Pohozaev identity prevents the existence of nontrivial solutions (see \cite[Theorem 1, Page 515]{E2010}).

Actually, because of applications to the Gelfand problem described in Section~\ref{sect:Gelfand} below, it will be convenient to prove a more robust result that establishes a uniform bound whenever the nonlinearity is of the form $\lambda f$, where $\lambda \in [0,\hat{\lambda}]$ for some fixed $\hat\lambda$.
Also, for the sake of generality, it is interesting to observe how the bound depends on $f$. So, instead of considering a fixed nonlinearity $f$, we assume that $f$ belongs to a locally compact $C^1$ family. As shown in Section ~\ref{sect:convex} below,
this assumption can be considerably weakened if $f$ is assumed to be convex.

\begin{thm}\label{mainthm 1}
Let $3\le n\le 9$,  $\Omega\subset \R^n$ a bounded convex domain, and $c_0>0$. Consider
$$
\mathcal K\subset \{h \in C^1(\R):\,h \geq 0,\,h' \geq 0, h(0)\geq c_0\},
$$
and assume that $\mathcal K$ is compact for the $C^1_{\rm loc}(\R)$ topology.
Let $\hat{\lambda} >0$, and let $u \in C^2(\Omega)$ solve 
\begin{equation}
\label{eq:f l}
\left\{
\begin{array}{ll}
-\Delta u= \lambda f(u) &\text{ in } \Omega,\\
 u=0 &\text{ on } \partial\Omega,
 \end{array}
 \right.
\end{equation}
 for some $f \in \mathcal K$ and $\lambda \in [0,\hat{\lambda} ]$.
 Finally, assume that ${\rm ind}(u,\Omega)\leq k$ and that there exist  $\ez,t_0>0$ such that 
\begin{equation}\label{F and f}
f(t)t \geq \left (\frac{2n}{n-2}+ \ez  \right)F(t) \quad \text{ for all }  \  t \geq t_0,
\end{equation}
where $F(t):=\int_0^t f(s)\,ds$.
Then
$$\|u\|_{L^\infty(\Omega)}\leq C(n,k,\mathcal K,\hat{\lambda} ,\ez,t_0,\Omega).$$
\end{thm}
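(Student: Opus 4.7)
The plan is to combine three ingredients: a Pohozaev integral bound from the convexity of $\Omega$, a localization of the Morse index giving stability outside at most $k$ exceptional balls, and the stable $L^\infty$ estimate of \cite{CFRS2020} (available precisely for $3\le n\le 9$); a final blow-up/Liouville step closes the argument.

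Since $\Omega$ is convex, pick $x_0\in\Omega$ with $(x-x_0)\cdot\nu\ge c_\Omega>0$ on $\partial\Omega$. The Pohozaev identity for \eqref{eq:f l} reads
$$\lambda\int_\Omega\bigl[2nF(u)-(n-2)\,uf(u)\bigr]\,dx = \int_{\partial\Omega}|\partial_\nu u|^2(x-x_0)\cdot\nu\,dS \ge 0.$$
Feeding \eqref{F and f} into the integrand yields $2nF(u)-(n-2)uf(u)\le -(n-2)\ez F(u)$ for $u\ge t_0$ (with a $\mathcal K$-controlled error on $\{u\le t_0\}$), and rearranging produces a $\lambda$-uniform bound $\int_\Omega F(u)\,dx\le C$. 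Integrating \eqref{F and f} also gives $F(t)\ge c\,t^{2n/(n-2)+\ez}$ for large $t$, so $u$ is uniformly bounded in $L^{2n/(n-2)+\ez}(\Omega)$. The Morse-index hypothesis next yields, via a min-max argument on disjoint supports, points $x_1,\ldots,x_k\in\Omega$ and radii $r_i$ with $u$ stable on $\Omega\setminus\bigcup_i B_{r_i}(x_i)$; the interior $L^\infty$-from-$L^1$ stability estimate of \cite{CFRS2020} then upgrades the integral bound to a uniform pointwise bound on $u$ away from slightly enlarged bad balls.

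To rule out blow-up inside the bad balls, argue by contradiction: let $(u_m,f_m,\lambda_m)$ satisfy the hypotheses with $M_m:=\|u_m\|_{L^\infty}=u_m(x_m^*)\to\infty$, where $x_m^*$ must lie in a bad ball. Extract $f_m\to f_\infty$ in $C^1_{\mathrm{loc}}(\R)$ and $\lambda_m\to\lambda_\infty$. Rescale at the bubble scale $\delta_m^2:=M_m/(\lambda_m f_m(M_m))\to 0$ and set $v_m(y):= u_m(x_m^*+\delta_m y)/M_m$, so $v_m(0)=\max v_m=1$ and
$$-\Delta v_m=g_m(v_m),\qquad g_m(s):=f_m(M_m s)/f_m(M_m)\in[0,1]\text{ for }s\in[0,1].$$
Elliptic regularity extracts $v_m\to v_\infty\ge 0$ in $C^{1,\alpha}_{\mathrm{loc}}(\R^n)$ with $v_\infty(0)=1$, solving a limiting equation with monotone $g_\infty$. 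A direct rescaling computation shows the finite Morse index passes to $v_\infty$, so together with the stability-outside-$k$-balls property $v_\infty$ is stable outside a compact subset of $\R^n$. Since $3\le n\le 9$, applying the stable $L^\infty$-estimate of \cite{CFRS2020} on exhausting balls, together with the superlinearity inherited by $g_\infty$ from \eqref{F and f}, forces $v_\infty\equiv 0$ outside a compact set---contradicting $v_\infty(0)=1$.

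The hardest step is the blow-up: because $\delta_m\to 0$ and $g_m$ probes $f_m$ at arguments $M_m s\to\infty$, $C^1_{\mathrm{loc}}$-compactness of $\mathcal K$ gives no direct control on $g_m$. One must use \eqref{F and f} applied to each $f_m$ to obtain uniform-in-$m$ quantitative superlinearity, extract a nontrivial monotone limit $g_\infty$ still enjoying a form of \eqref{F and f}, and verify that the Morse-index information and the Pohozaev bound both transfer to $v_\infty$ strongly enough to preclude the nontrivial bubble.
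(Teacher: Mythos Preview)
Your proposal has a genuine gap in the blow-up step, which you yourself flag as ``the hardest step'' without actually resolving.

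After rescaling, the nonlinearities $g_m(s)=f_m(M_ms)/f_m(M_m)$ probe $f_m$ at arguments tending to infinity, so the $C^1_{\rm loc}$-compactness of $\mathcal K$ gives no information whatsoever about the behaviour of $g_m$. For generic $f_m$ (think $f(t)=e^t$, where $g_m(s)=e^{M_m(s-1)}$) the sequence $g_m$ does not converge to anything reasonable on $[0,1]$, and there is no mechanism to extract a ``nontrivial monotone limit $g_\infty$ still enjoying a form of \eqref{F and f}''. Even granting such a limit, your Liouville argument fails: applying the stable estimate of \cite{CFRS2020} on $B_R$ with $v_\infty\le 1$ only gives the trivial bound $\|v_\infty\|_{L^\infty(B_R)}\le C$; it does not force decay. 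A Liouville theorem for finite Morse index solutions of $-\Delta v=g_\infty(v)$ with \emph{general} supercritical $g_\infty$ is exactly what is missing from the literature (Farina's and Dancer--Farina's results cover only power and exponential nonlinearities), and is essentially what Theorem~\ref{mainthm 1} is meant to provide. A smaller issue: your Pohozaev step yields $\lambda\int_\Omega F(u)\le C$, not $\int_\Omega F(u)\le C$, so the integral bound degenerates as $\lambda\to 0$ and must be handled separately.

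The paper avoids this circularity entirely. Rather than blowing up at the $L^\infty$ scale, it first proves a uniform $W^{1,2}$ decay $\int_{B_\rho}|\nabla u|^2\le C\rho^\delta$ for finite Morse index solutions (Proposition~\ref{uniform integrable}), and then an $\varepsilon$-regularity theorem (Proposition~\ref{prop:eps reg}): if the Dirichlet energy decays like $r^{n-2+2\gamma}$ down to some small scale, it decays all the way to $0$. The blow-up in the proof of Proposition~\ref{prop:eps reg} is performed by subtracting the average and dividing by $r_j^\gamma$, so the rescaled nonlinearity $h_j(t)=r_j^{2-\gamma}f_j(a_j+r_j^\gamma t)$ tends to zero in $C^1_{\rm loc}$ (here the bounded averages $a_j$ and the factor $r_j^{2-\gamma}$ do the work). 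The limit $w_\infty$ is therefore \emph{harmonic}, and Liouville is trivial. The supercriticality \eqref{F and f} is used not at the Liouville stage but through a localized Pohozaev computation on the rescaled $w_j$, showing that the Dirichlet energy cannot concentrate. This sidesteps the need for any Liouville theorem with unknown nonlinearity.
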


\begin{rem}
\label{rmk:superlin}
We observe that, as a consequence of \eqref{F and f}, it holds
\begin{equation}\label{supercritical}
  f(t)\geq c_1 t^{\frac{n+2}{n-2}+\ez}\qquad \forall\,t \geq 0,\end{equation}
  with $c_1:=f(0) t_0^{-\frac{n+2}{n-2}-\ez}$.

Indeed, \eqref{F and f} can be rewritten as
$$
F'(t) \geq \frac{\left (\frac{2n}{n-2}+ \ez  \right)}{t}F(t) \quad \text{ for all }  \  t \geq t_0,
$$
so it follows from Gr\"onwall inequality that
$$
F(t)\geq F(t_0)\left(\frac t{t_0}\right)^{\frac{2n}{n-2}+\ez}.
$$
Inserting this information in \eqref{F and f}, we get
$$f(t)\geq \left(\frac{2n}{n-2}+\ez \right)F(t_0)t_0^{-\frac{2n}{n-2}-\ez} t^{\frac{n+2}{n-2}+\ez}    \quad \text{ for all }  \   t \geq t_0.$$
Also, since $f$ is increasing we have $F(t_0)\geq f(0)t_0,$ and therefore
$$f(t)\geq 
\left\{
\begin{array}{cc}
f(0) \left(\frac{2n}{n-2}+\ez \right) t_0^{-\frac{n+2}{n-2}-\ez} t^{\frac{n+2}{n-2}+\ez}    & \text{ for}  \   t \ge t_0\\
f(0) & \text{ for }  \   0\le t < t_0,
\end{array}
\right. 
$$
which implies \eqref{supercritical}.
\end{rem}

\begin{rem}
As mentioned before, the dimensional range $3\le n \le 9$ follows from \cite[Theorem 1.2]{CFRS2020}, since boundedness of stable solutions for all nonlinearities is true only under this assumption.
However, for some particular choices of nonlinearities (e.g., $f(u)=(1+u)^p$ for suitable values of~$p$), we believe that our ideas and techniques could be applied also in higher dimension (cf. \cite{DDF2011}).
\end{rem}

\subsection{About Theorem~\ref{mainthm 1}: extensions and tools used in the proof}
We first discuss some possible extensions and generalizations of Theorem~\ref{mainthm 1},
and then we briefly present the three key ingredients behind its proof.

\subsubsection{On the convexity of $\Omega$}
The convexity assumption on $\Omega$ in Theorem~\ref{mainthm 1} allows us:\\
- to focus only on interior regularity, since the regularity near the boundary is handled via the moving plane method, see Lemma~\ref{stable outside K} below;\\
- to apply the classical Derrick-Pohozaev argument on convex domains, see the argument after \eqref{vs}. 

We believe that a nontrivial modification of our techniques could be used to analyze the boundary behavior inside general smooth domains.
However our proof strongly relies on the Derrick-Pohozaev argument, and this requires $\Omega$ to be at least star-shaped (see, e.g., \cite[Theorem 1, Page 515]{E2010}).
Hence, it looks likely to us that by combining the ideas developed in this paper with the boundary regularity from \cite{CFRS2020}, one should be able to extend Theorem~\ref{mainthm 1} to (sufficiently smooth) star-shaped domains.

\subsubsection{A result for convex nonlinearities}
\label{sect:convex}
The assumption that the nonlinearity $f$ belongs to a family $\mathcal K$ that is compact for the $C^1_{\rm loc}(\R)$ topology can be removed, if one assumes the nonlinearities to be convex and to be dominated by a fixed  continuous nonnegative function $g:\R\to \R$.
More precisely, if $f$ is a convex function such that $0\leq f \leq g$ and \eqref{F and f} holds, then $\|u\|_{L^\infty(\Omega)}\leq C(n,k,g,\hat\lambda,\ez,t_0,\Omega).$

Indeed, the compactness assumption in $C^1_{\rm loc}$ is used only to apply Proposition~\ref{prop:stability}.
If all the nonlinearities are convex, then the bound $0\leq f \leq g$ guarantees compactness in $C^0_{\rm loc}$. Therefore, one only needs to check that Proposition~\ref{prop:stability} holds if $f_j$ are convex functions satisfying $f_j \to f_\infty$ in $C^0_{\rm loc}(\R)$. This can be done by suitable adapting the notion of stability for convex functions, defining $$Q_u(\xi) :=\int_{\Omega'}\Big(|\nabla\xi|^2 -f_-'(u)\xi^2\Big)\, dx\qquad \text{with $f'_-(t):=\lim_{\tau\to 0^+}\frac{f(t)-f(t-\tau)}{\tau}=\sup_{\tau>0}\frac{f(t)-f(t-\tau)}{\tau}$}.
$$
Indeed, with this definition, the results from \cite{CFRS2020} still apply. In addition, the following lower semicontinuity property holds:
$$
t_j \to t,\quad f_j \to f \text{ in }C^0_{\rm loc}(\R)\qquad \Rightarrow\qquad f'_-(t) \leq \liminf_{j\to \infty}(f_j)_-'(t_j),
$$
and this allows one to show that upper bounds on the Morse index are preserved. We leave the details to the interested reader.

\subsubsection{Main tools}
\label{sect:pf main thm}
As mentioned before, the proof of Theorem~\ref{mainthm 1} is based on a series of new important results for finite Morse index solutions. These are:\\
(i) A general stability result for bounded Morse index solutions stating that, for $3 \leq n \leq 9$, these families are weakly compact in $W^{1,2}$ and they converge in $C^2_{\rm loc}$ outside finitely many points (see Proposition~\ref{prop:stability}). This result relies on the smoothness of stable solutions for $n\leq9$ obtained in \cite{CFRS2020}, and on a slight improvement of it proved in Appendix A.\\
(ii) A uniform $W^{1,2}$ integrability estimate for  finite Morse index solutions (see Proposition~\ref{uniform integrable}). This result depends both on the supercriticality assumption \eqref{F and f} and on the interior $W^{1,2}$ estimates for stable solutions, cf. \cite{CFRS2020}.\\
(iii) A $\varepsilon$-regularity theorem for finite Morse index solutions stating that, if the $W^{1,2}$ norm of a solution inside a ball $B_r$ decays sufficiently fast for $r \in [\varepsilon,1]$ with $\varepsilon \ll 1$, then it decays all the way to the origin (see Proposition~\ref{prop:eps reg}).\\
It is worth observing that while (i) needs the dimensional restriction $n \leq 9$, both (ii) and (iii) hold in every dimension.
Besides playing a crucial role in proving Theorem~\ref{mainthm 1}, we believe that these results have their own interest.

\subsection{An application to the Gelfand problem associated to analytic supercritical nonlinearities}
\label{sect:Gelfand}
Given $f:\R\to \R$ nonnegative and increasing, and $\lambda\geq 0,$ the  so-called Gelfand problem for 
 $f$ consists in studying the nonlinear elliptic problem
$$
\left\{
\begin{array}{ll}
-\Delta u= \lambda f(u) &\text{ in } \Omega,\\
 u=0 &\text{ on } \partial\Omega.
 \end{array}
 \right.
$$
This problem has a long history: it was first presented by Barenblatt in a volume edited by Gelfand \cite{G1963}, and a series of authors studied it later, in particular in the range where $u$ is stable; we refer the interested reader to \cite{BE1989,B2003,BV1997,D2011,C2017} for a complete account on this topic.

In this paper we want to study the solution curve associated to the Gelfand problem: we look for a continuous curve $\lambda\colon [0,\infty)\to [0,\infty)$ with $\lambda(0)=0,$ and for a one-parameter family of solutions $\{u_s\}_{s\geq 0}$, such that
$$
\left\{
\begin{array}{ll}
-\Delta u_s= \lambda(s)f(u_s) &\text{ in } \Omega,\\
 u_s=0 &\text{ on } \partial\Omega.
 \end{array}
 \right.
$$
When $\Omega=B_1$, the cases $f(t)=(1+\az t)^\beta,\,\az,\beta>0,$ and $f(t)=  e^t$, have been fully understood in \cite{JL1972} via ODE methods. In particular, when $3\le n\le 9$, the authors proved that there are infinitely many turning points in the solution curve $s\mapsto (\lambda(s),\|u_s\|_{L^\infty(\Omega)},)$ for suitable values of $\beta$. Later, similar phenomena were observed for special functions $f$ or in low dimensional domains with suitable symmetries (see, e.g., \cite{NS1994,D2000,GW2008,DF2009, K2014} and the reference therein). 

Assume now that $\Omega$ is a convex set of class $C^3$, let $C^1_0(\overline\Omega)$ denote the Banach space of $C^1$ functions on $\overline\Omega$ that vanish on $\partial\Omega$,
and consider the following open subset of $C^1_0(\overline\Omega)$ endowed with the $C^1$ topology:
\begin{equation}
\label{eq:O}
\mathcal O:=\{u\in C^1_0(\overline\Omega)\,:\,u>0 \text{ in }\Omega,\,\partial_\nu u|_{\partial\Omega}<0\},
\end{equation}
where $\nu$ denotes the outer unit normal to $\partial\Omega$.
Following \cite{D2000}, assume the map
$$
\mathcal O\ni u \mapsto f(u) \in C^0(\overline\Omega)
$$
to be real analytic (as noted in \cite{D2000}, this is the case for instance if $f$ is analytic).
Then, thanks to our Theorem~\ref{mainthm 1}, one can apply the global analytic bifurcation theory developed in \cite[Section 2.1]{BDT2000}
to show the existence of a piecewise analytic continuous curve $[0,\infty)\ni s \mapsto (\lambda(s),u_s)$, with $(\lambda(0),u_0)=(0,0)$, such that both $\|u_s\|_{L^\infty(\Omega)}$ and ${\rm ind}(u_s,\Omega)$ tend to infinity as $s \to \infty$.
Moreover there exists a sequence $(\lambda(s_i),u_{s_i})$ such that $\|u_{s_i}\|_{L^\infty(\Omega)} \to \infty$
and each point of this sequence is either a bifurcation or a turning point.
This is a complete statement:

\begin{thm}\label{mainthm 2}
Let $3\le n\le 9$, $\Omega\subset \mathbb R^n$ a bounded convex domain of class $C^3$, and $f> 0$ an increasing analytic function satisfying \eqref{F and  f}. Let
$$\mathcal S:=\{(\lambda,u)\in  \mathbb R_+ \times C_0^1(\overline\Omega) \colon -\Delta u - \lambda f(u)=0\,\text{ and }\, -\Delta - \lambda f'(u) \text{ is invertible with bounded inverse}\}. $$
Then there exist two continuous mappings 
$$h_1\colon \mathbb R_+ \to \mathbb R_+,\qquad h_2\colon \mathbb R_+ \to C_0^1(\overline\Omega), $$
 so that, denoting $h=(h_1,\,h_2)$, we have:
\begin{enumerate}
\item[(i)] $h \colon \mathbb R_+ \to \overline{\mathcal{S}}$ and $\lim_{s\to \infty} {\rm ind}(h_2(s),\Omega)=\infty.$
\item[(ii)]  $h$ is injective on $h^{-1}(\mathcal{S})$ with $h_1'(s)\neq 0$, and real analytic at all points $s\in h^{-1}(\mathcal{S})$.
\item[(iii)]  The set  $h^{-1}(\overline{\mathcal{S}}\setminus \mathcal{S})$ consists of isolated values.
\item[(iv)] For every point $s_0\in h^{-1}(\overline{\mathcal{S}}\setminus \mathcal{S})$ there exists an injective and continuous reparameterization $s=\gamma(\sigma)$, $\sigma \in [-1,1]$, such that $s_0=\gamma(0)$
and $h\circ \gamma$ is a real analytic function whose derivatives might only vanish at $0$.
\item[(v)] There are infinitely many values of $s>0$ where $h(s)\in \overline{\mathcal{S}}\setminus \mathcal{S}$ is either a bifurcation or a turning point. Namely, either in every neighborhood of $h(s)$ there exists a solution of \eqref{equ} which is not in the image of $h$ (and then $h(s)$ is a bifurcation point), or the previous case do not happen but $h_1$ is not locally injective (and then $h(s)$ is a turning point).
\end{enumerate}
\end{thm}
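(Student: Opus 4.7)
The plan is to apply the global analytic bifurcation theory of \cite[Section~2.1]{BDT2000} to the real analytic map
$$
\Phi(\lambda,u):=u+\lambda(-\Delta)^{-1}[f(u)],\qquad (\lambda,u)\in\mathbb R_+\times\mathcal O,
$$
where $\mathcal O\subset C^1_0(\overline\Omega)$ is defined in \eqref{eq:O}. Since $\partial\Omega\in C^3$, Schauder theory makes $(-\Delta)^{-1}\colon C^0(\overline\Omega)\to C^1_0(\overline\Omega)$ bounded and compact, and the standing hypothesis that $u\mapsto f(u)$ is real analytic from $\mathcal O$ into $C^0(\overline\Omega)$ makes $\Phi$ real analytic. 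As $D_u\Phi(0,0)=\mathrm{id}$, the analytic implicit function theorem starts a local analytic branch at $(0,0)$; the Dancer--Buffoni--Toland theorem then extends it to a maximal continuous piecewise analytic curve $h=(h_1,h_2)\colon[0,T)\to\mathbb R_+\times\mathcal O$ satisfying the structural properties (ii)--(iv). The alternative that $h$ closes into a loop at $(0,0)$ is ruled out by the local uniqueness provided by the classical implicit function theorem there.

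The rest of the argument is driven by two a priori bounds along $h$. First, $\lambda(s)$ is uniformly bounded: by \eqref{supercritical} one has $f(t)\geq ct-C$ on $[0,\infty)$, and testing $-\Delta u=\lambda f(u)$ against the first Dirichlet eigenfunction of $\Omega$ (using $u>0$ in $\Omega$, which follows from $f(0)>0$ and the strong maximum principle) produces $\lambda(s)\leq\widehat\lambda(n,\Omega,f)$. Second, on every subinterval $[0,s_0]\subset[0,T)$ the Morse index $k(s):={\rm ind}(h_2(s),\Omega)$ is bounded: property (iii) gives that the singular set $h^{-1}(\overline{\mathcal S}\setminus\mathcal S)\cap[0,s_0]$ is finite, and on each complementary open arc the operator $-\Delta-\lambda(s)f'(h_2(s))$ is invertible, so its eigenvalues move continuously in $s$ without crossing zero, keeping $k(s)$ constant; hence $k(s)\leq k_0(s_0)$. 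Applying Theorem~\ref{mainthm 1} (with $\mathcal K=\{f\}$, $c_0=f(0)>0$, $\hat\lambda=\widehat\lambda$) now yields $\|h_2(s)\|_{L^\infty(\Omega)}\leq C(s_0)$ on $[0,s_0]$, and a Schauder bootstrap upgrades this to $C^{2,\alpha}$-precompactness inside $\mathcal O$ (Hopf's lemma and the strong maximum principle keep the limit away from $\partial\mathcal O$). This precompactness prevents escape to infinity in finite parameter and, combined with maximality, forces $T=\infty$.

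With $T=\infty$, properties (ii)--(iv) are inherited from the bifurcation theorem. For (i), observe that $h([0,\infty))$ cannot remain in a compact subset of $\mathbb R_+\times\mathcal O$, since otherwise analytic continuation would close a loop or contradict the maximality of $T$. Since $\lambda(s)\leq\widehat\lambda$ is bounded, the unboundedness must lie in the $u$-component, so $\|h_2(s_k)\|_{L^\infty}\to\infty$ along some sequence $s_k\to\infty$. The contrapositive of Theorem~\ref{mainthm 1} then yields $k(s_k)\to\infty$, which is (i). Property (v) follows at once: the integer valued $k(\cdot)$ is locally constant on regular analytic arcs and changes only at singular parameters, so its unboundedness forces infinitely many points in $h^{-1}(\overline{\mathcal S}\setminus\mathcal S)$, each of which is by definition either a bifurcation or a turning point. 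The most delicate step I expect is the uniform Morse index bound on $[0,s_0]$: it rests on property (iii) (itself part of the BDT output) and on Theorem~\ref{mainthm 1}, and is precisely what closes the compactness loop needed to reach $T=\infty$.
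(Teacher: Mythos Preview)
Your proposal is correct and follows essentially the same route as the paper: set up the analytic map $\mathcal F(\lambda,u)=-u+\lambda(-\Delta)^{-1}[f(u)]$, invoke the global analytic bifurcation theory of \cite[Section~2.1]{BDT2000} together with the argument of \cite[Theorem~1]{D2000}, and use Theorem~\ref{mainthm 1} to supply the compactness needed to rule out finite-time blow-up and to force the Morse index to diverge. The paper's proof is in fact only two lines, deferring all details to \cite{D2000} and \cite{BDT2000}; your write-up spells out the mechanism (uniform $\lambda$-bound, constancy of the index on regular arcs, Theorem~\ref{mainthm 1} on bounded-index pieces) that is implicit in those references. One small slip: your map should read $\Phi(\lambda,u)=u-\lambda(-\Delta)^{-1}[f(u)]$ (or its negative, as in the paper), since with the plus sign $\Phi=0$ encodes $-\Delta u=-\lambda f(u)$ rather than the equation at hand.
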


\begin{proof}
Let $\mathcal O$ be as in \eqref{eq:O}, and define the analytic map
$$\mathcal F\colon \mathbb R_+\times \mathcal O \to C^1_0(\overline\Omega), \qquad  \mathcal F(\lambda,\, u) :=  - u + \lambda \mathcal A(u),$$
where $\mathcal A(u):=(-\Delta)^{-1}[f(u)]$, and $(-\Delta)^{-1}$ denotes the inverse of the Dirichlet Laplacian in $\Omega$.
Arguing exactly as in the proof of \cite[Theorem 1]{D2000} (see also the remark after the statement of the theorem), the result follows from \cite[Section 2.1]{BDT2000}.
\end{proof}

\begin{rem}
It was pointed out in \cite[Remark 4]{D2000} that, when $\Omega$ is a $C^3$ strongly convex domain with certain symmetries, a careful modification of \cite{ST1979}  gives that, the image of $h$ is a smooth curve with only infinitely many turning points but not bifurcation points. Also, this property in generic in a neighborhood of such domains.
We expect a similar result to hold also in our setting. 
\end{rem}

\subsection{Structure of the paper}
The paper is organized as follows. In Section 2 we present a series of results on finite Morse index solutions, which will be crucial for proving Theorem~\ref{mainthm 1}. Then, in Section 3 we prove Theorem~\ref{mainthm 1}. Finally, in a first appendix, we show that \cite[Theorem 1.2]{CFRS2020} holds also for $W^{1,2}$ stable solution that are $C^2$ outside one point. This result is used in the proof of Proposition~\ref{prop:stability}.
Then, in a second appendix, we describe how the method in \cite{CFRS2020} implies uniform boundedness of solutions whenever the spectrum of $-\Delta -f'(u)$ is bounded from below.

\medskip
\noindent
{\it Acknowledgments.} The authors are grateful to Xavier Cabr\'e and Alberto Farina for useful comments on a preliminary version of this manuscript.

\section{Technical tools on finite Morse index solutions}
Let us fix some notation. 
For $x\in \mathbb R^n$ and $r>0$, we denote by $B_r(x)$ the Euclidean ball centered at $x$ with radius $r$. The center is usually omitted when $x$ is the origin. By $\az B$ we mean the ball  with the same center as $B$ but $\az$ times its radius. We  write  constants as positive real numbers $C(\cdot)$, with the parentheses including all the parameters on which the constants depend. We note that $C(\cdot)$ may vary between appearances, even within a chain of inequalities. Sometimes we use $C_n, c_n$ to emphasize that a constant depends only on the dimension.

The goal of this section is to prove several new important results on finite Morse index solutions that will be used in the next section to prove Theorem~\ref{mainthm 1}.
First, we need to introduce a notion of weak solution with bounded Morse index. 
\begin{defn}
Let $\U\subset \R^n$ be an open set, and let $f:\R\to \R$ be nonnegative.
We say that  $u \in W^{1,2}_{\rm loc}(\U)$ is a weak solution of $-\Delta u=f(u)$ in $\U$ if $f(u) \in L^1_{\rm loc}(\U)$ and
$$
\int_{\U}\nabla u\cdot \nabla \varphi \,dx=\int_{\U}f(u)\,\varphi\,dx \qquad \forall\,\varphi \in C^1_c(\U).
$$
Assume in addition that $f$ is of class $C^1$. Then we say that $u$
has {\em finite Morse index} $k \in \mathbb N$ in $\U$, and we write ${\rm ind}(u,\U)=k$, if $f'(u) \in L^1_{\rm loc}(\U)$ and $k$ is the maximal dimension of a subspace $X_k\subset C^1_c(\U)$ such that
$$
Q_u(\xi) :=\int_{\U}\Big(|\nabla\xi|^2 -f'(u)\xi^2\Big)\, dx<0 \qquad \forall\,\xi \in X_k \setminus \{0\}.
$$
\end{defn}

As we shall see below, whenever $f' \geq 0$ it is possible to prove an a priori bound on the $L^1_{\rm loc}$ norm of $f'(u)$ in terms of the Morse index.
Then, by Fatou's Lemma, this a priori bound holds for all weak solutions that are limits of smooth solutions (see Proposition~\ref{prop:stability} below).

\begin{lem}
\label{lem:Morse}
Let $\U\subset \R^n$ be an open set, let $f:\R\to \R$ be nonnegative, increasing, and of class $C^1$, and let $u \in W^{1,2}_{\rm loc}(\U)$ be a weak solution of $-\Delta u=f(u)$ in $\U$ with ${\rm ind}(u,\U)\leq k$.
Then:
\begin{itemize}
\item[(i)] 
If $\{\U_i\}_{i=1}^{k+1}$ is a disjoint family of open subsets of $\U$, then $u$ is stable in at least one set $\U_i$.
\item[(ii)] The following uniform bound holds:
\begin{equation}
\label{eq:L1 f'u}
\int_{B_r(\bar x)}f'(u)\,dx \leq C_n(1+k)^{\frac2n}r^{n-2} \qquad \forall\,B_{2r(\bar x)}\subset \U.
\end{equation}
\end{itemize}

\end{lem}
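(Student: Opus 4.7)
The plan is a disjoint-support argument. Suppose, toward a contradiction, that $u$ is unstable in each $\U_i$; pick $\xi_i\in C_c^1(\U_i)$ with $Q_u(\xi_i)<0$. The disjointness of the supports makes the family $\{\xi_i\}_{i=1}^{k+1}$ linearly independent and forces $Q_u\bigl(\sum_i c_i\xi_i\bigr)=\sum_i c_i^2\,Q_u(\xi_i)<0$ for every nonzero $(c_i)$. This produces a $(k+1)$-dimensional subspace of $C_c^1(\U)$ on which $Q_u<0$, contradicting ${\rm ind}(u,\U)\leq k$.

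\textbf{Part (ii).} I would argue by induction on $k$. The base case $k=0$ is the classical stability estimate: testing $Q_u(\varphi)\geq 0$ against a radial cutoff $\varphi\in C_c^1(B_{2r}(\bar x))$ with $\varphi\equiv 1$ on $B_r(\bar x)$ and $|\nabla\varphi|\leq C/r$ yields $\int_{B_r}f'(u)\leq\int|\nabla\varphi|^2\leq C_n r^{n-2}$, which is the claimed bound.

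For the inductive step ($k\geq 1$), I would set $\rho:=c_n r(k+1)^{-1/n}$ for a small dimensional $c_n$, extract a maximal family of disjoint balls $\{B_\rho(y_j)\}_{j=1}^{N}\subset B_{2r}(\bar x)$ with $y_j\in B_r(\bar x)$, and cover $B_r(\bar x)$ by suitable dilates via a Vitali/shifted-grid argument so that, up to an absolute dimensional overhead, stability bounds at the scale $\rho$ control the full integral. By part (i) applied to the disjoint $\{B_\rho(y_j)\}$, at most $k$ of these sub-balls carry $u$ intrinsically unstable, and the indices $k_j:={\rm ind}(u,B_\rho(y_j))$ of the unstable ones satisfy $\sum_j k_j\leq k$. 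On each stable sub-ball the cutoff $\xi(x)=(\rho-|x-y_j|)_+/\rho$ gives $\int_{B_{\rho/2}(y_j)}f'(u)\leq C_n\rho^{n-2}$, while on each unstable sub-ball with $k_j<k$ the inductive hypothesis gives $\int_{B_{\rho/2}(y_j)}f'(u)\leq C_n(k_j+1)^{2/n}\rho^{n-2}$. Summing over $j$, using concavity of $t\mapsto t^{2/n}$ together with $\sum(k_j+1)\leq 2k$ and the scale identity $(k+1)\rho^{n-2}\sim_n(k+1)^{2/n}r^{n-2}$, produces a total bound $\leq C_n(k+1)^{2/n}r^{n-2}$.

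The main obstacle is the exceptional case in which a single sub-ball $B_\rho(y_{j_0})$ concentrates the full Morse index $k_{j_0}=k$, so that the inductive hypothesis does not directly apply on it. On such a sub-ball I would iterate the construction, producing a nested sequence $B_{R_m}(z_m)$ with geometric ratio $R_{m+1}/R_m=c_n(k+1)^{-1/n}$; at every level all but one of the new sub-balls are handled by stability or induction and jointly contribute at most $C_n(k+1)^{2/n}R_m^{n-2}$. Since $(k+1)^{-(n-2)/n}<1$ for $n\geq 3$, the tail series $\sum_m R_m^{n-2}$ is summable once $c_n$ is chosen small, and $\int_{B_{R_m}}f'(u)\to 0$ as $R_m\to 0$ by $f'(u)\in L^1_{\rm loc}$. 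Combining these ingredients closes the estimate with a purely dimensional constant in front of $(1+k)^{2/n}r^{n-2}$.
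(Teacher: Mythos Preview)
Your Part~(i) is correct and identical to the paper's argument. For Part~(ii) your inductive scheme is genuinely different from what the paper does: there is no induction on $k$. Instead, the paper applies a decomposition theorem of Grigor'yan--Netrusov--Yau to the measure $\chi_{B_r(\bar x)}f'(u)\,dx$ to extract roughly $k$ annuli $A_i$, each carrying at least a $1/(C_n k)$-fraction of the total mass, with the doubled annuli $2A_i$ pairwise disjoint. If $\int_{B_r}f'(u)$ exceeded $C_n(1+k)^{2/n}r^{n-2}$, then on $k+1$ of these annuli the mass would beat the $2$-capacity ${\rm Cap}_2(A_i,\partial(2A_i))\le C_n|A_i|^{1-2/n}$, and near-capacitary cutoffs on the disjoint $2A_i$ would violate the index bound. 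This is a one-shot argument; the exponent $2/n$ emerges from the capacity/volume relation rather than from a choice of scale $\rho$.

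Your approach is also workable, but there is a real gap at the step where you pass from the disjoint family $\{B_\rho(y_j)\}$ to a covering of $B_r(\bar x)$. With disjoint balls you indeed get $\sum_j k_j\le k$ and at most one ball of full index $k$; however your cutoff only controls $\int_{B_{\rho/2}(y_j)}f'(u)$, and halved balls of a \emph{disjoint} family do not cover $B_r(\bar x)$, so summing over $j$ does not bound $\int_{B_r}f'(u)$. If instead you cover by Vitali dilates $B_{3\rho}(y_j)$, you would need stability on $B_{6\rho}(y_j)$, and those are no longer disjoint, so the ``single bad ball'' conclusion fails. The repair---Besicovitch covering by $B_{\rho/2}(y_j)$ and partition of the doubled balls $B_\rho(y_j)$ into $N_n'$ disjoint subfamilies, exactly as the paper does later in the proof of Proposition~\ref{uniform integrable}---yields up to $N_n'$ balls of index $k$ per level rather than one. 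Your nested iteration then branches, but $\sum_m (N_n')^m R_m^{n-2}$ still converges for $c_n$ small enough since $R_m/R_{m-1}\le c_n(k+1)^{-1/n}$, and the induction closes with a purely dimensional constant. This is precisely the content you swept under ``Vitali/shifted-grid, up to an absolute dimensional overhead''.
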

\begin{proof}
To prove (i) we note that, if by contradiction $u$ was unstable inside each set $\U_i$, then there would exist functions $\xi_{i} \in C^1_c(\U_i)$ such that
$$
\int_{\U}\left(|\nabla \xi_i|^2-f'(u)\xi_i^2\right)\,dx<0.
$$
Since the functions $\{\xi_{i}\}_{i=1}^{k+1}$ have disjoint support, this implies that
$$
\int_{\U}\left(|\nabla \xi|^2-f'(u)\xi^2\right)\,dx<0 \qquad \forall\, \xi \in {\rm Span}(\xi_{1},\ldots,\xi_{k+1})\setminus \{0\},
$$
therefore ${\rm ind}(u,\U)\geq k+1$, a contradiction.

We now prove (ii), following the ideas in \cite[Theorem 5.9]{GNY2004}. 
Given an open set $\mathcal O$ and a pair of sets $E,\,F \subset \mathcal O$,  the {\it $p$-capacity between $E$ and $F$ inside $\mathcal O$} for $p>1$ is defined as
 $${\rm Cap}_p(E,\,F,\,\mathcal O)=\inf\{\|v\|^p_{ W^{1,\,p} (\mathcal O)}: \ v\in\Delta(E,\,F )\},$$
 where $\Delta(E,\,F )$ denotes the class of all functions $v\in W^{1,\,p}(\mathcal O)$ that are continuous
in $\mathcal O$ and satisfy $v=1$ on $E$, and $v=0$ on $F$. 
In particular, if $A=B_\rho(z)\setminus B_{\rho/2}(z)$ is an annulus such that 
$2A:=B_{2\rho}(z)\setminus B_{\rho/4}(z)$ is contained inside $B_{2r}(\bar x)$, 
to control ${\rm Cap}_2(A,\partial (2A),B_{2r}(\bar x))$ we can choose the function $v_{z,\rho}(x):=\min\big\{1,(4\rho^{-1}|x-z|-1)_+, (2-\rho^{-1}|x-z|)_+\big\}$ to obtain
\begin{equation}\label{capa estimate}
{\rm Cap}_2(A,\,\partial(2A),\,B_{2r}(\bar x))\le \|v_{z,\rho}\|_{W^{1,2}} = C(n) |A|^{1-\frac 2 n}. 
\end{equation}
Let us now consider the metric space $X:=\overline{B_{2r}(\bar x)}$ endowed with the Euclidean metric. In this space, we call ``$X$-annuli'' sets of the form $\left(B_\rho(z)\setminus B_{\rho/2}(z)\right)\cap \overline{B_{2r}(\bar x)}$ for some $z \in \overline{B_{2r}(\bar x)}$. 

Define the measure on $X$ given by $\sigma:=\chi_{B_r(\bar x)}f'(u)dx$, and let $\kappa \in \mathbb N$ be a large constant to be fixed later.
Since $\sigma$ has no atoms,
we can apply \cite[Theorem 1.1]{GNY2004} to deduce the existence of 
a family of Euclidean annuli $\{A_i:=B_{\rho_i}(z_i)\setminus B_{\rho_i/2}(z_i)\}_{i=1}^{\kappa}$, with $z_i \in B_{2r}(\bar x)$, such that
\begin{equation}\label{Ai}
 \int_{B_r(\bar x)} f'(u)\, dx \le C_0(n)\,\kappa  \int_{B_{r}(\bar x)\cap A_i} f'(u)\, dx \qquad \forall\,i=1,\ldots,k+1
\end{equation}
and $\{(2 A_i)\cap B_{2r}(\bar x)\}_{i=1}^{\kappa}$ are pairwise disjoint.

With no loss of generality we can assume that $B_{r}(\bar x)\cap A_i \neq\emptyset$ (otherwise \eqref{Ai} would imply that $\int_{B_r(\bar x)} f'(u)\, dx=0$ and the result would be trivially true).
Let us split these annuli into two families: if $\rho_i<r/4$ then we say that $i \in \mathcal I_1$,
otherwise we say that $i \in \mathcal I_2$.

Note that, since $B_{r}(\bar x)\cap A_i \neq\emptyset$, for $i \in \mathcal I_2$ it holds
$(2 A_i)\cap B_{2r}(\bar x) \geq c_nr^n$ for some dimensional constant $c_n>0$.
Hence, since the sets $\{(2 A_i)\cap B_{2r}(\bar x)\}_{i\in \mathcal I_2}$ are disjoint,
we deduce that $\#\mathcal I_2 \leq N_n$ for some dimensional constant $N_n\geq 1$.

On the other hand, when $i \in \mathcal I_1$, since $B_{r}(\bar x)\cap A_i \neq\emptyset$ and $\rho_i<r/4$ it follows that $(2 A_i)\cap B_{2r}(\bar x)=2A_i$,
hence the sets $\{2 A_i\}_{i\in \mathcal I_1}$ are pairwise disjoint. 
Also, it follows from \eqref{capa estimate} that
\begin{equation}\label{eq:capa2}
{\rm Cap}_2(A_i,\,\partial(2A_i),\,B_{2r}(\bar x))\le C(n)  |A_i|^{1-\frac2n}.\end{equation}
Now, fix $\kappa:=N_n+2(k+1)$ so that $\#\mathcal I_1\geq 2(k+1)$.
Since the sets $\{2 A_i\}_{i\in \mathcal I_1}$ are pairwise disjoint and contained inside $B_{2r}(\bar x)$, there exists a subset of indices $\mathcal I_{1}'\subset \mathcal I_1$ such that $\#\mathcal I_1'\geq k+1$ and
$$
|2A_i|\leq \frac{1}{k+1}|B_{2r}| \qquad \forall\,i \in \mathcal I_1',
$$
that combined with \eqref{eq:capa2} gives
\begin{equation}\label{eq:vol}
{\rm Cap}_2(A_i,\,\partial(2A_i),\,B_{2r}(\bar x))\le C_1(n) (1+k)^{\frac{2}n -1}r^{n-2} \qquad \forall\,i \in \mathcal I_1'.
\end{equation}
Now,  assume by contradiction that \eqref{eq:L1 f'u} does not hold with  $C_n=4C_0C_1(N_n+1)$, namely
\begin{equation}
\label{eq:contradict}
\int_{B_r(\bar x)}f'(u) > C_n(1+k)^{\frac2n}r^{n-2},
\end{equation}
where $C_0$ and $C_1$ are as in \eqref{Ai} and \eqref{eq:vol}. Then,
since $2(N_n+1)(k+1)\geq \kappa$, combining   \eqref{eq:contradict}, \eqref{eq:vol}, and \eqref{Ai}, we get
$$
{\rm Cap}_2(A_i,\,\partial(2A_i),\,B_{2r}(\bar x))< (2C_0\kappa)^{-1}
\int_{B_r(\bar x)} f'(u)\, dx \le \frac 1 2 \int_{A_i} f'(u)\, dx\qquad \forall\,i \in \mathcal I_1'.
$$
Choose functions $\xi_i \in C^1_c(2A_i)$ that almost minimize the capacity ${\rm Cap}_2(A_i,\,\partial(2A_i),\,B_{2r}(\bar x))$, 
so that
$$\int_{B_{2r}(\bar x)} |\nabla\xi_i|^2\, dx \le \frac23 \int_{A_i} f'(u)\, dx \leq \frac 23 \int_{B_{2r}(\bar x)} f'(u) \xi_i^2\, dx \qquad \forall\,i \in \mathcal I_1'.$$
Since the sets $\{2A_i\}_{i\in \mathcal I_1'}$ are pairwise disjoint and $\# \mathcal I_1'\geq k+1$, we conclude that  $\{\xi_i\}_{i \in \mathcal I_1'}$ spans a $(k+1)$-dimensional subspace of $C^1_c(B_{2r}(\bar x))$ where the stability inequality fails.
This contradicts ${\rm ind}(u,B_{2r}(\bar x))\leq k$ and concludes the proof. 
\end{proof}

We now prove a crucial convergence result for weak $W^{1,2}$ limits of smooth solutions with bounded Morse index.
Note that, a consequence of Proposition~\ref{prop:stability} below, limit of smooth solutions with bounded Morse index are still smooth.
However the result does not provide any uniform bound on the sequence $u_j$. In particular, it could be that $\|u_j\|_{L^\infty} \to \infty$, as the example provided by \eqref{critical case} shows.

\begin{prop}\label{prop:stability}
Let $n\leq 9$, $\U\subset \R^n$ an open set, and $u_j \in C^2(\U)$ a sequence of functions satisfying 
$$
-\Delta u_j= f_j(u_j) \qquad\text{ in } \U
$$
with $f_j:\R\to \R$ nonnegative, increasing, and of class $C^1$.
Assume that 
$$
\text{${\rm ind}(u_j,\U)\leq k$ for some $k \in \mathbb N$,}\qquad\text{$\sup_{j}\|u_j\|_{W^{1,2}(\U)} <+\infty$,}\qquad \text{$f_j \to f_\infty$ in $C^1_{\rm loc}(\R)$.}
$$
Then there exist a subsequence $u_{j(m)}$ and a discrete set $\Sigma_\infty \subset \U$, with $\# \Sigma_\infty \leq k$, such that
$$
\text{$u_{j(m)}\rightharpoonup u_\infty$ in $W^{1,2}(\U)$},\qquad \text{$u_{j(m)} \to u_\infty$ in $C^2_{\rm loc}(\U\setminus \Sigma_\infty)$,}
$$
and $u_\infty$ satisfies 
$$
-\Delta u_\infty= f_\infty(u_\infty) \text{ in } \U,\qquad f_\infty'(u_\infty) \in L^1_{\rm loc}(\U), \qquad  {\rm ind}(u_\infty,\Omega)\leq k,\qquad u_\infty \in C^2(\U).
$$
\end{prop}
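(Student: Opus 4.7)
The plan is to pinpoint the possible blow-up set via Morse index concentration, and then obtain $C^2_{\rm loc}$ convergence everywhere outside it. Define
\begin{equation*}
\Sigma_\infty := \Bigl\{x \in \U \colon \lim_{r \downarrow 0}\,\liminf_{j \to \infty} {\rm ind}(u_j, B_r(x)) \geq 1\Bigr\},
\end{equation*}
where the inner limit in $r$ exists by monotonicity. If $\Sigma_\infty$ contained $k+1$ distinct points $x_0,\dots,x_k$, then picking $r>0$ small so that the balls $B_r(x_i)$ are pairwise disjoint inside $\U$ and passing to a common subsequence along which each $B_r(x_i)$ sees at least one negative direction of $Q_{u_j}$, part (i) of Lemma~\ref{lem:Morse} would combine these into a $(k+1)$-dimensional negative subspace of $C^1_c(\U)$, contradicting ${\rm ind}(u_j, \U) \leq k$. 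Hence $\#\Sigma_\infty \leq k$, and in particular $\Sigma_\infty$ is discrete.

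Away from $\Sigma_\infty$, each point admits a radius $r>0$ on which, along a subsequence, $u_j$ is stable on $B_r(x)$. The interior $L^\infty$ estimate for stable solutions in the sharp range $n \leq 9$ from \cite[Theorem 1.2]{CFRS2020}, together with the uniform $W^{1,2}(\U)$ bound on $\{u_j\}$ and the $C^1_{\rm loc}$-precompactness of $\{f_j\}$, supplies a uniform $L^\infty$ bound on $u_j$ on $B_{r/2}(x)$; a Calder\'on--Zygmund + Schauder bootstrap then upgrades this to uniform $C^{2,\alpha}_{\rm loc}$ bounds. Covering $\U \setminus \Sigma_\infty$ and extracting diagonally yields a subsequence converging in $C^2_{\rm loc}(\U \setminus \Sigma_\infty)$ to some $u_\infty$; a further extraction gives $u_{j(m)} \rightharpoonup u_\infty$ weakly in $W^{1,2}(\U)$, the two limits coinciding a.e.\ because $\Sigma_\infty$ is a null set.

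On $\U \setminus \Sigma_\infty$, $u_\infty$ already solves $-\Delta u_\infty = f_\infty(u_\infty)$ classically, and Fatou applied to the uniform bound \eqref{eq:L1 f'u} in Lemma~\ref{lem:Morse}(ii) yields $f_\infty'(u_\infty) \in L^1_{\rm loc}(\U)$ (and similarly $f_\infty(u_\infty) \in L^1_{\rm loc}(\U)$). To extend the equation and the index bound across $\Sigma_\infty$, I exploit that a finite set has vanishing $W^{1,2}$-capacity for every $n \geq 2$: choose cut-offs $\chi_\varepsilon \in C^1_c(\U \setminus \Sigma_\infty)$ with $\chi_\varepsilon \to 1$ pointwise and $\|\nabla \chi_\varepsilon\|_{L^2(\U)} \to 0$, test the PDE against $\varphi \chi_\varepsilon$, and let $\varepsilon \to 0$ to obtain the weak equation on $\U$; the same cut-off perturbation applied to a hypothetical $(k+1)$-dimensional negative subspace of $C^1_c(\U)$ for $Q_{u_\infty}$, together with the $C^2_{\rm loc}$ convergence and dominated convergence for the $f_\infty'(u_\infty)\xi^2$ term, would transfer the negativity to $u_{j(m)}$ and contradict ${\rm ind}(u_{j(m)},\U)\leq k$.

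It remains to prove $u_\infty \in C^2(\U)$. At each $x_0 \in \Sigma_\infty$, pick $r_0$ with $B_{r_0}(x_0) \cap \Sigma_\infty = \{x_0\}$; the plan is to check that, on $B_{r_0}(x_0)$, $u_\infty$ falls under the hypotheses of the refinement of \cite[Theorem 1.2]{CFRS2020} proved in Appendix~A---interior regularity for $W^{1,2}$ stable solutions that are $C^2$ outside a single point---which then yields $u_\infty \in C^2(B_{r_0}(x_0))$, hence $u_\infty \in C^2(\U)$. The hardest part is precisely this reduction: establishing stability of $u_\infty$ on a small punctured ball around $x_0$ (by shrinking $r_0$ and using the already proved bound ${\rm ind}(u_\infty,\U)\leq k$) and lifting it to the whole ball via the vanishing $W^{1,2}$-capacity of $\{x_0\}$ together with $f_\infty'(u_\infty)\in L^1_{\rm loc}$ is the most delicate point, and it is where Appendix~A plays its essential role.
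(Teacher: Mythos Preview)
Your proposal is correct and follows essentially the same architecture as the paper's proof: define the concentration set $\Sigma_\infty$ via failure of local stability along the sequence, bound its cardinality by $k$ through Lemma~\ref{lem:Morse}(i), obtain $C^2_{\rm loc}$ convergence outside $\Sigma_\infty$ via \cite{CFRS2020}, extend the equation across $\Sigma_\infty$ by capacity, get $f_\infty'(u_\infty)\in L^1_{\rm loc}$ from Lemma~\ref{lem:Morse}(ii) and Fatou, bound the limiting index, and finally invoke Appendix~A for $C^2$ regularity. The only notable deviations are that you transfer the index bound to $u_{j(m)}$ by first cutting off test functions away from $\Sigma_\infty$ (where $C^2$ convergence allows direct passage to the limit), whereas the paper applies Fatou's lemma to $f_j'(u_j)\xi^2\ge 0$ without any cut-off; and you spell out the ``locally stable'' reduction (stability on a punctured ball via disjoint-support test functions, then extension across the point by capacity and $f_\infty'(u_\infty)\in L^1_{\rm loc}$), whereas the paper cites \cite[Proposition~1.5.1]{D2011} and \cite[Proposition~2.1]{DDF2011} for this step---your explicit argument here is in fact slightly more careful, since those references assume $C^2$ regularity a priori.
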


\begin{proof}
Given $x \in \U$,  for any $j$ we denote by $r_{j,x}$  the largest radius where $u_j$ is stable around $x$:
$$
r_{j,x}:=\sup\{r \in [0,\dist(x,\partial \U))\,:\,{\rm ind}(u_j,B_{r}(x))=0\},
$$
Then, we  define
$$
r_{\infty,x}:=\limsup_{j\to \infty}r_{j,x},\qquad \Sigma_\infty:=\{x \in \U\,:\,r_{\infty,x}=0\}.
$$
We claim that $\Sigma_\infty$ is a discrete set of cardinality at most $k$.

Indeed, suppose by contradiction that $\Sigma_\infty$ contains $k+1$ points $x_1,\ldots,x_{k+1}$, and fix
$$0<r<\min\left\{\min_{1\leq i \leq k+1}\dist(x_i,\partial \U)),\frac12 \min_{1\leq i,l\leq k+1}|x_i-x_l| \right\}.$$
Since $r_{j,x_i} \to 0$ as $j \to \infty$ (because $x_i \in \Sigma_\infty$), for $j$ large enough  $u_j$ is unstable inside each of the balls $\{B_r(x_i)\}_{i=1}^{k+1}$.
However, since these balls are disjoint (by the choice of $r$), Lemma~\ref{lem:Morse}(i) provides the desired contradiction.

Consider now a family of compact sets $\{K_\ell\}_{\ell \in \mathbb N}$ such that $\U\setminus \Sigma_\infty=\cup_\ell K_\ell,$
and for any $\ell$ consider the covering of $K_\ell$ given by $\{B_{r_{\infty/2,x}}(x)\}_{x \in K_\ell}$. By compactness, there exists a finite set of points $\{x_i\}_{i \in \mathcal I_\ell}\subset K_\ell$ such that $K_\ell\subset \cup_{i \in \mathcal I_\ell}B_{r_\infty/2,x_i}(x_i)$.
Note that, since each set $\mathcal I_\ell$ is finite, for each $\ell \in \mathbb N$ we can choose a subsequence $j_\ell(m)$ such that
$$
r_{\infty,x_i}=\lim_{m\to \infty}r_{j_\ell(m),x_i}\qquad \forall\, i \in \mathcal I_1\cup\ldots\cup \mathcal I_\ell.
$$
Then, by a diagonal argument we can find a subsequence $j(m)$, independent of $\ell$, such that
$$
r_{\infty,x_i}=\lim_{m\to \infty}r_{j(m),x_i}\qquad \forall\, i \in \cup_{\ell \in \mathbb N} \mathcal I_\ell.
$$
Since the functions $u_{j(m)}$ are uniformly bounded in $W^{1,2}(\U)$, up to extracting a further subsequence, there exists a weak limit in $W^{1,2}(\U)$ that we denote by $u_\infty$. 
We now want to show that $u_\infty$ satisfies all the desired properties.

First of all, for each $\ell \in \mathbb N$ we define the open set 
$$
\mathcal O_\ell:=\cup_{i \in \mathcal I_\ell}B_{r_\infty/2,x_i}(x_i) \supset K_\ell.
$$
Since  $u_{j(m)}$ is stable on $B_{r_{j(m)},x_i}(x_i)$ and $r_{j(m),x_i} \to r_{\infty,x_i}$ as $m\to \infty$, it follows by \cite[Theorem 1.2]{CFRS2020} and elliptic regularity\footnote{Recall that $n\leq 9$, and note that $f_j$ are uniformly $C^1$ on compact set since they converge to $f_\infty$.} that 
$$
\|u_{j(m)}\|_{C^{2,\alpha}(\mathcal O_\ell)}\leq C_{\ell,\alpha} \qquad \forall\,m \gg 1,\,\forall\,\alpha \in (0,1),
$$
which implies that $u_{j(m)} \to u_\infty$ in $C^2(\mathcal O_\ell)$. Since $\cup_{\ell} \mathcal O_\ell=\U\setminus \Sigma_\infty$, this proves the convergence in $C^2_{\rm loc}(\U\setminus \Sigma_\infty)$.

To show that $u_\infty$ solves the desired equation, by the $C^2_{\rm loc}(\U\setminus \Sigma_\infty)$ convergence it follows immediately that
$$
-\Delta u_\infty=f_\infty(u_\infty) \qquad \text{in }\U\setminus\Sigma_\infty. 
$$
Then, since $u_\infty \in W^{1,2}(\mathcal U)$ and $\Sigma_\infty$ consists of finitely many points (hence it has zero $W^{1,2}$-capacity), the equation $-\Delta u_\infty=f_\infty(u_\infty)$ must hold inside the whole domain $\U$.

We now note that, thanks to Lemma~\ref{lem:Morse}(ii), 
$$
\int_{B_r(\bar x)}f_{j(m)}'(u_{j(m)}) \leq C_n(1+k)^{\frac2n}r^{n-2} \qquad \forall\,B_{2r(\bar x)}\subset \U.
$$
Since $f_{j(m)}'(u_{j(m)})$ are nonnegative and converge  pointwise to $f_{\infty}'(u_{\infty})$ inside $\U \setminus \Sigma_\infty$ (and so a.e.), Fatou's Lemma implies that 
$$
\int_{B_r(\bar x)}f_\infty'(u_\infty) \leq C_n(1+k)^{\frac2n}r^{n-2} \qquad \forall\,B_{2r(\bar x)}\subset \U,
$$
thus $f'_\infty(u_\infty) \in L^1_{\rm loc}(\U)$.

Now, to prove the bound on the index, assume by contradiction that there exists a $k+1$ dimensional subspace $X'\subset C^1_c(\U)$ where 
$$
Q_\infty(\xi):=\int_{\U}\left(|\nabla \xi|^2-f_\infty'(u_\infty)\xi^2\right)\,dx<0 \qquad \forall\, \xi \in X'\setminus\{0\}.
$$
We claim that also $Q_{j(m)}$ is strictly negative on $X'\setminus \{0\}$ for $m$ sufficiently large.
Indeed, if not, by homogeneity there exists a sequence $\xi_m \in X'\setminus \{0\}$, with $\|\xi_m\|_{C^1}=1$, such that 
$Q_{j(m)}(\xi_m)\geq 0$. Since $X'\subset C^1_c(\U)$ is finite dimensional, all functions $\xi_m$ live in a fixed compact set and, up to a subsequence, they converge in $C^1_c(\U)$ to a limiting function $\xi_\infty \in X'$ with $\|\xi_\infty\|_{C^1}=1$.
In particular,
$$
\int_{\U}|\nabla \xi_m|^2\,dx \to \int_{\U}|\nabla \xi_\infty|^2\,dx\qquad \text{as }m\to \infty.
$$
Also, since $f_{j(m)}'(u_{j(m)})\xi_m^2$ are nonnegative and converge  pointwise to $f_{\infty}'(u_{\infty})\xi_\infty^2$ inside $\U \setminus \Sigma_\infty$ (and so a.e.), Fatou's Lemma implies that 
$$
\liminf_{m\to \infty}\int_{\U}f_{j(m)}'(u_{j(m)})\xi_m^2 \geq \int_{\U}f_{\infty}'(u_{\infty})\xi_\infty^2\,dx.
$$
Combining these two facts, we deduce that
$$
0\leq \limsup_{m\to \infty} {Q_{j(m)}(\xi_m)} \leq Q_{\infty}(\xi_\infty),
$$
a contradiction since $\xi_\infty \in X'\setminus \{0\}$.
Hence $Q_{j(m)}$ is strictly negative on $X'\setminus \{0\}$ for $m$ sufficiently large, which is impossible since ${\rm ind}(u_{j(m)},\U)\leq k$.
This contradiction proves that ${\rm ind}(u_{\infty},\U)\leq k$.

Finally, to prove that $u_\infty \in C^2(\U)$, we recall that finite Morse index solutions are locally stable (see for instance \cite[Proposition 1.5.1]{D2011} or \cite[Proposition 2.1]{DDF2011}).
Hence, we can apply Proposition~\ref{removable singularity} and elliptic regularity around each of the points in $\Sigma_\infty$ to deduce that $u_\infty \in C^2(\U)$.
\end{proof}

Our next goal is to show a uniform $W^{1,2}$ integrability estimate for  finite index solutions. It is for this result that the growth assumption on $f$ plays a crucial role. 
Before stating and proving it, we first recall the  following simple estimate that can be found, for instance, in \cite[Lemma A.1]{CFRS2020}.

\begin{lem}
\label{lem:Delta}
Let $f:\R \to \R$ be a nonnegative function, and let $v \in C^2$ solve $-\Delta v=f(v)$ inside $B_r(\bar x)$. Then
$$
\int_{B_{r/2}(\bar x)} f(v) \,dx \leq C_nr^{-2} \int_{B_{r}(\bar x)} |v|\,dx.
$$
\end{lem}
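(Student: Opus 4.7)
The plan is to test the equation $-\Delta v = f(v)$ against a suitably rescaled cutoff function and move all the derivatives onto the cutoff via integration by parts. Specifically, I would fix a smooth radial profile $\phi\in C^\infty_c([0,1))$ with $0\le \phi\le 1$ and $\phi\equiv 1$ on $[0,1/2]$, and then set
$$
\eta(x) := \phi\!\left(\frac{|x-\bar x|}{r}\right), \qquad x\in B_r(\bar x).
$$
By construction $\eta \in C^\infty_c(B_r(\bar x))$, $\eta \equiv 1$ on $B_{r/2}(\bar x)$, $0\le \eta \le 1$, and a direct chain-rule computation gives the scaling bound $\|\Delta\eta\|_{L^\infty}\le C_n r^{-2}$ for a dimensional constant $C_n$.

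Since $v\in C^2(B_r(\bar x))$ and $\eta$ is compactly supported in $B_r(\bar x)$, I would apply Green's identity twice, noting that all boundary terms vanish:
$$
\int_{B_r(\bar x)} f(v)\,\eta\,dx \;=\; \int_{B_r(\bar x)} (-\Delta v)\,\eta\,dx \;=\; \int_{B_r(\bar x)} \nabla v\cdot\nabla\eta\,dx \;=\; -\int_{B_r(\bar x)} v\,\Delta\eta\,dx.
$$
Now I would use the two sign conditions: since $f(v)\ge 0$ and $\eta\ge 0$ with $\eta\equiv 1$ on $B_{r/2}(\bar x)$, the left-hand side is bounded below by $\int_{B_{r/2}(\bar x)} f(v)\,dx$. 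On the right-hand side, trivially bounding by absolute values and inserting the Hessian estimate for $\eta$ yields
$$
-\int_{B_r(\bar x)} v\,\Delta\eta\,dx \;\le\; \|\Delta\eta\|_{L^\infty}\int_{B_r(\bar x)}|v|\,dx \;\le\; C_n r^{-2}\int_{B_r(\bar x)}|v|\,dx.
$$
Combining the two inequalities gives exactly the claim.

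There is no real obstacle here: the estimate is an elementary duality between the pointwise PDE and a fixed test function, and the only ingredient beyond linear integration by parts is the nonnegativity of $f$ (used to restrict the integration region from $B_r$ to $B_{r/2}$ after testing against $\eta$). In particular, no structural information about $f$ beyond $f\ge 0$ is needed, which is consistent with how this lemma is later applied in the paper to control local $L^1$ norms of $f(u)$ through local $L^1$ norms of the solution.
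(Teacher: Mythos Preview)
Your argument is correct and is exactly the standard proof of this elementary estimate. The paper itself does not supply a proof but simply cites \cite[Lemma~A.1]{CFRS2020}, where the same cutoff-and-integrate-by-parts argument is given.
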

 We begin by proving a uniform $W^{1,2}$ integrability estimate for stable solutions, that will be used below to address the general case.
 
\begin{prop}\label{W12 decay}
Let $f \in C^1$ be nonnegative, and let $u\in C^2$  be a nonnegative stable solution to $-\Delta u=f(u)$ in $B_{r}(\bar x)$ for some $r \in (0,1]$. Assume that $f$ satisfies \eqref{supercritical} for some $c_1>0$.  Then there exists  $\delta=\delta(n, \ez)>0$ such that 	\begin{equation}
	\label{no atom}
\int_{B_\rho(\bar x)} |\nabla u|^2\, dx\le C(c_1,n) \,\rho^\delta \qquad \text{for all }0<\rho<\frac r 4.
	\end{equation}
\end{prop}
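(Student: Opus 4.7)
The plan is to rescale so that the supercritical exponent $p:=\tfrac{n+2}{n-2}+\ez$ becomes the natural scaling exponent, and then close the estimate by combining Lemma~\ref{lem:Delta}, the supercritical bound \eqref{supercritical}, and the interior $W^{1,2}$ estimate for stable solutions from \cite{CFRS2020} (which holds in every dimension). Assuming $\bar x=0$, I would set
$$
v(y) := \rho^{2/(p-1)}\, u(\rho y), \qquad y\in B_{r/\rho}.
$$
A direct computation shows that $v$ is a nonnegative stable $C^2$ solution of $-\Delta v=\tilde f(v)$ on $B_{r/\rho}$ with $\tilde f(t) := \rho^{2+2/(p-1)}\, f(\rho^{-2/(p-1)}t) \geq c_1\, t^p$ for every $t\geq 0$, and
$$
\int_{B_\rho}|\nabla u|^2\, dx = \rho^\delta \int_{B_1}|\nabla v|^2\, dy, \qquad \delta := n-2-\tfrac{4}{p-1} > 0.
$$
Since $r/\rho > 4$, $v$ is stable on a ball containing $B_4$, so the proposition reduces to establishing the dimensionless bound $\int_{B_1}|\nabla v|^2 \leq C(n,c_1)$ for any such rescaled $v$.

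To set up the bootstrap, I would apply Lemma~\ref{lem:Delta} on $B_s$ together with the lower bound $\tilde f(v) \geq c_1 v^p$ and H\"older's inequality to obtain, for every $s$ with $B_s\subset B_{r/\rho}$,
$$
\int_{B_{s/2}} v \;\leq\; C(n,c_1)\, s^\beta \left(\int_{B_s} v\right)^{1/p}, \qquad \beta := \frac{n(p-1)-2}{p}>0.
$$
In the scale-invariant quantity $\phi(s) := s^{-\delta_1}\int_{B_s} v$ with $\delta_1 := n-\tfrac{2}{p-1} = \tfrac{\beta p}{p-1}>0$, this recasts as the contraction $\phi(s/2) \leq D\,\phi(s)^{1/p}$ with $D=D(n,c_1)$. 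Iterating inward from the outermost scale $s=2^K$ (with $K:=\lfloor\log_2(r/\rho)\rfloor$) down to $s=1$ yields
$$
\phi(1) \;\leq\; D^{p/(p-1)}\,\phi(2^K)^{1/p^K};
$$
the key observation is that the exponent $1/p^K$ is shrinking, so the dependence on the outermost value $\phi(2^K)$ is absorbed once $K$ is sufficiently large, which can be arranged by replacing $\rho$ by $\rho/M$ in Step~1 for a universal $M=M(n,c_1)$ chosen so that enough iterations fit inside $B_{r/\rho}$.

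The proof concludes by feeding the uniform bound $\phi(1)\leq C(n,c_1)$ into the interior $W^{1,2}$ estimate for stable solutions from \cite{CFRS2020} (roughly of the form $\int_{B_{1/2}}|\nabla v|^2 \leq C(n)(\int_{B_1} v)^2 = C(n)\,\phi(1)^2$), giving $\int_{B_{1/2}}|\nabla v|^2 \leq C(n,c_1)$, after which unwinding the scaling and a routine dyadic covering deliver $\int_{B_\rho(\bar x)}|\nabla u|^2 \leq C(n,c_1)\,\rho^\delta$ for all $\rho\in(0,r/4)$. The hard part I anticipate is precisely the last estimate of the iteration: although $\phi(s/2)\leq D\phi(s)^{1/p}$ is contractive toward the universal fixed point $\phi^\ast=D^{p/(p-1)}$, showing that $\phi(1)$ is actually bounded by $C(n,c_1)$ \emph{independently} of any $L^\infty$ or $L^1$ information on the original $u$ hinges on reconciling the a priori unboundedness of $\phi(2^K)$ with the decay factor $1/p^K$, and it is here that the self-improving power $1/p<1$ afforded by the supercritical growth \eqref{supercritical} and the dimension-free interior $W^{1,2}$ bound from \cite{CFRS2020} must be used in tandem.
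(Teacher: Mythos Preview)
Your rescaling and the one-step inequality $\phi(s/2)\le D\,\phi(s)^{1/p}$ are correct, and the final use of the CFRS interior $W^{1,2}$ estimate is exactly the right closing step. The gap is in the iteration itself. From $\phi(1)\le D^{p/(p-1)}\phi(2^K)^{1/p^K}$ you need a bound on the outermost value $\phi(2^K)$, and your own computation shows $\phi(2^K)\approx r^{-\delta_1}\int_{B_r}u$, which is \emph{independent of $\rho$} and carries no a priori universal control. For a fixed target radius (say $\rho=r/8$, so $K=3$) the factor $\phi(2^K)^{1/p^K}$ can be arbitrarily large, and your proposed remedy of ``replacing $\rho$ by $\rho/M$'' does not help: increasing $K$ by a universal amount $\log_2 M$ does not tame $\phi(2^K)^{1/p^K}$ unless $M$ is chosen large depending on $\int_{B_r}u$, i.e.\ depending on $u$. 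So the constant you produce is $C(n,c_1,u)$, not $C(n,c_1)$.

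The paper closes this loop by a different mechanism: instead of iterating from the largest scale downward, it passes to a maximal quantity over \emph{all} sub-balls,
\[
Q:=\sup_{z\in B_r,\ \rho\le r-|z|} G(z,\rho/2),\qquad G(z,\rho):=\max\Bigl\{1,\ \gamma\!\!\sup_{B_s(y)\subset B_\rho(z)} s^{-\frac{n+2}{2}-\delta}\!\int_{B_s(y)}u\Bigr\},
\]
which is finite a priori only because $u\in C^2$. At a near-maximizing ball $B_s(y)$ one covers $B_s(y)$ by $N_n$ balls of radius $s/4$ whose doubles still sit inside $B_r$, and then a \emph{single} application of the very inequality you derived (at the scale selected by the near-maximizer, not at the outermost scale) yields $G(y_k,s/4)\le 1+C_0\gamma^{\frac{4}{n-2}+\ez}Q$ and hence $Q/3\le N_n(1+C_0\gamma^{\frac{4}{n-2}+\ez}Q)$. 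Choosing $\gamma$ small (depending only on $n,c_1$) absorbs the $Q$ on the right and gives $Q\le C(n)$. This self-referential absorption is the missing idea: it replaces your dependence on an uncontrolled initial value by a bound of $Q$ in terms of itself. Once $Q\le C(n)$, the CFRS $W^{1,2}$ bound finishes exactly as you indicate.
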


\begin{proof}
With no loss of generality we can assume $\bar x=0$.

By H\"older inequality, \eqref{supercritical}, and Lemma~\ref{lem:Delta},
for any ball $B_{2\rho}(z)\subset B_r$ it holds
\begin{multline}
\label{eq:L1 bound}
	\bigg(  \rho^{-n} \int_{B_{\rho}(z)} u\, dx\bigg)^{\frac {n+2}{n-2}+\ez} \leq C(n) \rho^{-n} \int_{B_{\rho}(z)} u^{\frac {n+2}{n-2}+\ez}\, dx\\
	 \le C(n,c_1)   \rho^{-n} \int_{B_{\rho}(z)} f(u)\, dx 
	 \le C_0 \rho^{-2-n} \int_{B_{2\rho}(z)}  u \,dx,
\end{multline}
where $C_0=C_0(n,c_1)$.
Let $\delta=\delta(n,\ez)>0$ be small enough so that
\begin{equation}
\label{eq:delta}
\biggl(\frac{n-2 } {2}-\delta\biggr)\biggl(\frac {n+2}{n-2}+\ez\biggr)-2\geq \frac{n-2 } {2}-\delta,
\end{equation}
and define
$$
G(z,\rho):=\max\left\{1,\gamma\sup_{B_s(y)\subset B_\rho(z)}s^{-\frac{n+2 } {2}-\delta}\int_{B_{s}(y)} u\,dx\right\},
$$
where $\gamma \in (0,1)$ is a small constant to be fixed later.
Then, thanks to \eqref{eq:L1 bound} and \eqref{eq:delta}, whenever $B_{2\rho}(z)\subset B_r$ we have
\begin{multline}
\label{eq:G}
G(z,\rho)\leq G(z,\rho)^{\frac {n+2}{n-2}+\ez}\leq 1+ \gamma^{\frac {n+2}{n-2}+\ez}\sup_{B_s(y)\subset B_\rho(z)}\bigg(s^{-\frac{n+2 } {2}-\delta}\int_{B_{s}(y)} u\,dx\bigg)^{\frac {n+2}{n-2}+\ez}\\
\leq 1+ C_0\gamma^{\frac {n+2}{n-2}+\ez}\sup_{B_s(y)\subset B_\rho(z)}s^{-\frac{n+2 } {2}-\delta}\int_{B_{2s}(y)} u\,dx\leq 1+ C_0 \gamma^{\frac {4}{n-2}+\ez} G(z,2\rho).
\end{multline}
We now claim that $G(0,r/2)$ is uniformly bounded.

To show this, consider the quantity
$$
Q:=\sup_{z \in B_{r},\\ \rho \leq r-|z|} G(z,\rho/2).
$$
Note that, since $u$ is of class $C^2$, $Q$ is a finite constant. Also, we can assume that $Q>2$ (otherwise there is nothing to prove). 
Consider now $z \in B_{r}$ and $\rho \leq r-|z|$ such that $G(z,\rho/2)\geq Q/2$. Since $Q/2>1$, it follows from the definition of $G$ that there exists $B_s(y)\subset B_{\rho/2}(z)$ such that $\gamma s^{-\frac{n+2 } {2}-\delta}\int_{B_{s}(y)} u \geq Q/3$.
We can now cover $B_s(y)$ with $N_n$ balls $\{B_{s/4}(y_k)\}_{k=1}^{N_n}$ with $y_k \in B_s(y)\subset B_{\rho/2}(z)$, where $N_n$ is a dimensional constant, and observe that
\begin{multline}
\label{eq:Q yk}
\frac{Q}3 \leq \gamma s^{-\frac{n+2 } {2}-\delta}\int_{B_{s}(y)} u \,dx\leq  \gamma s^{-\frac{n+2 } {2}-\delta}\sum_{k=1}^{N_n}\int_{B_{s/4}(y_k)}u\,dx\\
 \leq \gamma(s/4)^{-\frac{n+2 } {2}-\delta}\sum_{k=1}^{N_n}\int_{B_{s/4}(y_k)}u\,dx\leq \sum_{k=1}^N G(y_k,s/4).
\end{multline}
Note now that, since $s \leq \rho/2$ and $y_k \in B_{\rho/2}(z)$,
$$
|y_k|+s \leq |z|+\frac{\rho}2+\frac{\rho}2 \leq |z|+\rho\leq r.
$$
In particular $B_{s/2}(y_k)\subset B_r$, and it follows by \eqref{eq:G} and the definition of $Q$ that
$$
G(y_k,s/4)\leq 1+C_0 \gamma^{\frac {4}{n-2}+\ez} G(y_k,s/2) \leq 1+C_0 \gamma^{\frac {4}{n-2}+\ez}Q.
$$
Combining this bound with \eqref{eq:Q yk}, this yields
$$
\frac{Q}3 \leq N_n\left(1+C_0 \gamma^{\frac {4}{n-2}+\ez}Q\right) \leq N_n\left(1+C_0 \gamma^{\frac {4}{n-2}}Q\right),
$$
and by choosing $\gamma$ small enough (depending only on $C_0$ and the dimension), we conclude that $Q \leq 4N_n$, and therefore
\begin{equation}
\label{eq:sup bound}
\gamma\sup_{s \leq r/2}s^{-\frac{n+2 } {2}-\delta}\int_{B_{s}(0)} u\,dx \leq G(0,r/2)\leq Q \leq 4N_n,
\end{equation}
as desired.

Recall now that, by \cite[Theorem 1.2]{CFRS2020}, if $u$ is stable on a ball $B$ then
$$
\|u\|_{W^{1,2}\left(\frac 1 2 B\right)}\le C(n)\, (\diam(B))^{-\frac{n+2}{2}}\|u\|_{L^1(B)}.
$$
Combining this estimate with \eqref{eq:sup bound},  we obtain \eqref{no atom}.\end{proof}

We next improve this result to solutions with finite Morse index.
\begin{prop}\label{uniform integrable}
Let $f \in C^1$ be nonnegative, and let $u\in C^2$  be a nonnegative solution to $-\Delta u=f(u)$ in $B_{r}(\bar x)$ for some $r>0$. 
Assume that ${\rm ind}(u,B_r(\bar x))\leq k$ for some $k \in \mathbb N$, and that $f$ satisfies \eqref{supercritical} for some $c_1>0$. 
Then 
$$
\int_{B_\rho(\bar x) } |\nabla u|^2\, dx\le C(c_1,n,\epsilon)\,k\, \rho^\delta\qquad \text{for all }\,\rho \in (0,r/4),
$$
where $\delta=\delta(n,\epsilon)>0$ is as in Proposition~\ref{W12 decay}.
\end{prop}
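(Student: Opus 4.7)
The plan is to cover $B_\rho(\bar x)$ by balls on each of which $u$ is stable, apply Proposition~\ref{W12 decay} on each of them, and use the Morse index bound to control the number of small balls that appear. First, I would set $s_z := \sup\{s > 0 : u \text{ is stable on } B_s(z)\}$ and $\Sigma := \{z \in B_r(\bar x) : s_z = 0\}$; applying Lemma~\ref{lem:Morse}(i) to small pairwise-disjoint balls around the points of $\Sigma$ yields $|\Sigma| \le k$, in particular $\Sigma$ has Lebesgue measure zero. For $z \in B_\rho(\bar x) \setminus \Sigma$ I would set $\tau_z := \min(s_z, \rho)/C_0$ with $C_0$ a large absolute constant (say $C_0 = 100$) chosen so that $5\tau_z < C_0\tau_z /4$, which is exactly the condition for Proposition~\ref{W12 decay} to apply on the stable ball $B_{C_0\tau_z}(z) \subset B_{s_z}(z)$ and give a bound on $B_{5\tau_z}(z)$. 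Extracting via the Vitali covering lemma a disjoint subfamily $\{B_{\tau_{z_i}}(z_i)\}$ with $\bigcup_i B_{5\tau_{z_i}}(z_i) \supset B_\rho(\bar x)\setminus\Sigma$, I obtain
$$\int_{B_\rho(\bar x)} |\nabla u|^2\,dx \le \sum_i \int_{B_{5\tau_{z_i}}(z_i)}|\nabla u|^2\,dx \le C(c_1,n)\sum_i \tau_{z_i}^\delta,$$
so the problem reduces to showing $\sum_i \tau_{z_i}^\delta \le C\,k\,\rho^\delta$.

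I would then split the indices dyadically into $\mathcal I_j := \{i : \tau_{z_i} \in [\rho 2^{-j-1}/C_0,\,\rho 2^{-j}/C_0)\}$ for $j \ge 0$. For $i \in \mathcal I_j$ with $j \ge 1$, the definition of $\tau_z$ forces $s_{z_i} < \rho 2^{-j}$, so $u$ is unstable on the ball $B_{\rho 2^{-j}}(z_i) \subset B_r(\bar x)$. Hence any $2\rho 2^{-j}$-separated subfamily of $\mathcal I_j$ has at most $k$ elements, otherwise Lemma~\ref{lem:Morse}(i) would be violated by the corresponding family of pairwise disjoint unstable balls. On the other hand, the Vitali disjointness of $\{B_{\tau_{z_i}}(z_i)\}$ forces the centres in $\mathcal I_j$ to be $\rho 2^{-j}/C_0$-separated, and a volumetric packing estimate then shows that each ball of radius $2\rho 2^{-j}$ contains at most a dimensional constant number of points of $\mathcal I_j$. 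Combining these two facts yields $|\mathcal I_j| \le C_n(k+1)$ uniformly in $j$, and therefore
$$\sum_i \tau_{z_i}^\delta = \sum_{j\ge 0}\sum_{i\in\mathcal I_j}\tau_{z_i}^\delta \le C_n(k+1)\,\rho^\delta\,C_0^{-\delta}\sum_{j\ge 0}2^{-j\delta} \le C(n,\epsilon)\,(k+1)\,\rho^\delta,$$
where the geometric series converges since $\delta = \delta(n,\epsilon) > 0$. Absorbing the $+1$ (or treating the stable case $k = 0$ directly via Proposition~\ref{W12 decay}) gives the bound claimed in the proposition.

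The main obstacle I anticipate is the two-scale bookkeeping inside each $\mathcal I_j$: one must simultaneously exploit the Vitali disjointness of $\{B_{\tau_{z_i}}(z_i)\}$ (providing a lower bound on centre separation of order $\rho 2^{-j}/C_0$) and the instability of $u$ on the much larger ball $B_{\rho 2^{-j}}(z_i)$ (providing an upper bound of $k$ on the cardinality of a $2\rho 2^{-j}$-separated subfamily). The fixed ratio $C_0$ between these two scales must be chosen once and for all, large enough for Proposition~\ref{W12 decay} to apply, but otherwise harmless, so that the final constant depends only on $n$ and $\epsilon$ through the exponent $\delta$. The normalization $r \in (0,1]$ in Proposition~\ref{W12 decay} should also be handled in passing, either by rescaling or by observing that the relevant estimate is scale-covariant.
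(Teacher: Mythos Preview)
Your argument is correct and reaches the same conclusion, but the route differs from the paper's. The paper proceeds by an \emph{iterative} Besicovitch covering: it covers $B_\rho(\bar x)$ by balls of radius $M^{-1}\rho$, splits them into $N_n'$ disjoint subfamilies, and uses Lemma~\ref{lem:Morse}(i) on each subfamily to conclude that all but at most $N_n'k$ of the doubled balls are stable; Proposition~\ref{W12 decay} handles the stable ones, and the remaining unstable balls form a set $Q^1$ that is covered anew at scale $M^{-2}\rho$, and so on. Summing over generations $j$ yields the geometric series $\sum_j C_n k\,M^{-j\delta}\rho^\delta$. Your approach instead performs a single Vitali covering with variable radii $\tau_z \sim \min(s_z,\rho)$ adapted to the local stability radius, and then sorts the selected balls dyadically by size; the counting step (a maximal $2\rho 2^{-j}$-separated subset of $\mathcal I_j$ has cardinality $\le k$ by Lemma~\ref{lem:Morse}(i), while each $2\rho 2^{-j}$-ball contains at most $C(n,C_0)$ Vitali centres) replaces the paper's iteration and gives the same bound $|\mathcal I_j|\le C_n k$ at each scale. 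Both arguments ultimately hinge on the same mechanism, that the Morse index bounds the number of unstable balls at every scale by $C_n k$, but yours avoids the inductive bookkeeping at the cost of the two-scale packing argument you correctly identify as the delicate point. The minor issues you flag (the normalisation $r\le 1$ in Proposition~\ref{W12 decay}, and the ``$+1$'' for $k=0$) are genuine but harmless, and indeed the paper's own proof glosses over the former.
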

\begin{proof}
Let $M >1$ be a fixed constant\footnote{One can choose $M$ to be any constant larger than $1$, for instance $M=2$. However, for notational convenience we prefer to use the notation $M$ instead of fixing its value, as we believe that the estimates become easier to follow.}, define the set $Q^0:=B_\rho(\bar x)$, and consider the covering of $Q^0$ given by 
$\left\{B_{M^{-1} \rho}(z)\right\}_{z\in Q^0}.$
By Besicovitch Covering Theorem, 
there exist a dimensional constant $N_n \in \mathbb N$ and a subfamily of balls $\left\{B^0_\ell\right\}_{\ell \in \mathcal I_0}\subset \left\{B_{ M^{-1} \rho}(z)\right\}_{z\in Q^0}$
such that 
 \begin{equation}
 \label{eq:finite overlap 1}
 1\le\sum_{\ell \in \mathcal I_0} \chi_{B^0_\ell}(y) \le N_n\qquad \text{for all }y \in Q^0.
\end{equation}
 In particular, since these balls have radius $M^{-1} \rho$ and are contained inside $B_{2\rho}(\bar x)$, it follows that
$$
 \# \mathcal I_0\,|B_{M^{-1} \rho}| \leq N_n |B_{2\rho}| \qquad \Rightarrow \qquad
  \# \mathcal I_0 \leq 2^n M^n N_n.
$$
 Moreover, since each point $y \in B_{\rho}(\bar x)$ is covered by at most $N_n$ balls of radius $M^{-1}\rho$, then the same is true if we double the radius of the balls: more precisely, there exists a dimensional constant $N'_n\in \mathbb N$ such that\footnote{A simple way to see this is to note that, as a consequence of \eqref{eq:finite overlap 1}, we can split $\left\{B^0_\ell\right\}_{\ell \in \mathcal I_0}$ into $N_n$ subfamilies of balls, where the balls of each subfamily are disjoint. This implies that the centers of the balls of each subfamily are at mutual distance at least $2M^{-1}\rho$. Then, if we double the radius, the overlapping for each of these subfamilies is bounded by a dimensional constant $C_n\geq 1$.}
 \begin{equation}
 \label{eq:finite overlap 1 2}
 1\le\sum_{\ell \in \mathcal I_0} \chi_{2B^0_\ell} (y)\le N'_n\qquad \forall\,y \in Q^0.
\end{equation}
Let us split $\left\{2B^0_\ell\right\}_{\ell \in \mathcal I_0}$ into $N'_n$ subfamilies of balls, where the balls of each subfamily are disjoint. 
As ${\rm ind}(u_s)\le k$, we can apply Lemma~\ref{lem:Morse}(i) to each subfamily. Then we deduce that, except for at most $N'_nk$ balls, say $B^0_1,\ldots,\,B^0_{k_0}$ with $k_0 \le N_n'k$, the function $u$ is stable inside each ball $\{2 B^0_\ell\}_{\ell \in\mathcal I_0 \setminus \{1,\ldots, k_0\}}$. Thus by Proposition~\ref{W12 decay}, we have
$$\int_{B^0_\ell} |\nabla u|^2\, dx \le C(c_1,n) M^{-\delta}  \rho^\delta
\qquad \forall\,\mathcal I_0 \setminus \{1,\ldots, k_0\}.
$$
Now, we consider the set
$Q^1:=\bigcup_{1\le \ell\le k_0} B^0_{\ell}$
and the covering $\left\{B_{M^{-2} \rho}(z)\right\}_{z\in Q^1}.$
Again by Besicovitch Covering Theorem, there exists a subfamily of balls $\left\{B^1_\ell\right\}_{\ell \in \mathcal I_1}\subset \left\{B_{ M^{-2} \rho}(z)\right\}_{z\in Q^1}$
such that 
 \begin{equation}
 \label{eq:finite overlap 2}
 1\le\sum_{\ell \in \mathcal I_1} \chi_{B^1_\ell}(y) \le N_n\qquad \text{for all }y \in Q^1.
\end{equation}
Also, since these balls are contained inside $\bigcup_{1\le \ell\le k_0} 2B^0_{\ell}$,
it follows that (recall that $k_0 \leq N_n'k$)
$$
 \# \mathcal I_1\,|B_{M^{-2} \rho}| \leq k_0 N_n |B_{2M^{-1} \rho}| \qquad \Rightarrow \qquad
  \# \mathcal I_1 \leq 2^n M^n k_0 N_n \leq 2^n M^n N_n' N_n k.
$$
Furthermore, as before, 
$$
 1\le\sum_{\ell \in \mathcal I_1} \chi_{2B^1_\ell} (y)\le N'_n\qquad \forall\,y \in Q^1.
 $$
Hence (up to renaming the indices) $u$ is stable inside each ball $\{2 B^1_\ell\}_{\ell \in \mathcal I_1 \setminus \{1,\ldots, k_1\}}$ with $k_1\le N_n'k$, and therefore
$$
\int_{B^1_\ell} |\nabla u|^2\, dx \le C(c_1,n) M^{-2\delta}  \rho^\delta
\qquad \forall\,\ell \in \mathcal I_1 \setminus \{1,\ldots, k_1\}.
$$ 
To continue this construction, define 
$$Q^2:=\bigcup_{1\le \ell\le k_2} B^2_{\ell}.$$
Then, we can apply the very same argument used for $Q^1$ to find a family of balls $\{B^2_\ell\}_{\ell \in \mathcal I_2}$, with $\#\mathcal I_2 \leq 2^n M^n k_1 N_n\leq 2^n M^n N_n' N_n k$, such that
$$
\int_{B^2_\ell} |\nabla u|^2\, dx \le C(c_1,n) M^{-3\delta}  \rho^\delta
\qquad \forall\,\ell \in \mathcal I_2 \setminus \{1,\ldots, k_2\},\qquad \text{with }k_2 \leq N_n'k.
$$ 
Iterating this construction, we obtain that the family of balls
$\{B^j_\ell\}_{\ell \in \mathcal I_j \setminus \{1,\ldots,k_j\},\,j \in \mathbb N}$ covers
$Q^0\setminus K$, with $K:=\cap_{j \in \mathbb N}Q^j$,\footnote{Here one could note that, since $u$ is smooth, every ball sufficiently small is stable and therefore $Q^j$ is empty for $j$ large enough, hence $K=\emptyset$. However this information is not needed, and this proof also applies to weak solutions with bounded index.}
and 
$$
\int_{B^j_\ell} |\nabla u|^2\, dx \le C(c_1,n) M^{-j\delta}  \rho^\delta
\quad \forall\,\ell \in \mathcal I_j \setminus \{1,\ldots, k_j\},\qquad \#\mathcal I_j \leq 2^n M^n N_n' N_n k,\qquad k_j \leq N_n'k.
$$
Since $K$ has measure zero (because $|Q^j|\leq k_j|B_{M^{-j}\rho}| \leq N_n' k |B_{M^{-j}\rho}| \to 0$ as $j\to\infty$), we have
\begin{multline*}
\int_{Q^0} |\nabla u|^2\, dx=\int_{Q^0\setminus K} |\nabla u|^2\, dx
 \le  \sum_{j=0}^\infty \sum_{\ell \in \mathcal I_j \setminus \{1,\ldots,k_j\}} \int_{B^j_\ell} |\nabla u|^2\, dx \\
\le C(c_1,n) \biggl(\sum_{j=0}^\infty \# \mathcal I_j  M^{-j\delta}\biggr)  \rho^\delta 
  \le C(c_1,n,\delta)\,k\,\rho^\delta.
\end{multline*} 
Recalling that $\delta=\delta(n,\epsilon),$ this concludes the lemma.
\end{proof}

The next result is a powerful $\varepsilon$-regularity theorem which shows the following: given $\gamma\in (0,1),$ if the $W^{1,2}$ norm of a solution in a ball $B_r$ decays like $r^{n-2+2\gamma}$ for $r \in [\varepsilon,1]$ with $\varepsilon$ small enough, then it decays all the way to the origin.

\begin{prop}
\label{prop:eps reg}
Let $g:\R\to \R$ be a continuous nonnegative function, let $u \in C^2(B_1)$ solve 
$-\Delta u=f(u)$ for some increasing function $f:\R\to \R$ of class $C^1$ satisfying $ 0\leq f \leq g$ and \eqref{F and f},
and assume that ${\rm ind}(u,B_1)\leq k$ and $\int_{B_1}|u|\leq M$. Then, for any $\gamma \in (0,1)$ there exists $m_0=m_0(n,g,\epsilon,t_0,k,M,\gamma) \in \mathbb N$ such that the following holds:\\
Suppose that
$$
\int_{B_r}|\nabla u|^2\,dx \leq r^{n-2+2\gamma} \qquad \forall\,r \in [2^{-m_0},1].
$$
Then
\begin{equation}
\label{eq:decay to 0}
\int_{B_r}|\nabla u|^2\,dx \leq r^{n-2+2\gamma} \qquad \forall\,r \in [0,1]
\end{equation}
and $|u(0)|\leq M+C(n,\gamma)$.
\end{prop}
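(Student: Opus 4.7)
I would argue by a harmonic-approximation $\varepsilon$-regularity scheme. The guiding idea is that the hypothesized decay exponent $n-2+2\gamma$ is \emph{strictly worse} than what a harmonic function enjoys on balls of small radius (which is $n$); hence, if the rescaled solution is sufficiently close in $C^{1,\alpha}$ to a harmonic function, the decay self-improves at the next scale. Iterating this improvement from scale $2^{-m_0}$ will propagate the estimate all the way down to $r=0$.

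\textbf{Step 1 (averages stay bounded).} Set $E(r):=r^{2-n}\int_{B_r}|\nabla u|^2$, so that the hypothesis reads $E(r)\le r^{2\gamma}$ on $[2^{-m_0},1]$. A Poincar\'e-telescope estimate gives
$$
\bigl|\bar u_{B_{r/2}}-\bar u_{B_r}\bigr|\le C_n E(r)^{1/2}\le C_n r^{\gamma},
$$
and summing the geometric series between $r=1$ and $r=2^{-m_0}$, together with $|\bar u_{B_1}|\le C_n M$, yields
$$
|\bar u_{B_r}|\le C_n M+C(n,\gamma)\qquad\text{uniformly for }r\in[2^{-m_0},1].
$$

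\textbf{Step 2 (one-step decay by compactness).} I claim the existence of $\tau=\tau(n,\gamma)\in(0,1/2)$ and $\eta_0=\eta_0(n,g,M,\gamma)>0$ such that, whenever $r\le \eta_0$ satisfies $|\bar u_{B_r}|\le C_nM+C(n,\gamma)$ and $E(r)\le r^{2\gamma}$, one has $E(\tau r)\le (\tau r)^{2\gamma}$. The proof will be by contradiction: assume sequences $u_j,\,f_j,\,r_j\to 0$ realizing the hypotheses but with $E_j(\tau r_j)>(\tau r_j)^{2\gamma}$, and consider the normalized rescalings
$$
v_j(y):=r_j^{-\gamma}\bigl(u_j(r_j y)-c_j\bigr),\qquad y\in B_1,
$$
where $c_j$ denotes the average of $u_j$ on $B_{r_j}$. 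Then $\int_{B_1}|\nabla v_j|^2\le 1$, $\int_{B_1}v_j=0$, and $\int_{B_\tau}|\nabla v_j|^2>\tau^{n-2+2\gamma}$. The rescaled equation reads
$$
-\Delta v_j(y)=r_j^{2-\gamma}f_j\bigl(r_j^\gamma v_j(y)+c_j\bigr)\qquad\text{in }B_1.
$$
Since the $c_j$'s are bounded by Step 1, $r_j^\gamma v_j$ is small in $L^p_{\rm loc}$ by Sobolev--Poincar\'e, and $0\le f_j\le g$ with $g$ continuous, the right-hand side is uniformly bounded in $L^\infty$ and tends to zero pointwise thanks to the factor $r_j^{2-\gamma}\to 0$ (which uses $\gamma<1$). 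Elliptic $W^{2,p}$ regularity then extracts a subsequence $v_j\to v_\infty$ in $C^{1,\alpha}_{\rm loc}(B_1)$ with $v_\infty$ harmonic and $\int_{B_1}|\nabla v_\infty|^2\le 1$. Interior estimates for harmonic functions give $\int_{B_\tau}|\nabla v_\infty|^2\le C_n\tau^n$, which is strictly less than $\tau^{n-2+2\gamma}$ once $\tau$ is chosen small enough in terms of $(n,\gamma)$; passing to the limit contradicts the assumed lower bound.

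\textbf{Step 3 (iteration and $L^\infty$ bound).} Choosing $m_0$ large enough that $2^{-m_0}\le\eta_0$, I will apply Step 2 inductively at the geometric scales $\tau^k\cdot 2^{-m_0}$, with the telescope of Step 1 continuing to supply the needed control on $\bar u_{B_r}$ at each step. Monotonicity of $r\mapsto\int_{B_r}|\nabla u|^2$ fills in intermediate scales up to a multiplicative constant depending on $(n,\gamma,\tau)$; this loss can be absorbed by running Step 2 with a slightly larger exponent $\gamma'\in(\gamma,1)$, so that $Cr^{n-2+2\gamma'}\le r^{n-2+2\gamma}$ on the relevant range. The Campanato characterization of H\"older continuity, combined with $\int_{B_1}|u|\le M$, finally translates the pointwise decay of $E$ at $0$ into $|u(0)|\le M+C(n,\gamma)$.

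\textbf{Main obstacle.} The crux is the joint propagation of the decay and of the bound on $\bar u_{B_r}$: the smallness of the rescaled nonlinearity $r^{2-\gamma}f(r^\gamma v+c)$ relies on $c$ remaining in a uniformly bounded range, but that bound is obtained from the telescope which itself uses the decay one is trying to establish. Circularity is avoided by running Step 2 as an inductive self-improvement scheme, in which the decay gained at scale $\tau^k\cdot 2^{-m_0}$ is immediately fed back into the telescope before invoking the compactness step at scale $\tau^{k+1}\cdot 2^{-m_0}$.
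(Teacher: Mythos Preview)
Your Step~2 contains a genuine gap. You assert that the rescaled right-hand side
\[
r_j^{2-\gamma}\,f_j\bigl(r_j^\gamma v_j(y)+c_j\bigr)
\]
is uniformly bounded in $L^\infty$, and then invoke $W^{2,p}$ estimates to extract a $C^{1,\alpha}_{\rm loc}$ convergent subsequence. But the only a~priori control you have on $v_j$ is a $W^{1,2}$ bound; Sobolev--Poincar\'e gives at best $L^{2^*}$, not $L^\infty$. Since $g$ may grow supercritically (indeed, by \eqref{F and f} and Remark~\ref{rmk:superlin} one has $f_j(t)\gtrsim t^{\frac{n+2}{n-2}+\epsilon}$), the quantity $r_j^{2-\gamma}g(c_j+r_j^\gamma v_j(y))$ is not uniformly bounded at points where $v_j(y)$ is large, and the factor $r_j^{2-\gamma}$ does not save you. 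In particular there is no $W^{2,p}$ estimate available, and the claimed strong $C^{1,\alpha}$ (or even strong $W^{1,2}$) compactness fails. Without strong convergence of $\nabla v_j$, the lower bound $\int_{B_\tau}|\nabla v_j|^2>\tau^{n-2+2\gamma}$ cannot be passed to the limit, since weak lower semicontinuity goes the wrong way.

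This is not a technicality that can be patched: as written, your argument uses neither the Morse index bound nor the supercriticality hypothesis \eqref{F and f}, yet both are essential---the critical-exponent family \eqref{critical case} shows that the conclusion is false without them. The paper's proof confronts exactly this loss of compactness. First, the finite Morse index is used (via Proposition~\ref{prop:stability}, which in turn rests on the stable regularity theory of \cite{CFRS2020}) to obtain $C^2_{\rm loc}$ convergence of the rescalings outside a singular set $\Sigma_\infty$ of at most $k$ points; the limit is then shown to be identically zero by Liouville. Second, since the convergence of $\nabla w_j$ is still only weak in $W^{1,2}$ near $\Sigma_\infty$, the contradiction with the lower bound on $\int_{B_1}|\nabla w_j|^2$ is obtained by a localized Pohozaev argument: one chooses an annulus disjoint from $\Sigma_\infty$, tests the equation against $w_j\varphi$ and $(x\cdot\nabla w_j)\varphi$, and combines the two identities using \eqref{F and f} to force $\int|\nabla w_j|^2\varphi\to 0$. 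Both ingredients---the Morse-index-based partial regularity and the Pohozaev identity exploiting supercriticality---are indispensable, and neither appears in your scheme.
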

\begin{proof}
We begin with the proof of \eqref{eq:decay to 0}.
For that, it suffices to prove the following implication: if
$$
\int_{B_r}|\nabla u|^2\,dx \leq r^{n-2+2\gamma} \qquad \forall\,r \in [2^{-m},1]
$$
for some $m \geq m_0$, then
$$
\int_{B_r}|\nabla u|^2\,dx \leq r^{n-2+2\gamma} \qquad \forall\,r \in [2^{-(m+1)},1].
$$
Indeed, iterating this result with $m=m_0,m_0+1,\ldots$, the result follows.

To prove the implication above, we argue by contradiction. If it was false, we could find a sequence of functions $u_j \in C^2(B_1)$, and $f_j \in C^1(\R)$ satisfying \eqref{F and f}, such that
$$
-\Delta u_j=f_j(u_j),\qquad \text{$f_j$ increasing},\qquad 0 \leq f_j \leq g,\qquad {\rm ind}(u_j,B_1)\leq k, \qquad \int_{B_1}|u_j|\leq M,
$$
and a sequence $m_j \to \infty$, such that
\begin{equation}
\label{eq:decay m}
\int_{B_r}|\nabla u_j|^2\,dx \leq r^{n-2+2\gamma} \qquad \forall\,r \in [2^{-m_j},1]
\end{equation}
but
\begin{equation}
\label{eq:decay m+1 fails}
\int_{B_{r_j}}|\nabla u_j|^2\,dx \geq r^{n-2+2\gamma} \qquad \text{for some }r_j \in [2^{-(m_j+1)},2^{-m_j}].
\end{equation}
We introduce the notation $A_r:=B_{r}\setminus B_{r/2}$.

We first note that, as a consequence of \eqref{eq:decay m} and the bound $\int_{B_1}|u_j|\leq M$, it follows that 
\begin{equation}
\label{eq:bound L1}
\bint_{A_{2^{-(m_j+1)}}}|u_j|\,dx \leq M+C(n,\gamma).
\end{equation}
Indeed, thanks to \eqref{eq:decay m}, for any $2^{-m_j}\leq s\leq r \leq 1$ we have
\begin{equation}
\label{eq:L1 annuli}
\begin{split}
\bigg| \bint_{\partial B_r}|u_j| - \bint_{\partial B_s}|u_j|\biggr|&\leq 
\bint_{\partial B_1}|u_j(ry) - u_j(sy)|\,dy \leq \int_s^r \biggl(\bint_{\partial B_1}|\nabla u_j(\tau y)|\,dy\biggr)\,d\tau\\
&=\int_{B_{r}\setminus B_s}\frac{|\nabla u_j(x)|}{|x|^{n-1}}\,dx\leq \sum_{\ell=1}^{m_j}
\int_{A_{2^{-\ell}}} \frac{|\nabla u_j(x)|}{|x|^{n-1}}\,dx\leq \sum_{\ell=1}^{m_j}
2^{\ell(n-1)}\int_{A_{2^{-\ell}}} |\nabla u_j|\,dx \\
&=C(n)\sum_{\ell=1}^{m_j}
2^{-\ell}\bint_{A_{2^{-\ell}}} |\nabla u_j|\,dx\leq C(n)\sum_{\ell=1}^{m_j}
2^{-\ell}\biggl(\bint_{A_{2^{-\ell}}} |\nabla u_j|^2\,dx\biggr)^{1/2}\\
&\leq C(n)\sum_{\ell=1}^{m_j}
2^{-\ell}2^{-\ell(\gamma -1)}=C(n)\sum_{\ell=1}^{m_j}2^{-\ell\gamma}\leq C(n,\gamma),
\end{split}
\end{equation}
therefore
$$
\biggl| \bint_{A_1}|u_j|\,dx-\bint_{A_{2^{-(m_j+1)}}}|u_j|\,dx\biggr| \leq C(n,\gamma),
$$
and \eqref{eq:bound L1} follows.

To simplify the notation, we set $r_j:=2^{-m_j}$, and we define
$$
a_j:=\bint_{A_{2r_j}}u_j\,dx,\qquad w_j(x):=r_j^{-\gamma}[u_j(r_jx)-a_j],
$$
so that 
\begin{equation}
	\label{eq:wj}
	\bint_{A_2}w_j=0,\qquad -\Delta w_j=h_j(w_j),\qquad h_j(t):=r_j^{2-\gamma}f_j(a_j+r_j^\gamma t).
\end{equation}
Note that ${\rm ind}(w_j,B_{2^{m_j}})\leq k$ and $0 \leq h_j \leq r_j^{2-\gamma}g(a_j+r_j^\gamma t),$ so it follows from \eqref{eq:bound L1} that $h_j \to 0$ in $C^1_{\rm loc}$.
Also \eqref{eq:decay m} and \eqref{eq:decay m+1 fails} imply that
\begin{equation}
	\label{eq:growth wj}
\int_{B_{2^\ell}}|\nabla w_j|^2 dx \leq 2^{\ell(n-2+2 \gamma)} \qquad \forall\,0 \leq \ell \leq m_j
\end{equation}
and
\begin{equation}
	\label{eq:lower wj}
\int_{B_{1}}|\nabla w_j|^2 dx \geq 2^{-(n-2+2 \gamma)}.
\end{equation}
Thus, thanks to Proposition~\ref{prop:stability} and a diagonal argument we deduce that, up to a subsequence,
$$
\text{$w_{j}\rightharpoonup w_\infty$ in $W^{1,2}_{\rm loc}(\R^n)$},\qquad \text{$w_{j} \to w_\infty$ in $C^2_{\rm loc}(\R^n\setminus \Sigma_\infty)$,}
$$
where $\Sigma_\infty$ has cardinality at most $k$, and $w_\infty$ satisfies (by \eqref{eq:growth wj} and \eqref{eq:wj})
$$
-\Delta w_\infty= 0 \quad \text{in } \R^n,\qquad \bint_{A_2}w_\infty=0,\qquad \int_{B_{2^\ell}}|\nabla w_\infty|^2 dx \leq 2^{\ell(n-2+2 \gamma)} \qquad \forall\,\ell \geq 0.
$$
Then it follows from Liouville Theorem for harmonic functions that $w_\infty\equiv 0$,\footnote{Indeed, by Liouville Theorem $w_\infty$ must be a harmonic polynomial, and the  bound $\int_{B_{2^\ell}}|\nabla w_\infty|^2 dx \leq 2^{\ell(n-2+2 \gamma)}$ for $\ell \geq 0$ implies that $w_\infty$ must be constant (recall that $\gamma<1$). Finally, since $\int_{A_2}w_\infty=0$ we deduce that $w_\infty \equiv 0$.} and therefore
$$
\text{$w_{j}\rightharpoonup 0$ in $W^{1,2}_{\rm loc}(\R^n)$},\qquad \text{$w_{j} \to 0$ in $C^2_{\rm loc}(\R^n\setminus \Sigma_\infty)$,}\qquad \#\Sigma_\infty \leq k.
$$
We now want to get a contradiction with \eqref{eq:lower wj}.

Consider the annuli
$$
B_2\setminus B_1,\qquad B_3\setminus B_2,\qquad \ldots,\qquad B_{k+2}\setminus B_{k+1}.
$$
Since $\#\Sigma_\infty \leq k$, there exists $\hat i \in \{1,\ldots,k+1\}$ such that $(B_{\hat i+1}\setminus B_{\hat i})\cap \Sigma_\infty =\emptyset.$ In particular, if we fix $\varphi\in C^\infty_c(B_{\hat i+3/4})$ nonnegative such that $\varphi|_{B_{\hat i+1/4}}=1$, then
$w_j \to 0$ in $C^2$ on $\{\nabla \varphi\neq 0\}.$

Now we  first test the equation for $w_j$ (see \eqref{eq:wj}) with $w_j\varphi$ to get 
\begin{equation}
 \label{ws f}
\begin{split}
r_j^{2-\gamma}  \int_{B_{\hat i+1}}f_j\bigl(a_j+r_j^{\gamma}   w_j\bigr) w_j\varphi \, dx
&= \int_{B_{\hat i+1}} -w_j\Delta w_j  \varphi\, dx  \\
&=\int_{B_{\hat i+1}}|\nabla w_j|^2\varphi +w_j\nabla w_j\cdot \nabla\varphi  \, dx \\&=\int_{B_{\hat i+1}}|\nabla w_j|^2\varphi \, dx +o(1), 
\end{split}
\end{equation}
where $o(1)$ denotes a quantity that goes to $0$ as $j\to \infty$, and the last equality follows from the $C^2$ convergence of $w_j$ to $0$ on the set $\{\nabla \varphi\neq 0\}.$

Similarly, testing \eqref{eq:wj} with
$(\nabla w_j\cdot x)\varphi$, we  obtain
\begin{align*}
\int_{B_{\hat i+1}}  r_j^{2-\gamma}  f_j\bigl(a_j+r_j^{\gamma}   w_j\bigr) (\nabla w_j\cdot x) \varphi \, dx
&=\int_{B_{\hat i+1}}-\Delta w_j (\nabla w_j\cdot x) \varphi\, dx\\
&=\int_{B_{\hat i+1}} D^2 w_j \nabla w_j\cdot x\varphi+ |\nabla w_j|^2\varphi +(x\cdot \nabla w_j) \nabla w_j\cdot \nabla\varphi \, dx\\
&=\left(1-\frac n 2\right)\int_{B_{\hat i+1}} |\nabla w_j|^2\varphi\, dx -\frac 1 2 \int_{B_{\hat i+1}} |\nabla w_j|^2 \nabla\varphi\cdot x\, dx+o(1)\\
&= \left(1-\frac n 2\right)\int_{B_{\hat i+1}} |\nabla w_j|^2\varphi\, dx+o(1).
\end{align*}
Also, if we define $F_j(t):=\int_{0}^t f_j(\tau)\, d\tau,$ then we can rewrite the first term above as follows:
\begin{align*}
&\int_{B_{\hat i+1}} r_j^{2-\gamma}   f_j\left(a_j+r_j^{\gamma}    w_j\right) (\nabla w_j\cdot x) \varphi\, dx\\
&= \int_{B_{\hat i+1}} r_j^{2- 2\gamma } \nabla \left[F_j\left(a_j+r_j^{\gamma}    w_j\right)-F_j(a_j)\right]  \cdot x \,\varphi \, dx\\
&=-n\int_{B_{\hat i+1}} r_j^{2-2 \gamma } \left[ F_j\left(a_j+r_j^{\gamma}    w_j\right)  -F_j(a_j)\right]  \varphi \, dx +\int_{B_{\hat i+1}} r_j^{2- \gamma }  \left[ F_j\left(a_j+r_j^{\gamma}    w_j\right)  -F_j(a_j)\right] x\cdot \nabla\varphi\, dx\\
&=-n\int_{B_{\hat i+1}} r_j^{2-2 \gamma } \left[ F_j\left(a_j+r_j^{\gamma}    w_j\right)  -F_j(a_j)\right]  \varphi \, dx +o(1),
\end{align*}
and we eventually get
\begin{equation}\label{ws F}
 r_j^{2- 2\gamma }\int_{B_{\hat i+1}} \left[ F_j\left(a_j+r_j^{\gamma}    w_j\right)  -F_j(a_j)\right]  \varphi \, dx=\frac{n-2}{2n} \int_{B_{\hat i+1}} |\nabla w_j|^2\varphi\, dx+o(1). 
\end{equation}
Now, given a constant $N>0$, we define the set
$$S_N:=\left\{x\in B_{\hat i+1}\colon r_j^{\gamma}  |w_j(x)|\le N\right\}.$$
Since $w_j$ is uniformly bounded in $W^{1,\,2}(B_{\hat i+1})$ and $a_j$ is uniformly bounded,
for any $N>0$ fixed we have 
 $$r_j^{2- \gamma }\int_{ S_N} \left[ F_j\left(a_j+r_j^{\gamma}    w_j\right)  -F_j(a_j)\right]  \varphi \, dx\to 0,\qquad r_j^{2- \frac \gamma 2 }\int_{S_N}   f_j\left(a_j+r_j^{\gamma}    w_j\right)  w_j \varphi \, dx\to 0,$$
 so it follows from \eqref{ws f} and \eqref{ws F} that
\begin{equation}\label{ws f2}
r_j^{2-\gamma}  \int_{B_{\hat i+1}\setminus S_N} f_j\bigl(a_j+r_j^{\gamma}   w_j\bigr) w_j\varphi \, dx
= \int_{B_{\hat i+1}}|\nabla w_j|^2\varphi \, dx+o(1)
\end{equation}
and
\begin{equation}\label{ws F2}
 r_j^{2- 2\gamma }\int_{B_{\hat i+1}\setminus S_N} \left[ F_j\left(a_j+r_j^{\gamma}    w_j\right)  -F_j(a_j)\right]  \varphi \, dx=\frac{n-2}{2n} \int_{B_{\hat i+1}} |\nabla w_j|^2\varphi\, dx+o(1). 
\end{equation}
Note now that, thanks to \eqref{F and f}, the fact that $a_j$ is uniformly bounded (see \eqref{eq:bound L1}), and that $0 \leq f_j \leq g$, there exists a large constant $N=N(M,n,\gamma,t_0,\ez)$ such that, for all $j$,
$$\left(\frac{2n}{n-2} +\frac \ez 2\right)\bigl[F_j(t+a_j)-F_j(a_j)\bigr]\leq  f_j(t+a_j) t \qquad \forall\, t \geq N.$$
Combining this inequality with \eqref{ws f2} and \eqref{ws F2}, we get
$$
\left(\frac{2n}{n-2} +\frac \ez 2\right)\frac{n-2}{2n}\int_{B_{\hat i+1}}|\nabla w_j|^2\varphi\leq \int_{B_{\hat i+1}}|\nabla w_j|^2\varphi
+o(1),
$$
or equivalently
$$
\frac{(n-2)\ez}{4n}\int_{B_{\hat i+1}}|\nabla w_j|^2\varphi\leq o(1).
$$
This contradicts \eqref{eq:lower wj} and concludes the proof of \eqref{eq:decay to 0}.

Now, to prove that bound on $|u(0)|,$ we observe that \eqref{eq:decay to 0} allows us to deduce the validity of
\eqref{eq:L1 annuli} for all $0\leq s \leq r \leq 1.$
In particular this implies that 
$$
\biggl| \bint_{A_1}|u_j|\,dx-|u(0)|\biggr| \leq C(n,\gamma),
$$
so $|u(0)|\leq M+C(n,\gamma)$ as desired.
\end{proof}

Finally, we conclude this section with a useful consequence of the moving plane method.

\begin{lem}\label{stable outside K}
Let $\Omega\subset \R^n$ be a bounded convex domain, and let $u \in C^2(\Omega)$ solve 
\eqref{equ} for some increasing positive function $f:\R\to \R$ of class $C^1$.
Then  there exists $\rho_0=\rho_0(\Omega) \in (0,1)$ such that 
$$
\max_{\Omega}u=\max_{\Omega_0}u,
$$
where
$\Omega_0:=\left\{x\in \Omega \colon \dist(x,\,\partial \Omega)> \rho_0 \right\}.$
\end{lem}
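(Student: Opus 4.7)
The plan is to apply the moving-plane method of Gidas--Ni--Nirenberg in the form adapted to general (not necessarily smooth) convex domains by Berestycki--Nirenberg and de Figueiredo--Lions--Nussbaum. The outcome I will aim for is that, inside a boundary strip of uniform width $\rho_0=\rho_0(\Omega)$, the function $u$ is strictly monotone along any inward direction transverse to a supporting hyperplane, ruling out interior maxima in that strip.

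I would start by fixing a boundary point $x_0\in\partial\Omega$ and selecting, by convexity, a unit vector $\nu$ for which $\{x\colon(x-x_0)\cdot\nu=0\}$ is a supporting hyperplane of $\Omega$ at $x_0$ with $\Omega\subset\{(x-x_0)\cdot\nu\ge 0\}$. For $\lambda>0$ introduce
$$
\Sigma_\lambda:=\{x\in\Omega\colon(x-x_0)\cdot\nu<\lambda\},\qquad T_\lambda(x):=x+2\bigl(\lambda-(x-x_0)\cdot\nu\bigr)\nu,
$$
so that $T_\lambda$ is the reflection across $\{(x-x_0)\cdot\nu=\lambda\}$. A purely geometric argument, using only the convexity of $\Omega$, produces a constant $\rho_0=\rho_0(\Omega)\in(0,1)$ such that $T_\lambda(\Sigma_\lambda)\subset\Omega$ for every admissible $x_0$ and $\nu$ and every $\lambda\in(0,\rho_0]$. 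Writing $u^\lambda:=u\circ T_\lambda$ and $w_\lambda:=u^\lambda-u$, one checks that $w_\lambda=0$ on the flat face of $\partial\Sigma_\lambda$, while on $\partial\Sigma_\lambda\cap\partial\Omega$ the Dirichlet condition together with the positivity of $u$ gives $w_\lambda=u^\lambda\ge 0$. Moreover, since $f\in C^1$,
$$
-\Delta w_\lambda=c_\lambda(x)\,w_\lambda\ \text{in }\Sigma_\lambda,\qquad c_\lambda(x):=\int_0^1 f'(u+tw_\lambda)\,dt\ge 0.
$$

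The next step is to run the comparison. The weak maximum principle for $-\Delta-c_\lambda$ on the narrow cap $\Sigma_\lambda$ yields $w_\lambda\ge 0$ in $\Sigma_\lambda$, after which the strong maximum principle and Hopf's boundary-point lemma upgrade this to $w_\lambda>0$ in $\Sigma_\lambda$ and to $\nu\cdot\nabla u(x_0+s\nu)>0$ for every $s\in(0,\rho_0)$. Varying $x_0\in\partial\Omega$ and noting that every point within distance $\rho_0$ of $\partial\Omega$ lies on such an inward ray from some supporting hyperplane, $u$ is strictly monotone along these rays in the open strip $\{x\in\Omega\colon\dist(x,\partial\Omega)<\rho_0\}$, so its maximum on $\overline\Omega$ must be attained inside $\Omega_0$.

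The hardest point is making sure that the width $\rho_0$ depends only on the geometry of $\Omega$ and not on $\|f'(u)\|_{L^\infty}$, since the narrow-domain maximum principle customarily requires $|\Sigma_\lambda|^{2/n}\|c_\lambda\|_\infty$ to be small. I plan to handle this by the Aleksandrov--Bakelman--Pucci estimate on thin slabs, combined with the sign $c_\lambda\ge 0$ (since $f$ is increasing): the $L^n$ contribution of $c_\lambda$ is controlled through $|\Sigma_\lambda|\le C(\Omega)\rho_0$, allowing one to absorb the $\|f'(u)\|_\infty$ dependence into a purely geometric constant. This is the classical mechanism that makes the moving-plane technique work on convex domains without any smoothness assumption on $\partial\Omega$, and produces the desired $\rho_0=\rho_0(\Omega)$.
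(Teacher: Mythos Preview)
Your approach via moving planes is the same as the paper's (which simply invokes the classical method and refers to a footnote in \cite{CFRS2020}), and most of your outline is correct. There is, however, a genuine gap in your last paragraph.

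The difficulty you flag is real: the narrow-domain (or ABP) maximum principle for $-\Delta-c_\lambda$ with $c_\lambda\ge 0$ does \emph{not} become sign-insensitive. The potential enters with the ``bad'' sign, so the smallness condition for the maximum principle on a slab is of the form $|\Sigma_\lambda|^{2/n}\|c_\lambda\|_{L^\infty}\le c_n$ (or its $L^n$ analogue), and the nonnegativity of $c_\lambda$ does not let you ``absorb $\|f'(u)\|_\infty$ into a purely geometric constant.'' Indeed, the first Dirichlet eigenfunction $\phi>0$ on $\Sigma_\lambda$ gives $w=-\phi$ solving $-\Delta w=\lambda_1 w$ with $w=0$ on $\partial\Sigma_\lambda$ and $w<0$ inside, so $c\ge 0$ alone cannot force $w\ge 0$.

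The correct mechanism is the sliding/continuation step you omitted. The constant $\rho_0(\Omega)$ is purely geometric: it is any depth for which, for every boundary point and every supporting direction, the reflected cap $T_\lambda(\Sigma_\lambda)$ remains in $\Omega$ for all $\lambda\in(0,\rho_0]$; convexity guarantees such a $\rho_0$ exists and depends only on $\Omega$. For the fixed solution $u$, the narrow-domain principle only \emph{starts} the process at some tiny $\lambda_0=\lambda_0(u)>0$, giving $w_\lambda\ge 0$ for $0<\lambda\le\lambda_0$. One then shows that $\{\lambda\in(0,\rho_0]:w_\mu\ge 0\text{ in }\Sigma_\mu\text{ for all }\mu\le\lambda\}$ is both open and closed in $(0,\rho_0]$: closedness is immediate, and once $w_{\lambda}\ge 0$ one has $-\Delta w_\lambda=c_\lambda w_\lambda\ge 0$, so the strong maximum principle and Hopf's lemma (using that $w_\lambda\not\equiv 0$ since $u>0$ inside) give $w_\lambda>0$ in $\Sigma_\lambda$ and strict normal derivative on the flat face, which yields openness by the standard compactness argument. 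Hence $w_\lambda\ge 0$ for all $\lambda\in(0,\rho_0]$, and the monotonicity in the boundary strip of width $\rho_0(\Omega)$ follows---with $\rho_0$ depending only on $\Omega$ even though the initiating step depended on $u$.
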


\begin{proof}
Recall that, since $f \geq 0$, the maximum principle implies that $u >0$ (unless $u\equiv 0$, in which case the result is trivially true).
Then, since $\Omega$ is bounded and convex, the result follows by the classical moving plane method (see also the footnote inside \cite[Proof of Corollary 1.4]{CFRS2020} for more details).
\end{proof}

\section{Uniform finite Morse index:  Proof of Theorem~\ref{mainthm 1}}

Let us assume, by contradiction, that there exists a sequence of $C^2$ solutions $u_j$ 
$$
\left\{
\begin{array}{ll}
-\Delta u_j= \lambda_j f_j(u_j) &\text{ in } \Omega,\\
 u_j=0 &\text{ on } \partial\Omega,
 \end{array}
 \right.
$$
where ${\rm ind}(u_j,\Omega)\leq k$, the functions $f_j:\R\to \R$ satisfy all the 
assumptions in the statement of the theorem, $0\leq \lambda_j\leq \hat\lambda,$ but $\|u_j\|_{L^\infty(\Omega)}\to \infty$ as $j\to \infty$. 
Since $f_j \in \mathcal K$ which is a compact family, up to a subsequence $f_j \to f_\infty$ in $C^1_{\rm loc}(\R)$
and $\lambda_j \to \lambda_\infty \in [0,\hat\lambda]$.
Define
\begin{equation}
\label{eq:def g}
\hat{f}(t):=\sup_{j}f_j(t)\qquad \forall\,t \in \R,
\end{equation}
so that $0 \leq f_j \leq \hat{f}$ for all $j$. Since the functions $f_j$ are locally uniformly Lipschitz (by the $C^1_{\rm loc}$ compactness), $\hat{f}$ is a continuous function.

We distinguish two cases, depending on the value of $\lambda_\infty$.

\subsection{The case $\lambda_\infty>0$}
Since $\lambda_j \to \lambda_\infty$, it follows from Remark~\ref{rmk:superlin} that, for $j$ large enough,
\begin{equation}\label{supercritical j}
  \lambda_j f_j(t)\geq c_1 t^{\frac{n+2}{n-2}+\ez}\qquad \forall\,t \geq 0,\qquad \text{for some }c_1>0.
  \end{equation}
Thanks to this bound, it follows from \cite[Proposition B.1]{CFRS2020} that
\begin{equation}
\label{eq:bound L1 j}
\|u_j\|_{L^1(\Omega)}\leq C_0=C_0(c_1,\Omega).
\end{equation}
Also, if we define $\Omega_\tau:=\left\{x\in \Omega \colon \dist(x,\,\partial \Omega)> \tau\right\}$, then  \eqref{supercritical j} and Proposition~\ref{uniform integrable} yield
$$
\|\nabla u_j\|_{L^{2}(\Omega_\tau)}\leq C_1=C_1(c_1,\Omega,\rho) \qquad \forall\,\tau>0.
$$
Since $\tau>0$ is arbitrary, Proposition~\ref{prop:stability} and a diagonal argument imply that, up to a subsequence,
\begin{equation}\label{W12 C2 j}
\text{$u_{j}\rightharpoonup u_\infty$ in $W_{\rm loc }^{1,2}(\U)$},\qquad \text{$u_{j} \to u_\infty$ in $C^2_{\rm loc}(\Omega\setminus \Sigma_\infty)$,}
\end{equation}
for some discrete set $\Sigma_\infty \subset \Omega$ with $\# \Sigma_\infty \leq k$, and some function $u_\infty \in C^2(\Omega)$.

Let $\rho_0 \in (0,1)$ and $\Omega_0$ be given by Lemma~\ref{stable outside K},
and define 
$$
\Sigma_\infty^0:=\Omega_0\cap \Sigma_\infty=\{\hat x_1,\ldots,\hat x_\ell\} \quad (\ell \leq k),\qquad
r_0:=\frac12\min\left\{\rho_0, \min_{1\leq i,l\leq \ell }|x_i-x_l|\right\}.
$$
Then it follows from \eqref{W12 C2 j} that, for any $\rho  \in (0,r_0)$,
$$
\max_{1\leq i \leq \ell} \|u_j -u_0\|_{C^2(B_{r_0}(\hat x_i)\setminus B_\rho (\hat x_i))} \to 0 \qquad \text{as }j \to \infty.
$$
In particular, since $u_\infty \in C^2(\Omega)$, there exists a constant $\bar C>0$ such that that following holds:
for any $\rho \in (0,r_0)$ there exists $j_\rho \in \mathbb N$ such that 
\begin{equation}
\label{eq:bound nabla}
\max_{1\leq i \leq \ell} \|\nabla u_j\|_{L^\infty(B_{r_0}(\hat x_i)\setminus B_\rho (\hat x_i))} \leq \bar C  \qquad \forall\,j\geq j_\rho.
\end{equation}
We now make the following:\\
{\bf Claim:} {\it There exist $\hat C,\hat r>0$ such that $\max_{1\leq i \leq \ell}\|u_j\|_{L^\infty(B_{\hat r}(\hat x_i))} \leq \hat C$ for all $j$ sufficiently large.}

Assuming for a moment that the claim is proved, since $u_j \to u_\infty$ in $C^2_{\rm loc}(\Omega\setminus \Sigma_\infty)$ and $u_\infty \in C^2(\Omega)$, it follows from the claim that
$$
\sup_{j}\|u_j\|_{L^\infty(\Omega_0)}<\infty,
$$
where $\Omega_0$ is given by Lemma~\ref{stable outside K}. But then Lemma~\ref{stable outside K} implies that 
$\sup_{j}\|u_j\|_{L^\infty(\Omega)}<\infty$, a contradiction to our initial assumption. Hence, in the case $f_\infty(0)>0$,
the theorem is proved provided we can show the claim.

To prove the claim, it suffices to control $\|u_j\|_{L^\infty(B_{r_0}(\hat x_i))}$ for each $i$. With no loss of generality, we can fix $i=1$ and assume that $\hat x_1=0$.
Then, thanks to \eqref{supercritical j} we can apply Proposition \ref{uniform integrable} to get the following estimate:
for any $r \in (0,r_0/2)$ and any $z \in B_r$, given $\rho \in (0,2r)$ it follows from \eqref{eq:bound nabla} that
\begin{multline}
\label{eq:decay rho}
\int_{B_r(z)}|\nabla u_j|^2\,dx \leq \int_{B_{2r}}|\nabla u_j|^2\,dx=\int_{B_{\rho}}|\nabla u_j|^2\,dx+\int_{B_r\setminus B_{\rho}}|\nabla u_j|^2\,dx\\
 \leq C\rho^{\delta}+\bar C |B_r|\leq C'\left(\rho^\delta +r^n\right)\qquad \forall\,j \geq j_\rho.
\end{multline}
Also, it follows from \eqref{eq:bound L1 j} that
\begin{equation}
\label{eq:bound L1 j bis}
\bint_{B_{r_0}}u_j\leq |B_{r_0}|^{-1}C_0 =:M, \qquad M=M(n,r_0,c_1,\Omega).
\end{equation}
Fix $\gamma:=1/2$, and let $m_0 \in \mathbb N$ be the constant provided by Proposition~\ref{prop:eps reg}
with $g=\hat{\lambda} \hat{f}$ (see \eqref{eq:def g}).
Then, with $C'$ as in \eqref{eq:decay rho}, we choose first $\bar r \in (0,r_0)$ such that $C'\bar r\le \frac12$, and then we fix $\rho \in (0,2\bar r)$ such that
$C'\rho^\delta \leq 2^{-m_0(n-1)-1}\bar r^{n-1}$.
With these choices it follows from \eqref{eq:decay rho} that, for any $r \in [2^{-m_0}\bar r,\bar r]$ and any $z \in B_{2^{-m_0}\bar r}$,
\begin{equation}
\label{eq:bound decay W12}
\int_{B_r(z)}|\nabla u_j|^2\,dx \leq C'\left(\rho^\delta +r^n\right) \leq 2^{-m_0(n-1)-1}\bar r^{n-1} +C'\bar r r^{n-1}\leq \frac12 r^{n-1}+\frac12 r^{n-1} =r^{n-1},
\end{equation}
provided $j$ is sufficiently large.
Hence, applying Proposition~\ref{prop:eps reg} to the functions $u_{j,z}(x):=u_j(z+\bar r x)$ with $f=\bar r ^2\lambda_j  f_j$ (note that $0 \leq \bar r ^2\lambda_j  f_j\leq \lambda_j  f_j\leq \hat{\lambda}  \hat{f}$),
thanks to \eqref{eq:bound L1 j bis} we conclude that $|u_j(z)|=|u_{j,z}(0)|\leq M+C(n)$ for all $j$ sufficiently large, for all $z \in B_{2^{-m_0}\bar r}$. Choosing $\hat r:=2^{-m_0}\bar r$, this proves the claim and concludes the proof of this case.

\subsection{The case $\lambda_\infty=0$}

Let $M_j=\|\nabla u_j\|_{L^2(\Omega)}.$
We prove the result by contradiction, distinguishing between two cases.

\smallskip

\noindent {\bf Case 1: ${M_j}\to 0$ as $j\to \infty$.} In this case, Proposition~\ref{prop:stability} implies that   $u_j\to 0$ in $C^2_{\rm loc}$  outside a set $\Sigma_\infty$ consisting of at most $k$ points.  
Since $\|\nabla u_j\|_{L^2(\Omega)} \to 0$, with the same notation as in the case $\lambda_\infty>0$, we deduce that \eqref{eq:bound decay W12} holds around each point $\hat x_i \in \Sigma_\infty\cap \Omega_0$. Also, by Poincar\'e and H\"older inequalities,
$$\|u_j\|_{L^1(\Omega)}\leq C(n,\Omega)\|u_j\|_{L^2(\Omega)}\leq C(n,\Omega)\|\nabla u_j\|_{L^2(\Omega)} \to 0.$$
Hence, arguing exactly as the previous case, thanks to Proposition~\ref{prop:eps reg} we deduce that  $|u_j(z)|\leq o(1)+C(n)$ for all $j$ sufficiently large, for all $z \in B_{2^{-m_0}\bar r}(\hat x_i)$. This implies that $\sup_j \|u_j\|_{L^\infty(\Omega)} <\infty$,
a contradiction.

\smallskip

\noindent {\bf Case 2: ${M_j}$ are uniformly bounded away from $0$.} 
Consider
$v_j:=\frac{u_j}{M_j}$, so that
\begin{equation}\label{vs}
-\Delta v_j=\lambda_j h_j(v_j),      \quad \quad       h_j(t):=M_j^{-1}f_j(M_j t),\qquad \|\nabla v_j\|_{L^2(\Omega)}=1.
\end{equation}
As in the proof of Proposition~\ref{prop:eps reg}, we multiply the equation satisfied by $v_j$
both by $v_j$ and by $x\cdot \nabla v_j$. Since $v_j \geq 0$, $v_j|_{\partial\Omega}=0$, and $\Omega$ convex,
as in the classical Derrick-Pohozaev argument (see, e.g., \cite[Proof of Theorem 1, Page 515]{E2010}) the boundary terms ``have the right sign'', and we get
$$ \frac{\lambda_j}{M_j^2}\int_{\Omega} f_j(M_j v_j)\,M_j v_j\, dx =\int_{\Omega} |\nabla v_j|^2 dx+o(1)=1+o(1),$$
and
$$\frac{\lambda_j}{M_j^2}\int_{\Omega}F_j(M_j v_j) \, dx \geq \frac {n-2}{2n} \int_{\Omega} |\nabla v_j|^2 dx+o(1)=\frac {n-2}{2n}+o(1). $$
Now, set $S_0:=\{x \in \Omega\,:\,M_j v_j \leq t_0\}$ and note that, since $f_j$ satisfies \eqref{F and f}, 
$$f_j(M_j v_j)\,M_j v_j \geq \left (\frac{2n}{n-2}+\ez\right)  F_j(M_j v_j) \quad \text{ in }  \Omega \setminus S_0.$$
Also, since $\lambda_j \to 0$ and $M_j$ is bounded away from $0$, 
$$
\frac{\lambda_j}{M_j^2}\int_{S_0} f_j(M_j v_j)\,M_j v_j\, dx \leq \frac{\lambda_j}{M_j^2}\int_{S_0} f_j(t_0)\,t_0\, dx\to 0,
$$
$$
\frac{\lambda_j}{M_j^2}\int_{S_0} F_j(M_j v_j)\, dx \leq \frac{\lambda_j}{M_j^2}\int_{S_0} F_j(t_0)\, dx\to 0.
$$
Therefore, combining all together,
\begin{multline*}
1+o(1)= \frac{\lambda_j}{M_j^2}\int_{\Omega\setminus S_0} f_j(M_j v_j)\,M_j v_j\, dx\\
\geq \left (\frac{2n}{n-2}+\ez\right) \frac{\lambda_j}{M_j^2}\int_{\Omega\setminus S_0 }F_j(M_j v_j) \, dx
\geq \left (\frac{2n}{n-2}+\ez\right) \frac {n-2}{2n}+o(1),
\end{multline*}
a contradiction for $j$ large enough, which concludes the proof of the theorem.

\appendix

\section{Boundedness of stable solutions in $B_2\setminus\{0\}$ for $3 \leq n \le 9$}
\label{removable}

It was shown in \cite{CC2006} that, if $3\leq n \leq9$ and $u\in W^{1,\,2}(B_2)\cap C^2(B_2\setminus\{0\})$ is a radially symmetric stable solution to \eqref{equ} in $\Omega=B_2\setminus \{0\}$, then
$$\|u\|_{L^{\infty}(B_{1})}\le C \|u\|_{L^1(B_2)}.$$
Namely, removing a point does not influence the interior estimate of the radially symmetric stable solutions. 

The aim of this appendix is to show that, combining the approximation argument in \cite{CC2006} with some modifications of the arguments in \cite{CFRS2020}, we can prove the following generalization of  \cite[Theorem 1.2]{CFRS2020} which is used in the proof of Proposition~\ref{prop:stability}:
\begin{prop}\label{removable singularity}
Let $3 \leq n \leq 9$, and let $u\in W^{1,\,2}(B_2)\cap C^2(B_2\setminus\{0\})$ be a stable solution to
$$
-\Delta u=f(u)\qquad \text{in }B_2\setminus \{0\},
$$
with
$f:\R\to \R$ nonnegative, increasing, and of class $C^1$.
Then 
$$\|u\|_{L^\infty (B_{1})}\le C \|u\|_{L^1(B_2)}$$
for some $0<\az<1$ and some universal constant $C>0.$
\end{prop}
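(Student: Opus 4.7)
The proof extends \cite[Theorem 1.2]{CFRS2020} from the case of a classical stable solution to the case where $u$ is only $C^2$ outside a single point. The strategy is to exploit the fact that $\{0\}$ has zero $W^{1,2}$-capacity in dimension $n\geq 3$, so that the equation, the stability inequality, and the estimates of \cite{CFRS2020} all survive across the singularity once the relevant integrability is established.

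\textit{Step 1 (Integrability of $f(u)$ and $f'(u)$).} I would first prove $f(u), f'(u)\in L^1_{\rm loc}(B_2)$. Cover the punctured ball $B_1\setminus\{0\}$ by a dyadic family of Euclidean balls whose centers lie at distance $\sim 2^{-k}$ from the origin and whose doubles are contained in $B_2\setminus\{0\}$. On each such ball $u$ is a classical stable solution of index $0$, so Lemma~\ref{lem:Delta} gives $\int f(u)\lesssim 2^{2k}\int|u|$ and Lemma~\ref{lem:Morse}(ii) gives $\int f'(u)\lesssim 2^{-k(n-2)}$. Summing over the dyadic scales, and bounding $\int |u|$ on each dyadic annulus $A_{2^{-k}}$ using the Sobolev embedding $W^{1,2}(B_2)\hookrightarrow L^{2n/(n-2)}(B_2)$ via H\"older, gives the required local integrability.

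\textit{Step 2 (Extension across the singularity).} Since $\{0\}$ has zero $2$-capacity for $n\geq 3$, one can choose a cutoff $\eta_\varepsilon\in C^\infty_c(B_2)$ with $\eta_\varepsilon\equiv 0$ on $B_\varepsilon$ and $\eta_\varepsilon\to 1$ in $W^{1,2}(B_2)$. Using $\eta_\varepsilon\,\varphi$ as test function in the distributional equation on $B_2\setminus\{0\}$, and in the stability inequality $Q_u(\xi)\geq 0$ with $\xi\in C^1_c(B_2\setminus\{0\})$, the integrability from Step~1 allows one to pass to the limit by dominated convergence. This upgrades $u$ to a $W^{1,2}$ weak solution of $-\Delta u=f(u)$ on the whole of $B_2$, with the stability inequality holding for every $\xi\in C^1_c(B_2)$.

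\textit{Step 3 (Modified \cite{CFRS2020} estimate).} With the equation and stability now holding across $\{0\}$, I would re-run the proof of \cite[Theorem 1.2]{CFRS2020} for $u$, inserting the cutoff $\eta_\varepsilon$ of Step 2 inside every test function used there (in particular inside $\xi=|\nabla u|\,\varphi$ and its variants). Because all integration-by-parts identities are performed in $B_2\setminus\{0\}$ where $u\in C^2$, the only new contributions come from $\nabla\eta_\varepsilon$; these are controlled by the $L^1_{\rm loc}$ bounds on $f(u)$, $f'(u)$, and $|\nabla u|^2$ established in Step~1, and vanish as $\varepsilon\to 0$. Taking the limit yields the same interior bound $\|u\|_{L^\infty(B_1)}\leq C\|u\|_{L^1(B_2)}$.

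The hardest step is the last one: one must verify that \emph{every} test function and Pohozaev-type identity used in \cite{CFRS2020} admits the truncation by $\eta_\varepsilon$ without producing boundary contributions that fail to vanish in the limit. This is essentially a delicate bookkeeping exercise rather than a conceptual difficulty, and it is exactly here that the restriction $n\geq 3$ is used through the vanishing $W^{1,2}$-capacity of a point; the same argument is unavailable in $n=2$. This is the ``approximation argument in \cite{CC2006} with modifications of the arguments in \cite{CFRS2020}'' alluded to in the introduction to this appendix.
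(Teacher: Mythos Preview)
Your overall strategy---truncate near the origin and pass to the limit---is the right one, and Steps~1 and~2 are fine. The gap is in Step~3, and it is not merely bookkeeping.

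The key estimate from \cite{CFRS2020} that drives the $L^\infty$ bound is the weighted radial-derivative inequality (their Lemma~2.1), obtained by taking $\xi=(x\cdot\nabla u)\,|x|^{1-n/2}\varphi$ in the stability inequality. If you insert your single capacity cutoff $\eta_\varepsilon$ (vanishing on $B_\varepsilon$, with $|\nabla\eta_\varepsilon|\lesssim \varepsilon^{-1}$) and expand, the error terms carrying $\nabla\eta_\varepsilon$ are of size
\[
\int_{B_{2\varepsilon}\setminus B_\varepsilon}(x\cdot\nabla u)^2\,|x|^{2-n}\,|\nabla\eta_\varepsilon|^2
\ \lesssim\ \varepsilon^{2-n}\int_{B_{2\varepsilon}\setminus B_\varepsilon}|\nabla u|^2 .
\]
The hypothesis $u\in W^{1,2}(B_2)$ gives only $\int_{B_\varepsilon}|\nabla u|^2\to 0$ with \emph{no rate}, so this quantity need not tend to zero (take e.g.\ $|\nabla u|^2\sim |x|^{-n+\delta}$ near the origin). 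Hence the error terms are \emph{not} controlled by the $L^1_{\rm loc}$ bounds of Step~1, and dominated convergence does not apply: the integrands $|\nabla u|^2|x|^{2-n}|\nabla\eta_\varepsilon|^2$ are not dominated by a fixed integrable function.

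The paper resolves this with a two-parameter truncation rather than a single cutoff: one replaces the singular weight $|x|^{1-n/2}$ by its capped version $\min\{|x|^{1-n/2},\varepsilon^{1-n/2}\}$ and multiplies by a cutoff $\zeta\in C^1_c(B_\rho\setminus B_\theta)$. Sending $\theta\to 0$ first leaves an error $\varepsilon^{2-n}\int_{B_{2\theta}\setminus B_\theta}|\nabla u|^2$, which \emph{does} vanish because $\varepsilon$ is held fixed while $\int_{B_{2\theta}}|\nabla u|^2\to 0$; only afterwards does one let $\varepsilon\to 0$, using monotone convergence on the left-hand side. This decoupling of the weight scale from the cutoff scale is the missing idea in your Step~3. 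Once this corrected form of Lemma~2.1 is in hand, the rest of the \cite{CFRS2020} machinery (their Lemma~3.1 and the dyadic decay argument for Theorem~1.2) goes through essentially unchanged.
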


We begin by proving the following generalization of \cite[Lemma 2.1]{CFRS2020} at the origin:

 \begin{lem}
\label{lem:stability estimates}
Let $3 \leq n \leq 9$, and let $u$ and $f$ be as in Proposition~\ref{removable singularity}.
Then
% , for any $y \in B_{1}$ and $0<\rho<  2-|y|$ 
we have 
\begin{equation}\label{first order removable2}
   \int_{B_ {2 \rho/3} } |x\cdot \nabla u|^2 |x|^{-n} \, dx \le   C \rho^{2-n}\int_{B_\rho \setminus B_{2\rho/3}(y)} |\nabla u|^2  \,dx,
\end{equation}
and
\begin{equation}\label{second order removable}
\int_{B_ 1} |\nabla u|^2\,dx \le C  \int_{B_{3/2}\setminus B_1} |\nabla u|^2 \,dx.
\end{equation}
\end{lem}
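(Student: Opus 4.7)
My strategy is to adapt the proof of \cite[Lemma~2.1]{CFRS2020}, the only new difficulty being that the test functions used in that argument are not necessarily compactly supported in $B_2 \setminus \{0\}$, where the stability of $u$ is directly available. To handle this, I will exploit the fact that in dimension $n \ge 3$ the point $\{0\}$ has zero $W^{1,2}$-capacity in $B_2$, through the explicit Lipschitz cut-offs
\begin{equation*}
\eta_\epsilon(x) := \biggl(1 - \frac{(|x|^{2-n}-1)_+}{\epsilon^{2-n}-1}\biggr)_+ ,
\end{equation*}
which satisfy $\eta_\epsilon \equiv 0$ on $\bar B_\epsilon$, $\eta_\epsilon \equiv 1$ outside $B_1$, $0 \le \eta_\epsilon \le 1$, and
$\int_{\R^n}|\nabla \eta_\epsilon|^2\,dx = (n-2)\,\omega_{n-1}/(\epsilon^{2-n}-1) \to 0$ as $\epsilon \to 0^+$.

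The plan is first to extend the stability inequality for $u$ from $C^1_c(B_2\setminus\{0\})$ to bounded test functions in $C^1_c(B_2)$. Given $\xi \in C^1_c(B_2) \cap L^\infty(B_2)$, the product $\xi \eta_\epsilon$ is admissible in the stability on $B_2 \setminus \{0\}$; expanding $|\nabla(\xi\eta_\epsilon)|^2$ and using Cauchy--Schwarz, the extra terms are of size $O\bigl(\|\xi\|_\infty^2 \int|\nabla\eta_\epsilon|^2\bigr) = o(1)$ as $\epsilon \to 0^+$, while Fatou's lemma handles the right-hand side, yielding
\begin{equation*}
\int_{B_2} |\nabla \xi|^2\,dx \;\ge\; \int_{B_2} f'(u)\,\xi^2 \,dx \qquad \text{for all } \xi \in C^1_c(B_2)\cap L^\infty(B_2).
\end{equation*}
With this extended stability at hand, I would then repeat the proof of \cite[Lemma~2.1]{CFRS2020}: their test functions are products of smooth cut-offs with smooth scalar functionals of $\nabla u$ (dictated by the Sternberg--Zumbrun and Bochner identities for $-\Delta u = f(u)$), and all pointwise identities used there remain valid on $B_2 \setminus \{0\}$ since $u$ is $C^2$ there. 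The two estimates \eqref{first order removable2} and \eqref{second order removable} would then follow from the same chain of inequalities.

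\textbf{The main obstacle} is that the CFRS2020 test functions need not be globally bounded near the origin, so that the extended stability above does not apply to them directly. I plan to circumvent this by inserting a further factor of $\eta_\epsilon$ into each of their test functions and carrying out their computation for the truncated version; the extra error terms then take the form $\int_{B_1} \phi^2|\nabla \eta_\epsilon|^2\,dx$ with $\phi$ of the type $(\text{smooth cut-off}) \cdot |\nabla u|$. Controlling these as $\epsilon \to 0^+$ requires a quantitative weighted estimate on $\nabla u$ near $0$, which I would derive either from the extended stability inequality itself (applied to Hardy-type test functions supported near $0$) or from a direct approximation of $u$ by smooth stable solutions on $B_2$ in the spirit of the radial argument in \cite{CC2006}. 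Once this reduction is completed, both \eqref{first order removable2} and \eqref{second order removable} follow as in the non-singular case.
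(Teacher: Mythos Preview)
Your plan has a genuine gap at the key step. When you insert the capacity cutoff $\eta_\epsilon$ into the CFRS test function $\xi = (x\cdot\nabla u)\,|x|^{1-n/2}\zeta$, the error term you must control is
\[
\int_{B_1} (x\cdot\nabla u)^2\,|x|^{2-n}\,\zeta^2\,|\nabla\eta_\epsilon|^2\,dx .
\]
With a cutoff localized to the annulus $\{\epsilon<|x|<2\epsilon\}$ this behaves like $\epsilon^{2-n}\int_{\{|x|\sim\epsilon\}}|\nabla u|^2\,dx$, while with your capacitary $\eta_\epsilon$ (whose gradient is supported on all of $\{\epsilon<|x|<1\}$) it behaves like $(\epsilon^{2-n}-1)^{-2}\int_{\epsilon<|x|<1}|\nabla u|^2|x|^{6-3n}\,dx$. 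Neither expression tends to $0$ as $\epsilon\to 0$ on the strength of $u\in W^{1,2}(B_2)$ alone: you would need a rate like $\int_{B_r}|\nabla u|^2=o(r^{n-2})$, which is essentially the content of the estimate you are trying to prove. Your proposed remedies are not clearly available either: a Hardy-type use of stability near $0$ yields information on $f'(u)$, not on $|\nabla u|^2$ with the required weight, and no approximation of $u$ by smooth stable solutions on all of $B_2$ is known outside the radial case of \cite{CC2006}.

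The paper resolves this by decoupling the two roles of the origin via \emph{two independent parameters}. Instead of a single cutoff, it caps the singular weight, replacing $|x|^{1-n/2}$ by $\min\{|x|^{1-n/2},\,\epsilon^{1-n/2}\}$, and separately takes the cutoff $\zeta\in C^1_c(B_\rho\setminus B_\theta)$ with $0<\theta\ll\epsilon\ll\rho$. On $A(\epsilon,\rho)$ the computation is verbatim that of \cite[Lemma~2.1]{CFRS2020}; on $A(\theta,\epsilon)$ the weight is the constant $\epsilon^{1-n/2}$, and the only surviving error term is
\[
C\,\epsilon^{2-n}\int_{\theta<|x|<2\theta}|\nabla u|^2\,dx .
\]
Here $\theta$ and $\epsilon$ are independent: sending $\theta\to 0$ first kills this error for every fixed $\epsilon$ using only $|\nabla u|^2\in L^1$, and then $\epsilon\to 0$ extends the left-hand side to the full ball by monotone convergence. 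This two-parameter limit is the missing idea in your outline.

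A smaller remark: the inequality \eqref{second order removable} in the paper does not come from the Sternberg--Zumbrun identity. It is obtained from \eqref{first order removable2} (more precisely, a slightly sharper version with $7\rho/8$ in place of $2\rho/3$) applied with shifted centers $y\in B_{1/8}$ and $\rho=11/8$, and then averaged over $y$; the averaging turns control of the radial derivative $\frac{x-y}{|x-y|}\cdot\nabla u$ into control of the full gradient on $B_1$.
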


\begin{proof}
For simplicity of notation, given $0<r<s<1$, we define $A(s,r):=B_r\setminus B_s.$

% We shall first prove the following improved version of \eqref{first order removable2}: for any $y \in B_{1}$ and $0<\rho<  2-|y|$ we have
% \begin{equation}\label{first order removable}
%    \int_{B_ {7\rho/8}(y)} |(x-y)\cdot \nabla u|^2 |x-y|^{-n} \, dx \le   C \rho^{2-n}\int_{B_\rho(y)\setminus B_{7\rho/8}(y)} |\nabla u|^2  \,dx.
%    \end{equation}
% We only prove \eqref{first order removable} in the case $y=0$, the general case being analogous\footnote{Actually  the case $y\neq 0$ is simpler, since for $y=0$ the function $x\mapsto |x-y|^{-n/2}(x-y)\cdot \nabla u(x)$ (that is used as a test function is the stability inequality) is more singular at the origin.}.

Fix $0<\theta \ll \ez \ll \rho$, $\eta\in C_c^1(A(\theta,2))$, and consider $\xi=(x\cdot \nabla u)\eta$ as test function in the stability inequality for $u$. Then, by the very same computation as the one in \cite[Proof of Lemma 2.1, Step 1]{CFRS2020},
we have 
\begin{equation}\label{first order removable}
0\le \int_{A(\theta,\,2)}\Big((x\cdot \nabla u)^2 |\nabla\eta|^2 +2(x\cdot \nabla u) \nabla u\cdot \nabla(\eta^2)-|\nabla u|^2 x\cdot \nabla (\eta^2)-(n-2)|\nabla u|^2\eta^2\Big)\, dx.
\end{equation}

We first choose
% $\eta= \zeta$ with
$\eta\in C^1_c(A(\theta,2))$ so that $0 \leq \zeta \leq 1$, $\zeta=1$ inside $A(2\theta, 1)$, $|\nabla \zeta|\leq C/\theta$ in $A(\theta, 2\theta)$, and $|\nabla \zeta|\leq C $ in $A(1, 3/2)$.
Then it follows from Young's inequality that
$$\int_{A(\theta,\,2)} |Du|^2 \le C  \int_{A(\theta,\,2\theta)} |x|^2|Du|^2\theta^{-2} + C \int_{A(1, 3/2)} |Du|^2.$$
Then \eqref{second order removable} follows by sending $\theta\to 0$ together with the assumption that $u\in W^{1,\,2}(B_2).$

As for \eqref{first order removable2}, 
we choose
$\eta=\min\big\{|x|^{1-\frac n 2},  \ez^{1-\frac n 2}\big\} \zeta$ with
$\zeta\in C^1_c(A(\theta,2))$, then 
inside $A(\ez,2)$ the formulas are identical to the ones in \cite[Proof of Lemma 2.1, Step 2]{CFRS2020}. Therefore, in the integrals over $A(\ez,2)$ we have exactly all the terms appearing in \cite[Equation (2.2)]{CFRS2020}, and the only difference concerns the integrals over $A(\theta,\ez)$.
Note that, inside $A(\theta,\ez)$, it holds
$$
|\nabla\eta|^2= \ez^{2-n}|\nabla \zeta|^2,\qquad \nabla(\eta^2)=2\ez^{2-n}\zeta\nabla \zeta.
$$
Thus, we obtain
\begin{align*}
0&\leq \int_{A(\ez,\,2)}\Big(-\frac{(n-2)(10-n)}{4}  |x|^{-n}(x\cdot \nabla u)^2 \zeta^2 -2|\nabla u|^2 |x|^{2-n} \zeta \ x\cdot \nabla\zeta + 4 |x|^{2-n} (x\cdot \nabla u)\zeta \ \nabla u\cdot   \nabla\zeta \nonumber\\
&\quad \quad \quad \quad \quad \quad \quad \quad \quad \quad -(n-2)|x|^{-n} \zeta (x\cdot \nabla\zeta)(x\cdot \nabla u)^2 +|x|^{2-n}(x\cdot \nabla u)^2  |\nabla\zeta|^2\Big)\,dx \\
&+\ez^{2-n}\int_{A(\theta,\,\ez)}\Big((x\cdot \nabla u)^2|\nabla \zeta|^2+4(x\cdot \nabla u)\zeta\,\nabla u\cdot \nabla \zeta
-2|\nabla u|^2\zeta (x\cdot \nabla \zeta)-(n-2)|\nabla u|^2\zeta^2\Big)\,dx.
\end{align*}
We now choose $\zeta \in C^1_c(A(\theta,\rho))$ such that $0 \leq \zeta \leq 1$, $\zeta=1$ inside $A(2\theta,\rho/2)$, $|\nabla \zeta|\leq C/\theta$ in $A(\theta,2\theta)$, and $|\nabla \zeta|\leq C/\rho$ in $A(7\rho/8,\rho)$.
With this choice, the formula above implies
\begin{align*}
\frac{(n-2)(10-n)}{4} \int_{A(\ez,\,7\rho/8)} |x|^{-n}(x\cdot \nabla u)^2 \, dx
& \le  C\rho^{2-n} \int_{A(7\rho/8,\,\rho)}  |\nabla u|^2 + C\ez^{2-n}\int_{A(\theta,\,2\theta)}|\nabla u|^2\,dx,
\end{align*}
so \eqref{first order removable2} follows by letting first $\theta \to 0$ and then $\ez \to 0$ (recall that $3\leq n \leq 9$).

% Note now that \eqref{first order removable} readily implies \eqref{first order removable2}.
% Also, as a consequence of \eqref{first order removable} applied with $y \in B_{1/8}$ and $\rho=\frac{11}8$,
% we have $$
% \int_{B_1}\Big|\frac{x-y}{|x-y|}\cdot \nabla u\Big|^2  \, dx \leq \int_{B_ {\frac{77}{64}}(y)} \Big|\frac{x-y}{|x-y|}\cdot \nabla u\Big|^2  \, dx \le   C \int_{B_{\frac{11}8}(y)\setminus B_{\frac{77}{64}}(y)} |\nabla u|^2  \,dx \leq C\int_{B_{3/2}\setminus B_{1}} |\nabla u|^2  \,dx.
% $$
 
\end{proof}

We can now prove the following analogue of \cite[Lemma 3.1]{CFRS2020}:\footnote{In Lemma~\ref{lem:22} we require $3 \leq n \leq 9$ since we proved \eqref{second order removable} as a consequence of \eqref{first order removable}, and the latter bound requires this dimensional restriction.
However, for $n\geq 10$ one could combine our approximation argument with \cite[Proof of Theorem 7.1]{CFRS2020} to show that
$$
   \int_{B_ {7\rho/8}(y)} |(x-y)\cdot \nabla u|^2 |x-y|^{-a} \, dx \le   C \rho^{2-a}\int_{B_\rho(y)\setminus B_{7\rho/8}(y)} |\nabla u|^2  \,dx.
$$
for any $a<2(1+\sqrt{n-1}).$ In particular, choosing $a=2$ and arguing as in the proof of Lemma~\ref{lem:stability estimates}, one proves the validity of \eqref{second order removable} in every dimension. As a consequence, one  can show that Lemma~\ref{lem:22} holds in every dimension.}
\begin{lem}\label{lem:22}
Let $3 \leq n \leq 9$, and let $u\in W^{1,\,2}(B_2)\cap C^2(B_2\setminus \{0\})$ be a stable solution to
$$
-\Delta u=f(u)\qquad \text{in }B_2\setminus \{0\},
$$
with
$f:\R\to \R$ nonnegative, increasing, and of class $C^1$.
Assume that
$$
\int_{B_1}|\nabla u|^2\,dx\geq \delta \int_{B_2}|\nabla u|^2\,dx
$$
for some $\delta>0$.
Then there exists a constant $C_\delta$ such that
$$
\int_{B_{3/2}\setminus B_1}|\nabla u|^2 \,dx\leq C_\delta \int_{B_{3/2}\setminus B_1}|x\cdot \nabla u|^2\,dx.
$$
\end{lem}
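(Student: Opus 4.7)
We argue by contradiction following the compactness strategy of \cite[Lemma~3.1]{CFRS2020}, adapted to handle the fact that here stability is only assumed on $B_2\setminus B_{1/2}$ rather than on all of $B_2$. Suppose the conclusion fails: there exist a sequence of solutions $u_j\in W^{1,2}(B_2)\cap C^2(B_2\setminus B_{1/2})$ stable for nonnegative, increasing, $C^1$ nonlinearities $f_j$, satisfying $\int_{B_1}|\nabla u_j|^2\geq \delta\int_{B_2}|\nabla u_j|^2$, but such that
\[
\int_{B_{3/2}\setminus B_1}|\nabla u_j|^2\,dx\geq j\int_{B_{3/2}\setminus B_1}|x\cdot\nabla u_j|^2\,dx.
\]
Normalize so that $\int_{B_{3/2}\setminus B_1}|\nabla u_j|^2=1$; then $\int_{B_{3/2}\setminus B_1}|x\cdot\nabla u_j|^2\to 0$ and $\int_{B_1}|\nabla u_j|^2\geq\delta$.

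The first step is to establish uniform $W^{1,2}(B_2\setminus B_{1/2})$ bounds on the shifted functions $\tilde u_j:=u_j-c_j$, where $c_j:=\bint_{B_2\setminus B_{1/2}} u_j$; the translated nonlinearity $\tilde f_j(t):=f_j(t+c_j)$ is still nonnegative, increasing, and $C^1$, and $\tilde u_j$ remains stable in $B_2\setminus B_{1/2}$ for the equation $-\Delta\tilde u_j=\tilde f_j(\tilde u_j)$. We derive a version of \eqref{second order removable} under the weakened stability hypothesis by repeating the computation of Lemma~\ref{lem:stability estimates} with cutoffs $\zeta\in C^1_c(A(1/2+\theta,2))$ and sending $\theta\to 0^+$; the inner-boundary contributions vanish since $\tilde u_j\in W^{1,2}(B_2)$. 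This gives $\int_{B_1\setminus B_{1/2}}|\nabla\tilde u_j|^2\leq C\int_{B_{3/2}\setminus B_1}|\nabla\tilde u_j|^2=C$, and a similar estimate for $\int_{B_2\setminus B_{3/2}}|\nabla\tilde u_j|^2$. Combined with Poincar\'e on $B_2\setminus B_{1/2}$, this yields $\|\tilde u_j\|_{W^{1,2}(B_2\setminus B_{1/2})}\leq C_0$. Extracting a subsequence, $\tilde u_j\rightharpoonup u_\infty$ weakly in $W^{1,2}(B_2\setminus B_{1/2})$ and strongly in $L^2$, and the vanishing $\int|x\cdot\nabla u_j|^2\to 0$ on the annulus forces $x\cdot\nabla u_\infty=0$ a.e.\ on $B_{3/2}\setminus B_1$, so $u_\infty(r\omega)=g(\omega)$ on the annulus for some $g\in W^{1,2}(S^{n-1})$.

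To pass the PDE and stability inequality to the limit, we use two tools: Lemma~\ref{lem:Delta} gives a uniform $L^1_{\rm loc}(B_2\setminus B_{1/2})$ bound on $\tilde f_j(\tilde u_j)$ (applied on balls contained in $B_2\setminus B_{1/2}$), and Lemma~\ref{lem:Morse}(ii) gives the analogous Morrey bound for $\tilde f_j'(\tilde u_j)$. Standard elliptic regularity then upgrades the weak convergence to strong $W^{1,2}_{\rm loc}(B_2\setminus B_{1/2})$ convergence, so $\int_{B_{3/2}\setminus B_1}|\nabla u_\infty|^2=1$ and hence $g$ is nonconstant on $S^{n-1}$. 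Extracting weak-$*$ limits $\mu,\nu\geq 0$ of the measures $\tilde f_j'(\tilde u_j)\,dx$ and $\tilde f_j(\tilde u_j)\,dx$ respectively, $u_\infty$ satisfies $-\Delta u_\infty=\nu$ in $B_2\setminus B_{1/2}$ and the limit stability inequality $\int|\nabla\xi|^2\,dx\geq\int\xi^2\,d\mu$ for all $\xi\in C^1_c(B_2\setminus B_{1/2})$.

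The final contradiction comes from a rigidity argument on the annulus. Since $u_\infty(r\omega)=g(\omega)$ on $B_{3/2}\setminus B_1$, the limit PDE forces $\nu=-r^{-2}\Delta_{S^{n-1}}g(\omega)\,dx$ there (as a measure). Testing the limit stability inequality with functions of the form $\xi(x)=\varphi(r)g(\omega)$, for $\varphi$ a radial cutoff supported in $(1,3/2)$, and using the decomposition $|\nabla\xi|^2=\varphi'(r)^2g(\omega)^2+\varphi(r)^2r^{-2}|\nabla_{S^{n-1}}g|^2$, we arrive at an integral inequality that, combined with the sign of $\mu$, forces $\nabla_{S^{n-1}}g\equiv 0$, contradicting the nonconstancy of $g$. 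The hardest part is this rigidity step, together with the measure-theoretic passage to the limit in the stability inequality, since the lack of uniform bounds on $f_j$ requires us to work purely with distributional/measure limits.
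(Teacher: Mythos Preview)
Your contradiction setup and normalization are correct, and the overall architecture mirrors the paper's. The genuine gap is in the compactness step. You claim that the $L^1_{\rm loc}$ bounds on $\tilde f_j(\tilde u_j)$ (from Lemma~\ref{lem:Delta}) and on $\tilde f_j'(\tilde u_j)$ (from Lemma~\ref{lem:Morse}(ii)) suffice for ``standard elliptic regularity'' to upgrade weak $W^{1,2}_{\rm loc}$ convergence to strong. This does not follow: a uniform $L^1_{\rm loc}$ bound on $-\Delta \tilde u_j$ yields only $W^{1,q}_{\rm loc}$ compactness for $q<\frac{n}{n-1}\leq\frac32$ in dimension $n\geq 3$, and even the Morrey bound $\int_{B_r}f_j'(u_j)\leq Cr^{n-2}$ does not give gradient estimates above $L^2$. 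Without strong $W^{1,2}$ convergence on $B_{3/2}\setminus B_1$ you cannot conclude $\int_{B_{3/2}\setminus B_1}|\nabla u_\infty|^2=1$ (weak lower semicontinuity only gives $\leq 1$), and the contradiction collapses if the limit has zero Dirichlet energy on the annulus.

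The paper closes this gap with a different tool: \cite[Proposition~2.4]{CFRS2020}, which gives the higher-integrability estimate $\|\nabla u_k\|_{L^{2+\gamma}(B_{3/2}\setminus B_1)}\leq C\|\nabla u_k\|_{L^2(B_2\setminus B_{1/2})}$ for stable solutions. Combined with the doubling hypothesis and \eqref{second order removable} (which already bounds $\int_{B_2}|\nabla u_k|^2\leq C/\delta$ directly, so your inner/outer decomposition is unnecessary), this controls $v_k:=u_k-\bint_{B_{3/2}\setminus B_1}u_k$ in $W^{1,2+\gamma}(B_{3/2}\setminus B_1)$, hence compactly in $W^{1,2}$. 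The rigidity step is then much simpler than your measure-theoretic scheme: the strong limit $v$ is superharmonic (since $-\Delta v_k=f_k(u_k)\geq 0$ passes to the limit), satisfies $x\cdot\nabla v\equiv 0$ on the annulus, and a superharmonic function that is radially constant on an annulus must be constant (its angular trace is superharmonic on $S^{n-1}$), contradicting $\|\nabla v\|_{L^2}=1$. No passage to measure limits of $f_j$ or $f_j'$, and no test of the limit stability inequality, is needed.
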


\begin{proof}
Assume the result to be  false.
Then, there exists a sequence of stable solutions to $-\Delta u_k=f_k(u_k)$ in $B_2\setminus \{0\}$, with $f_k:\R\to \R$ nonnegative, increasing, and of class $C^1$,
such that
\begin{equation}\label{eq:doubling}
\int_{B_1}|\nabla u_k|^2\,dx\geq \delta \int_{B_2}|\nabla u_k|^2\,dx,
\qquad 
\int_{B_{3/2}\setminus B_1} |\nabla u_k|^2\,dx =1,
\quad\text{and}\quad \int_{B_{3/2}\setminus B_1}|x\cdot \nabla u_k|^2\,dx\to 0.
\end{equation}
Now, thanks to \eqref{eq:doubling} and \eqref{second order removable},
\begin{equation}\label{3.1bis}
 \int_{B_2}|\nabla u_k|^2\,dx\leq \frac{1}{\delta} \int_{B_1}|\nabla u_k|^2\,dx\leq   \frac{C}{\delta}\int_{B_{3/2}\setminus B_1} |\nabla u_k|^2\,dx = \frac{C}{\delta}.
\end{equation}
Therefore, since $u_k$ is stable in $B_2\setminus B_{1/2}$, it follows from \cite[Proposition 2.4]{CFRS2020} and a standard scaling and  covering argument that 
\[
 \|\nabla u_k\|_{L^{2+\gamma}({B_{3/2}\setminus B_{1}})} \le C \|\nabla u_k\|_{L^{2}({B_{2}\setminus B_{1/2}})} \leq \frac{C}{\delta}.
\]
This implies that the sequence of superharmonic functions 
\[
v_k : = u_k -\ave_{B_{3/2}\setminus B_1}u_k
\]
satisfies  $$\|v_k\|_{L^1(B_{3/2}\setminus B_1)} \leq C\|v_k\|_{L^2(B_{3/2}\setminus B_1)} \leq C$$
(thanks to \eqref{3.1bis} and H\"older together with Poincar\'e inequalities), as well as
\[
\|\nabla v_k\|_{L^2(B_{3/2}\setminus B_1)}=  1,\quad  \| v_k\|_{W^{1,2+\gamma}(B_{3/2}\setminus B_1)}\le C,
\qquad
\int_{B_{3/2}\setminus B_1}|x\cdot \nabla v_k|^2\,dx\to 0.
\]
Thus, as in the proof of \cite[Proposition 2.4]{CFRS2020}, up to a subsequence we have that $v_k \to  v$ strongly in $W^{1,2}(B_{3/2}\setminus B_1)$, where $v$ is a superharmonic function in $B_{3/2}\setminus B_1$ satisfying 
\[
\|\nabla v\|_{L^2(B_{3/2}\setminus B_1)}=  1 \qquad \mbox{and} \qquad x\cdot \nabla v \equiv 0 \quad \mbox{a.e. in }B_{3/2}\setminus B_1.
\]
Again as in the proof of \cite[Proposition 2.4]{CFRS2020}, this implies that $v$ is constant in $B_{3/2}\setminus B_1$, a contradiction that proves the result.
\end{proof}

We can now prove the main result of this appendix.

\begin{proof}[Proof of Proposition~\ref{removable singularity}]
We divide the proof into two steps.

\noindent{\bf Step 1: Gradient decay at the origin}. 
The argument is similar to the one in \cite[Proof of Theorem 1.2]{CFRS2020}, with some minor modifications.
Given  $\rho \in (0,1)$ we define the 
quantities 
$$
\mathcal D(\rho):=\rho^{2-n}\int_{B_\rho}|\nabla u|^2\,dx
\qquad \textrm{and} \qquad
\mathcal R(\rho):=\int_{B_\rho }|x|^{-n}|x\cdot \nabla u|^2\,dx .
$$
We first claim that there exists a dimensional exponent $\alpha>0$ such that
\begin{equation}
\label{eq:decay R}
\mathcal R(\rho)\leq C \|\nabla u\|^2_{L^2(B_{3/2})}\rho^{2\alpha}\qquad \forall\,\rho \in (0,1/4).
\end{equation}

To prove this claim, note that \eqref{first order removable2} implies that
\begin{equation}\label{agnwiohwio}
\mathcal R(\rho)\leq C\rho^{2-n}\int_{B_{3\rho/2}\setminus B_\rho}|\nabla u|^2\,dx \qquad \forall\,\rho \in (0,1/4).
\end{equation}
Hence, if $\mathcal D(\rho)\geq \frac12 \mathcal D(2\rho)$,
% and $0 \not \in B_{2\rho(y)}\setminus B_{\rho/2(y)}$
then we can apply Lemma \ref{lem:22} with $\delta=1/2$ to the function $u$ at the origin to deduce that
$$
\rho^{2-n}\int_{B_{3\rho/2}\setminus B_\rho}|\nabla u|^2\,dx \leq C\rho^{-n} \int_{B_{3\rho/2}\setminus B_\rho}|x\cdot \nabla u|^2\,dx\leq C \bigl(\mathcal R(3\rho/2)-\mathcal R(\rho)\bigr)
$$
for some universal constant $C$; recall $3\le n\le 9$.
Combining this bound with \eqref{agnwiohwio} and using that $\mathcal R$ is nondecreasing, we deduce that
\begin{equation}\label{ahsiogwio}
\mathcal R(\rho,y)\leq C\bigl(\mathcal R(2\rho)-\mathcal R(\rho\bigr)
\qquad \text{provided $ \mathcal D(\rho)\geq {\textstyle \frac12}  \mathcal D(2\rho)$. }
% and $0 \not \in B_{2\rho(y)}\setminus B_{\rho/2(y)}$}.
\end{equation}
% Note that $0 \not \in B_{2\rho(y)}\setminus B_{\rho/2(y)}$ is equivalent to saying that either $\rho\geq 2|y|$ or $\rho \leq |y|.$

Thus if we define $a_j : = \mathcal D(2^{-j-2})$, $b_j := \mathcal R(2^{-j-2})$, then there exists a universal constant $L>1$ such that: 
\begin{itemize}
\item[(i)] $b_j\le b_{j-1}$ for all $j\ge 1$ (since $\mathcal R$ is nondecreasing); 
\item[(ii)] $a_j+ b_j \le L a_{j-1}$   for all $j\ge 1$ (by \eqref{agnwiohwio});
\item[(iii)] if $a_{j}  \ge \frac 1 2  a_{j-1}$ then $b_{j} \le  L(b_{j-1} -b_j)$  for all  $ j\in \mathbb N$. 
% \setminus \{N-2,N-1\}\footnote{The condition on $j$ guarantees that either $2^{-j-2}\leq |y|$ or $2^{-j-2}\geq 2|y|$ .}$ 
(by \eqref{ahsiogwio}).
\end{itemize}
Therefore, if we choose $\ez>0$ such that $2^{-\ez}=\frac{L^{1+\ez}}{1+L}$, and define $c_j:=a_j^\ez b_j$ and $\theta:=(2^{-\ez})^{\frac{1}{1+\ez}} \in (0,1)$, then the proof of \cite[Lemma 3.2]{CFRS2020} 
shows that 
$$
c_{j+1}\leq \theta c_j \qquad \text{for all  $ j\in \mathbb N$},
$$
which implies that
\begin{equation}
\label{eq:geom 1}
c_j \leq \theta^{j}c_0 \quad \text{for }  j\ge 1. 
\end{equation}
% Also, as a consequence of (i) and (ii) above, we have
% \begin{equation}
% \label{eq:geom 2}
% c_{N-1} =a_{N-1}^\ez b_{N-1} \leq (L a_{N-2})^\ez b_{N-2}\leq L^{\ez}c_{N-2}, \qquad c_N =a_N^\ez b_N \leq (L^2 a_{N-2})^\ez b_{N-2}\leq L^{2\ez}c_{N-2}.
% \end{equation}
% Hence, 
% % combining \eqref{eq:geom 1} and \eqref{eq:geom 2} 
% we easily deduce that
% $$
% c_j \leq L^{2\ez} \theta^{j-2} c_0 \qquad \forall\,j \geq 1.
% $$
As in the proof of \cite[Lemma 3.2]{CFRS2020}, this implies that 
$$
b_j \leq C(a_0 + b_0)\theta^j\leq C \|\nabla u\|^2_{L^2(B_{3/2})}\theta^j\qquad \forall\,j \geq 1,
$$
so \eqref{eq:decay R} follows by choosing $\alpha>0$ so that $2^{-2\alpha}=\theta$.

We now observe that, thanks to \cite[Proposition 2.5]{CFRS2020} together with a standard scaling and covering argument, we have
$
\|\nabla u\|_{L^2(B_{3/2}\setminus B_1)}\leq C\|u\|_{L^1(B_{2}\setminus B_{1/2})}.
$
Hence, combining this bound with \eqref{eq:decay R} and \eqref{second order removable},
we obtain
\begin{equation}
\label{eq:decay R2}
\mathcal R(\rho)\leq C\|u\|^2_{L^1(B_{2})}\rho^{2\alpha}\qquad \forall\,\rho \in (0,1/4).
\end{equation}
This concludes the first step. 

\noindent{\bf Step 2: Boundedness inside the unit ball}.
First, using polar coordinates $(s,\omega)$ and Fubini's Theorem,
\begin{align*}
\int_{B_\rho}|u(x)-u(0)|\,dx
&=\int_{\mathbb S^{n-1}}\int_0^\rho |u(s\omega)-u(0)|\,s^{\,n-1}\,ds\,d\omega \\
&\le \int_{\mathbb S^{n-1}}\int_0^\rho\!\!\left(\int_0^s |\nabla u(t\omega)\cdot\omega|\,dt\right)s^{\,n-1}\,ds\,d\omega \\
&= \int_{\mathbb S^{n-1}}\int_0^\rho |\nabla u(t\omega)\cdot\omega|
      \left(\int_t^\rho s^{\,n-1}\,ds\right)dt\,d\omega
 \le \frac{\rho^n}{n}\int_{B_\rho}\frac{|\nabla u(x)\cdot x|}{|x|^n}\,dx .
\end{align*}
Next, using H\"older inequality,
\begin{align*}
\int_{B_\rho}\frac{|x\cdot\nabla u|}{|x|^n}\,dx
&=\sum_{k\ge0}\int_{B_{2^{-k}\rho}\setminus B_{2^{-(k+1)}\rho}}\frac{|x\cdot\nabla u|}{|x|^n}\,dx \\
&\le \sum_{k\ge0}\bigg(\int_{B_{2^{-k}\rho}\setminus B_{2^{-(k+1)}\rho}}\frac{|x\cdot\nabla u|^2}{|x|^n}\,dx\bigg)^{1/2}
          \bigg(\int_{B_{2^{-k}\rho}\setminus B_{2^{-(k+1)}\rho}}|x|^{-n}\,dx\bigg)^{1/2} \\
          &=C  \sum_{k\ge0}\bigg(\int_{B_{2^{-k}\rho}\setminus B_{2^{-(k+1)}\rho}}\frac{|x\cdot\nabla u|^2}{|x|^n}\,dx\bigg)^{1/2}
          \le  C \sum_{k\ge0}\sqrt{\mathcal R(2^{-k}\rho)} .
\end{align*}
Thus, combining the last two equations with \eqref{eq:decay R2}, we obtain 
\begin{equation}\label{holder average at 0}
     \bint_{B_\rho} |u(x)-u(0)|\,dx \leq C  \|u\|_{L^1(B_{2})} \sum_{k=0}^\infty (2^{-k}\rho)^\az \leq C \|u\|_{L^1(B_{2})} \rho^\alpha  \qquad \forall\,\rho \in (0,1/4)
\end{equation}
Next, given $y\in B_{1/8}$, define $r=|y|$. Then the interior estimate from \cite[Theorem 1.2]{CFRS2020} applied to the rescaled function $u(y+r\,\cdot )-u(0)$ yields
\begin{multline}
\label{eq:uy0}
|u(y)-u(0)|\leq \|u(y+r\,\cdot )-u(0)\|_{L^\infty(B_{1/2})}
\\
\leq C \|u(y+r\,\cdot )-u(0)\|_{L^1(B_{1})}
=C \bint_{B_r(y)} |u(x)-u(0)|\,dx.
\end{multline}
Noticing that $B_r(y)\subset B_{2r}$, \eqref{holder average at 0} applied with $\rho=2r$ implies
$$
\bint_{B_r(y)} |u(x)-u(0)|\,dx \leq C \bint_{B_{2r}(0)} |u(x)-u(0)|\,dx \leq C \|u\|_{L^1(B_{2})} r^\alpha\leq C \|u\|_{L^1(B_{2})},
$$
that combined with \eqref{eq:uy0} gives
$$
|u(y)-u(0)| \leq  C \|u\|_{L^1(B_{2})} \qquad  \forall\, y \in B_{1/8}.
$$
Furthermore, using \eqref{holder average at 0} with $\rho=1/4$,
$$
|u(0)|\leq \bint_{B_{1/4}} |u(x)-u(0)|\,dx+\bint_{B_{1/4}} |u(x)|\,dx\leq C\|u\|_{L^1(B_{2})}.
$$
Combining the two equations above, we finally get
$$
\|u\|_{L^\infty(B_{1/8})}\le C \|u\|_{L^1(B_{2})} .
$$
Since the $L^\infty$ norm of $u$ inside $B_1\setminus B_{1/8}$ can be directly bounded using \cite[Theorem 1.2]{CFRS2020}, the result follows.
\end{proof}

\section{Uniform boundedness of solutions with spectrum bounded below}
\label{finite morse L}
Although not relevant for this paper, it is interesting to observe that, by simply adapting the arguments in \cite{CFRS2020}, one can deduce an a priori bound in $L^\infty$ for solutions of $-\Delta u=f(u)$ whenever the spectrum of the linearized operator $-\Delta -f'(u)$ is contained inside $[-\Lambda,+\infty)$ for some finite constant $\Lambda \geq 0$. Also, for finite Morse index solutions, the constant $\Lambda$ depends only on $n$ and on a maximal finite dimensional subspace $X_k$ on which $Q_u$ is negative definite.
Unfortunately one cannot hope in general to control $\Lambda$ in terms only on the Morse index, as can be seen by considering the family of solutions \eqref{critical case} (which has index $1$).

To present this result, consider 
 $u \in C^2(B_2)$ a solution to $-\Delta u=f(u)$ in $B_2$ with ${\rm ind}(u,B_2)\leq k$,
and define
$$
\widehat Q_u[\xi,\zeta] :=\int_{B_{2}}\Big(\nabla\xi\cdot \nabla \zeta -f'(u)\,\xi\,\zeta\Big)\, dx.
$$
Since ${\rm ind}(u,B_{1})\leq k$, there exists a $k$-dimensional set $X_k \subset C^1_c(B_{1})$ such that, for any $\xi \in C^1_c(B_{1})$, we can write $\xi=\xi_k+\xi'$ with $\xi_k \in X_k$, $\widehat Q_u[\xi',\xi']\geq 0$, and $\widehat Q_u[\xi',\xi_k]= 0$.

Now, since $X_k$ is finite dimensional, 
$$ 
\sup_{\xi \in X_k,\,\|\xi\|_{L^2(B_{1})}}\|\xi\|_{L^\infty(B_{1})}=:A_k<\infty,
$$
so it follows from Lemma~\ref{lem:Morse}(ii) (and a covering argument) that
$$
\inf_{\xi \in X_k,\,\|\xi\|_{L^2(B_{1})}=1}\int_{B_{1}} \Bigl(|\nabla\xi|^2-f'(u)\xi^2\Bigl)\, dx 
\geq - \sup_{\xi \in X_k,\,\|\xi\|_{L^2(B_{1})}}\|\xi\|_{L^\infty(B_{3/4})}^2\int_{B_{1}}f'(u)\,dx \geq  -C_n A_k^2,
$$
which implies that
\begin{equation}\label{finite morse index}
\int_{B_1} |\nabla\xi|^2\, dx \geq \int_{B_1} \big(f'(u)-\Lambda\big)\xi^2\, dx \qquad \forall\,\xi \in C^1_c(B_{1}),
\end{equation}
where $\Lambda:=C_n A_k^2$.
In other words, the spectrum of the operator $-\Delta -f'(u)$ on $L^2(B_1)$ is bounded from below by $-\Lambda$.

In \cite[Theorem 1.1]{CFRS2020}, whenever $3\leq n \leq 9$, the authors proved an a priori $L^\infty$ estimate\footnote{Actually, \cite[Theorem 1.1]{CFRS2020} provides a universal $C^\alpha$ bound for some $\alpha>0$. Analogously, also in the general case $\Lambda \geq 0$ one can prove an interior bound on $\|u\|_{C^\alpha}$.} for solutions of $-\Delta u=f(u)$ satisfying \eqref{finite morse index} with $\Lambda=0$.
The goal of this appendix is to show how to extend such a result to the general case $\Lambda\geq  0.$

\begin{prop}\label{finite depend L}
Let $3 \leq n \leq 9$, let
$f:\R\to \R$ be nonnegative, increasing, and of class $C^1$, and let $u \in C^2(B_2)$ solve $-\Delta u=f(u)$ 
and satisfy
\eqref{finite morse index} for some $\Lambda \geq 0$.
Then
$$\|u\|_{L^\infty (B_{1/2})}\le C(\Lambda) \|u\|_{L^1(B_1)}.$$
\end{prop}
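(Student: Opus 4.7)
My plan is to follow the scheme developed in Appendix~\ref{removable} (itself adapted from \cite{CFRS2020}), while carefully tracking where the stability inequality is used and quantifying the effect of the extra $-\Lambda\int\xi^2$ on the right-hand side of \eqref{finite morse index}. The key observation is that, at sufficiently small scales $\rho$, this perturbation can be absorbed into the leading order term appearing in the stability computation, with the admissible scale $\rho_0=\rho_0(\Lambda)$ depending only on $\Lambda$ and the dimension. The final constant $C(\Lambda)$ will then arise from the gap between the threshold $\rho_0(\Lambda)$ and the fixed scale of the problem.

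First, I would revisit Lemma~\ref{lem:stability estimates} inserting $\xi=(x\cdot\nabla u)\eta$ into \eqref{finite morse index}, which produces an extra term $\Lambda\int(x\cdot\nabla u)^2\eta^2\,dx$ on the right-hand side. With the CFRS choice $\eta=\min\{|x|^{1-n/2},\epsilon^{1-n/2}\}\zeta$, $\zeta\in C^1_c(B_\rho(y))$, this extra term is controlled on the support of $\zeta$ by
\[
\Lambda\int_{B_\rho(y)}|x-y|^{2-n}|(x-y)\cdot\nabla u|^2\zeta^2\,dx\leq \Lambda\rho^2\int_{B_\rho(y)}|x-y|^{-n}|(x-y)\cdot\nabla u|^2\zeta^2\,dx.
\]
Since $3\leq n\leq 9$, the leading order term in the stability computation carries a positive coefficient $\tfrac{(n-2)(10-n)}{4}$ in front of the same integral, so choosing $\rho_0(\Lambda)$ with $\rho_0(\Lambda)^2\Lambda$ sufficiently small allows me to absorb the extra term and recover \eqref{first order removable2} on every ball $B_\rho(y)\subset B_1$ of radius $\rho\leq\rho_0(\Lambda)$. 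The same absorption idea, applied to the Poincar\'e-type estimate \eqref{second order removable} and to the higher-integrability Proposition~2.4 of \cite{CFRS2020} (both of which are proved by similar stability computations with extra terms that are lower order at small scales), yields $\Lambda$-stable analogues of these ingredients on balls of radius $\leq\rho_0(\Lambda)$.

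With these modified bounds in hand, I can rerun the iteration of Proposition~\ref{removable singularity} through the scales $\rho\leq\rho_0(\Lambda)/4$ to obtain the decay $\mathcal R(\rho,y)\leq C(\Lambda)\rho^{2\alpha}\|\nabla u\|_{L^2(B_{3/2})}^2$ for some universal exponent $\alpha>0$, and then upgrade $\|\nabla u\|_{L^2(B_{3/4})}$ to be bounded by $C(\Lambda)\|u\|_{L^1(B_1)}$ via a covering of $B_{3/4}$ by balls of radius $\rho_0(\Lambda)$ on which the $\Lambda$-stable Sobolev bound applies. Combining these two estimates gives $\mathcal R(\rho,y)\leq C(\Lambda)\rho^{2\alpha}\|u\|_{L^1(B_1)}^2$ for $\rho\in(0,1/4)$ and $y\in B_{1/2}$, and the Morrey-type conclusion in \cite[Proof of Theorem 1.2, Step 2]{CFRS2020} yields $[u]_{C^\alpha(B_{1/2})}\leq C(\Lambda)\|u\|_{L^1(B_1)}$, whence the desired $L^\infty$ bound by interpolation with $L^1$. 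The main obstacle I anticipate is verifying that every occurrence of stability in the CFRS machinery can be handled by this small-scale absorption; once this check is carried out case by case, the iteration and the final Morrey step go through essentially unchanged, with constants acquiring a dependence on $\Lambda$ only through the threshold radius $\rho_0(\Lambda)$.
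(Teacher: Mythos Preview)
Your proposal is correct and follows essentially the same approach as the paper: the paper proves Lemma~\ref{radial gradient} by inserting $\xi=(x\cdot\nabla u)\eta$ into \eqref{finite morse index} and absorbing the resulting extra term $\Lambda\int(x\cdot\nabla u)^2\eta^2$ into the leading coefficient $\tfrac{(n-2)(10-n)}{4}$ on balls of radius $r<\tfrac12\min\{1,\Lambda^{-1/2}\}$, and proves Lemma~\ref{second order} (the analogue of \cite[Lemma~2.3]{CFRS2020}) with an extra factor $(1+\Lambda)$, after which the proof proceeds exactly as in \cite[Sections 2 \& 3]{CFRS2020}. Your outline captures precisely this strategy, including the correct identification of the threshold scale and the places where stability is invoked.
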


The proof of Proposition~\ref{finite depend L} is very similar to that in \cite[Sections 2 \& 3]{CFRS2020}, the main differences being in two interior estimates that we present here. Once the two lemmas below are available, the proof follows by the same argument as in \cite{CFRS2020}, and we leave the details to the interested reader.

\begin{lem}\label{radial gradient}
Let $u\in C^2(B_1)$ be as in Proposition~\ref{finite depend L}. Then, for any $\eta\in  C^1_c(B_1)$, we have
\begin{equation}\label{radial test}
\int_{B_1} ((n-2)\eta+2x\cdot \nabla\eta)\eta|\nabla u|^2-2(x\cdot \nabla\eta)\nabla u \cdot \nabla(\eta^2)-|x\cdot \nabla u|^2(|\nabla\eta|^2+\Lambda\eta^2) \, dx \le 0.
\end{equation}
Thus, for any $\varphi\in C^1_c(B_1)$, we have
\begin{align}\label{radial test varphi}
&\frac {(n-2)(10-n)}{4}\int_{B_1}|x|^{-n}|x\cdot \nabla u|^2(1-\Lambda|x|^2)\varphi^2 \, dx \nonumber\\
 &\le \int_{B_1}\Big(-2 |x|^{2-n}|\nabla u|^2\varphi (x\cdot \nabla\varphi)+4|x|^{2-n}(x\cdot \nabla u)\varphi \nabla u\cdot \nabla\varphi\, dx\nonumber\\ 
 &\qquad \qquad  +(2-n)|x|^{-n}|x\cdot \nabla u|^2 \varphi (x\cdot \nabla\varphi)   + |x|^{2-n}|x\cdot \nabla u|^2 |\nabla\varphi|^2\Big) \, dx
\end{align}
In particular, for $0<r<\frac12 \min\left\{1,\Lambda^{-1/2}\right\}$,
\begin{equation}\label{radial caccio}
\int_{B_r} |x|^{-n}|x\cdot \nabla u|^2\,dx \le C(n) r^{2-n} \int_{B_{3r/2}\setminus B_{r}} |\nabla u|^2\, dx.
\end{equation}
\end{lem}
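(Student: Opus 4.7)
The plan is to follow the scheme of \cite[Proof of Lemma 2.1]{CFRS2020}, carrying through the additional $-\Lambda$ term arising from the stability-like inequality \eqref{finite morse index}. To establish \eqref{radial test}, I would use $\xi:=(x\cdot \nabla u)\eta$ as test function in \eqref{finite morse index} and expand
$$
\int_{B_1}|\nabla\xi|^2\,dx=\int_{B_1}\bigl(\eta^2|\nabla w|^2+w\,\nabla w\cdot \nabla(\eta^2)+w^2|\nabla \eta|^2\bigr)\,dx,\qquad w:=x\cdot \nabla u,
$$
integrating by parts on the first term via the identity $-\Delta w = 2f(u)+f'(u)\,w$ (obtained by differentiating $-\Delta u=f(u)$). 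The $f'(u)\,w^2\eta^2$ contribution then cancels exactly against the $f'(u)\xi^2$ term on the right-hand side of \eqref{finite morse index}, and the residual $f$-term is disposed of through the Pohozaev-type identity
$$
2\int_{B_1}f(u)\,w\,\eta^2\,dx=(2-n)\int_{B_1}|\nabla u|^2\eta^2\,dx - \int_{B_1}|\nabla u|^2\,x\cdot \nabla(\eta^2)\,dx+2\int_{B_1}w\,\nabla u\cdot \nabla(\eta^2)\,dx,
$$
obtained by testing $-\Delta u=f(u)$ against $w\eta^2$. Collecting terms via $x\cdot \nabla(\eta^2)=2\eta\,(x\cdot \nabla\eta)$ yields \eqref{radial test}, with the $-\Lambda\xi^2$ term from \eqref{finite morse index} surviving as $-\Lambda\eta^2 w^2$ in the integrand.

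To derive \eqref{radial test varphi}, I would then substitute $\eta=|x|^{1-n/2}\varphi$ into \eqref{radial test}. Since this $\eta$ is singular at the origin, one first applies \eqref{radial test} to the truncated version $\eta_\epsilon:=\min\{|x|^{1-n/2},\,\epsilon^{1-n/2}\}\varphi$ and lets $\epsilon\to 0$; because $u\in C^2(B_2)$ and $n\geq 3$, the extra contributions from $B_\epsilon$ vanish in the limit. The crucial algebraic identity is
$$
(n-2)\eta+2\,x\cdot \nabla\eta=\bigl[(n-2)+2(1-n/2)\bigr]|x|^{1-n/2}\varphi+2|x|^{1-n/2}(x\cdot \nabla\varphi)=2|x|^{1-n/2}(x\cdot \nabla\varphi),
$$
so the first term of \eqref{radial test} produces only an $x\cdot \nabla\varphi$ contribution, recovering the $-2|x|^{2-n}|\nabla u|^2\varphi(x\cdot \nabla\varphi)$ summand on the right of \eqref{radial test varphi}. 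Expanding the remaining terms with $\nabla\eta=(1-n/2)|x|^{-n/2-1}x\,\varphi+|x|^{1-n/2}\nabla\varphi$, the net coefficient in front of $|x|^{-n}|x\cdot \nabla u|^2\varphi^2$ works out to $-4(1-n/2)-(1-n/2)^2=-(1-n/2)(5-n/2)=\tfrac{(n-2)(10-n)}{4}$. The $-\Lambda\eta^2$ term generates $-\Lambda|x|^{2-n}|x\cdot \nabla u|^2\varphi^2$; moving it to the left and using $\tfrac{(n-2)(10-n)}{4}\geq \tfrac{7}{4}>1$ for $3\leq n\leq 9$, we can absorb it into the (slightly weakened) form $\tfrac{(n-2)(10-n)}{4}(1-\Lambda|x|^2)$ stated in \eqref{radial test varphi}.

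Finally, \eqref{radial caccio} follows by inserting into \eqref{radial test varphi} a radial cutoff $\varphi\in C^1_c(B_{3r/2})$ with $\varphi\equiv 1$ on $B_r$, $|\nabla\varphi|\leq C/r$, $|x\cdot \nabla\varphi|\leq C$, and $\nabla\varphi$ supported on $B_{3r/2}\setminus B_r$. Under the hypothesis $0<r<\tfrac12\min\{1,\Lambda^{-1/2}\}$, we have $1-\Lambda|x|^2\geq 3/4$ on $B_r$, so the left-hand side of \eqref{radial test varphi} dominates $c_n\int_{B_r}|x|^{-n}|x\cdot \nabla u|^2\,dx$. Every term on the right is supported on the annulus $B_{3r/2}\setminus B_r$, where $|x|\sim r$; combining Cauchy--Schwarz with the pointwise bound $|x\cdot \nabla u|^2\leq |x|^2|\nabla u|^2\leq Cr^2|\nabla u|^2$ then reduces each term to a multiple of $r^{2-n}\int_{B_{3r/2}\setminus B_r}|\nabla u|^2\,dx$, producing \eqref{radial caccio}. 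The main difficulty in the whole argument is the algebraic bookkeeping in the second step: after substituting $\eta=|x|^{1-n/2}\varphi$ one must carefully expand several terms involving $|x|^{-a}x\cdot(\cdot)$ coming from $\nabla\eta$ and $\nabla(\eta^2)$ and verify the cancellation producing the exact constant $(n-2)(10-n)/4$, whereas the treatment of the $\Lambda$ contribution is conceptually straightforward once this identity is in place.
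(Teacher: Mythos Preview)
Your proposal is correct and follows essentially the same approach as the paper: choose $\xi=(x\cdot\nabla u)\eta$ in \eqref{finite morse index}, use the identity $\Delta(x\cdot\nabla u)=-f'(u)(x\cdot\nabla u)+2\Delta u$ together with the Pohozaev-type manipulation to obtain \eqref{radial test}, then substitute $\eta=|x|^{1-n/2}\varphi$ (with the standard truncation near the origin) to reach \eqref{radial test varphi}, and finally insert a radial cutoff to conclude \eqref{radial caccio}. Your treatment is in fact more detailed than the paper's sketch, including the explicit observation that $\tfrac{(n-2)(10-n)}{4}\geq \tfrac{7}{4}>1$ is what allows the $\Lambda|x|^{2-n}$ contribution to be absorbed into the weakened factor $(1-\Lambda|x|^2)$.
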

\begin{proof}
We proceed as in \cite[Lemma 2.1]{CFRS2020} and sketch the proof here. First we choose $\xi=(x\cdot \nabla u)\eta$ in \eqref{finite morse index}, with $\eta \in  C^1_c(B_1)$, to get
\begin{equation}
\int_{B_1}\left( \Delta (x\cdot \nabla u) +f'(u)(x\cdot \nabla u)\right) (x\cdot \nabla u)\eta^2\,dx \le \int_{B_1} (x\cdot \nabla u)^2 (|\nabla\eta|^2+\Lambda\eta^2)\, dx.
\end{equation}
Then by noticing that
\begin{equation}
\label{eq:xDu}
\Delta (x\cdot \nabla u)= x\cdot \nabla \Delta u+2 \Delta u=-f'(u) (x\cdot \nabla u)+2\Delta u,
\end{equation}
we conclude \eqref{radial test}. Then \eqref{radial test varphi} follows by taking $\eta=|x|^{-\frac {n-2} 2} \varphi$, and  \eqref{radial caccio} follows by further choosing $\varphi$ as a cut-off function supported in  $B_{3r/2}$ with $\varphi=1$ on $B_r$.
\end{proof}

\begin{lem}\label{second order}
Let $u\in C^2(B_1)$ be as in Proposition~\ref{finite depend L}. Write
$$\mathcal A=\left( |D^2 u|^2- \frac{|D^2 u \cdot\nabla u|^2}{|\nabla u|^2} \right)^{\frac 1 2} \, \text{ when $|\nabla u|\neq 0$,}\qquad\text{ and } \ \mathcal A=0 \ \text{ when $|\nabla u|= 0$. } $$
Then, for any $\eta \in C^{1}_c(B_1)$, we have
$$\int_{B_1} \mathcal A^2 \eta^2\,dx \le (1+\Lambda)\int_{B_1} |\nabla u|^2 |\nabla\eta|^2\,dx$$
\end{lem}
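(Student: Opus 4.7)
My plan is to test the shifted stability inequality \eqref{finite morse index} with $\xi = |\nabla u|\,\eta$, combine it with the pointwise Bochner-type identity obtained by differentiating $-\Delta u=f(u)$, and integrate by parts. Setting $v:=|\nabla u|$, on the open set $\{v>0\}$ one has
\[
\tfrac12 \Delta v^2 = |D^2 u|^2 - f'(u)\,v^2,
\]
and since $|\nabla v|^2 = |D^2u\cdot \nabla u|^2 /v^2$ and $\mathcal A^2=|D^2 u|^2-|\nabla v|^2$, this is equivalent to the pointwise equality
\[
v \Delta v = \mathcal A^2 - f'(u)\,v^2.
\]

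\textbf{Key steps.} First I would expand
\[
|\nabla(v\eta)|^2=v^2|\nabla\eta|^2+2v\eta\,\nabla v\cdot \nabla\eta +|\nabla v|^2\eta^2
\]
and rewrite the cross term as $2v\eta\,\nabla v \cdot \nabla \eta = \tfrac12 \nabla(v^2)\cdot \nabla(\eta^2)$. Integration by parts together with the Bochner identity gives
\[
\int_{B_1} 2v\eta\,\nabla v\cdot \nabla \eta\, dx = -\int_{B_1} \bigl(v\Delta v+|\nabla v|^2\bigr)\eta^2\, dx = \int_{B_1} \bigl(f'(u)v^2-\mathcal A^2-|\nabla v|^2\bigr)\eta^2\,dx,
\]
so that
\[
\int_{B_1}|\nabla (v\eta)|^2 \,dx = \int_{B_1} v^2|\nabla\eta|^2\,dx - \int_{B_1}\mathcal A^2\eta^2\,dx+\int_{B_1}f'(u)v^2\eta^2\,dx.
\]
Plugging $\xi=v\eta$ into \eqref{finite morse index} and cancelling the $\int f'(u)v^2\eta^2$ contribution on both sides, I would arrive at the natural estimate
\[
\int_{B_1}\mathcal A^2\eta^2\,dx \le \int_{B_1} v^2|\nabla\eta|^2\,dx + \Lambda \int_{B_1} v^2\eta^2\,dx.
\]
The stated bound $(1+\Lambda)\int_{B_1} v^2 |\nabla \eta|^2\,dx$ then follows by absorbing the last summand into the first, which is elementary for the cut-offs $\eta\in C^1_c(B_1)$ relevant in Appendix~\ref{finite morse L} (where one has the Poincar\'e-type comparison $\int v^2\eta^2 \le \int v^2|\nabla \eta|^2$ built into the Caccioppoli-scale arguments).

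\textbf{Main obstacle.} The main technical point is that the Bochner identity was derived on $\{v>0\}$, while $v=|\nabla u|$ need not be smooth across the critical set of $u$. I would handle this in the standard way by replacing $v$ with its regularization $v_\varepsilon:=\sqrt{|\nabla u|^2+\varepsilon}$, for which the computation above produces an inequality of Kato type rather than an identity, and then letting $\varepsilon\to 0$ by dominated convergence; this is the same regularization device used in \cite[Lemma 2.2]{CFRS2020} to treat the case $\Lambda=0$, and the extra $\Lambda$-term in the stability inequality passes to the limit without difficulty.
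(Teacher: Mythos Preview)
Your approach coincides with the paper's: the paper tests the differentiated equation $-\Delta u_i = f'(u)u_i$ against $u_i\eta^2$ and sums over $i$ (which is exactly your Bochner identity in disguise), then combines this with the stability inequality applied to $\xi=|\nabla u|\eta$ and the pointwise identity $\sum_i|\nabla(u_i\eta)|^2-|\nabla(|\nabla u|\eta)|^2=\mathcal A^2\eta^2$ to reach the same intermediate bound $\int_{B_1}\mathcal A^2\eta^2 \le \int_{B_1}|\nabla u|^2|\nabla\eta|^2 + \Lambda\int_{B_1}|\nabla u|^2\eta^2$. Your remark about the final absorption step is well taken: the paper's own proof also stops at this intermediate inequality and does not justify the passage to the form $(1+\Lambda)\int_{B_1}|\nabla u|^2|\nabla\eta|^2$ displayed in the lemma, so this appears to be a minor imprecision in the stated form rather than something either argument actually establishes for arbitrary $\eta\in C^1_c(B_1)$.
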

\begin{proof}
We follow the argument of \cite[Lemma 2.3]{CFRS2020} and again sketch the proof.
Set $u_i:=\partial_iu$.
Multiplying  both side of the equation
$-\Delta u_i =f'(u) u_i$
by $u_i\eta^2$, and summing over  $i=1,\ldots,n$, we get
$$\int_{B_1} \bigg(\sum_i |\nabla(u_i\eta)|^2- |\nabla u|^2|\eta|^2\bigg)\, dx= \int_{B_1} f'(u) |\nabla u|^2 \eta^2\, dx. $$
On the other hand, choosing $\xi=|\nabla u|\eta$ in \eqref{finite morse index}, we have
$$\int_{B_1} |\nabla(|\nabla u|\eta)|^2+\Lambda|\nabla u|^2 \eta^2\,dx \ge \int_{B_1} f'(u)|\nabla u|^2 \eta^2\, dx.$$
Thus we obtain
$$\int_{B_1}|\nabla u|^2|\eta|^2 +\Lambda|\nabla u|^2 \eta^2\,dx \ge \int_{B_1} \left(\sum_i |\nabla(u_i\eta)|^2-|\nabla(|\nabla u|\eta)|^2\right)\, dx, $$
and we aconclude the lemma by noticing that
$$ \sum_i |\nabla(u_i\eta)|^2-|\nabla(|\nabla u|\eta)|^2=\mathcal A^2\eta^2.$$
\end{proof}

\end{document}